\newtheorem{thm}{Theorem}[section]
\newtheorem{lem}[thm]{Lemma}
\newtheorem{prop}[thm]{Proposition}
\newtheorem{defn}[thm]{Definition}
\newtheorem{cor}[thm]{Corollary}
\newtheorem{rem}[thm]{\textit{\textrm{Remark}}}
\newtheorem{notation}[thm]{Notation}
\newtheorem{ex}[thm]{\textit{\textrm{Example}}}
\newtheorem{note}{\textit{\textrm{Note}}}
\numberwithin{equation}{section}
\numberwithin{equation}{section}
\newcommand{\NI}{\noindent}
\newcommand\HUGE{\@setfontsize\Huge{38}{47}}
\begin{document}
\title[Eigenvalues of Gram Matrices of a Class of Diagram Algebras] {Eigenvalues of Gram Matrices of a Class of Diagram Algebras}

\  \author{N.\,Karimilla Bi}  \author{M.\,Parvathi$^\dag$}
 \maketitle{\small{

\begin{center}
Ramanujan Institute for Advanced Study in Mathematics, \,\\
University  of  Madras,  \\
Chepauk, Chennai -600 005, Tamilnadu, India.\\
{\bf {$^\dag$ E-Mail: sparvathi@hotmail.com}}
\end{center}}

\begin{abstract}
In this paper, we introduce symmetric diagram matrices $A^{s+r, s}$ of size ${_{(s+r)}}C_s$ whose entries are $\{x_i\}_{1  \leq  i  \leq  \text{min}\{s, r\}}$. We compute the eigenvalues of symmetric diagram matrices using elementary row and column operations inductively. As a byproduct, we obtain the eigenvalues of Gram matrices of a larger class of diagram algebras like the  signed partition algebras,  algebra of $\mathbb{Z}_2$ relations and partition algebras.
\end{abstract}

\quad\quad \textbf{keywords:} Eigenvalues, Symmetric diagram matrices,  Gram matrices, partition algebras, 

\quad\quad signed partition algebras and algebra of $\mathbb{Z}_2$-relations.

\quad\quad \textbf{Mathematics Subject Classification(2010).} 15B99, 16Z05.

\section{\textbf{Introduction}}

The study of the structure of finite dimensional
algebras has gained importance in recent times for it may be possible to find presumably new
examples of subfactors of a hyper finite $\Pi_1$-factor along the
lines of \cite{W}.

The partition algebras are introduced by V. Jones  in \cite{J} and Martin in \cite{PM2} and they are studied intensively by Martin in \cite{PH, PM1, PM2, PM3, PM4, PM5}. The cellularity of many subalgebras of partition algebras like Tanabe algebras, Party algebras, algebra of $\mathbb{Z}_2$-relations and signed partition algebras are studied in detail in \cite{T}, \cite{KS}, \cite{VSS} and \cite{K, SP} respectively.

The question of semisimplicity of these algebras reduces to the nondegeneracy of Gram matrices of such algebras. In this connection, the Gram matrices of partition algebras, algebra of $\mathbb{Z}_2$-relations and signed partition algebras are studied in \cite{KP} which are realized as direct sum of block sub matrices. The diagonal entry of the block submatrix is a product of $r_1$ quadratic polynomials and $r_2$ linear polynomials in the case of algebra of $\mathbb{Z}_2$-relations and signed partition algebras and it is a product $r$ linear polynomials in the case of partition algebras.

 In this paper, we introduce symmetric diagram matrices $A^{s+r, s}$ of size ${_{(s+r)}}C_s$ whose entries are $\{x_i\}_{1  \leq  i  \leq  \text{min}\{s, r\}}$ based on the diagrams. We compute the eigenvalues of symmetric diagram matrices using elementary row and column operations by induction.

  As a byproduct, we compute the eigenvalues of Gram matrices of a larger class of diagram algebras like the signed partition algebras, partition algebras and  algebra of $\mathbb{Z}_2$ relations where the Gram matrix of any diagram algebra is a direct product of symmetric diagram matrices.
\section{\textbf{Preliminaries}}

\subsection{\textbf{Partition Algebras}}
\textbf{\\}
We recall the definitions in \cite{HA} required in this paper. Let $X$ be any set and let $R_X$ denote the set of all equivalence relations on $X.$ For $k \in \mathbb{N},$ let $\underline{k} = \{1,2, \cdots, k\}, \underline{k}' = \{1', 2', \cdots, k'\}.$ Let $X = \underline{k} \cup \underline{k}'$ and $R_{k \cup k'}$ be the set of all partitions  of $\{\underline{k} \cup \underline{k}'\}$ or equivalence relation on $\underline{k} \cup \underline{k}'.$ For $d \in R_{k \cup k'},$ the elements of $d$ are called as connected components.

Any $d \in R_{k \cup k'}$ can be represented as a simple graph on two rows of $k$-vertices, one above the other with $k$ vertices in the top row labeled $1,2,\cdots, k$ left to right and $k$ vertices in the bottom row labeled $1', 2', \cdots, k'$ left to right  with vertex $i$ and vertex $j$ connected by a path if $i$ and $j$ are in the same block of the set partition $d.$ The graph representing $d$ is called $k$-partition diagram and it is not unique. Two $k$-partition diagrams are said to be equivalent if they give rise to the same set partition of $2k$-vertices.

A connected component of $d$  containing an element of $\{1, 2, \cdots, k\}$ and element of $\{1', 2', \cdots, k'\}$ is  called  {\it through class} and a connected component containing an element of either $\{1, 2, \cdots, k\}$ or $\{1', 2', \cdots, k'\}$ is called {\it horizontal edge}.

\NI Number of through classes in $d$ is called {\it propagating number} and it is denoted by $\sharp^p(d).$

We shall define multiplication of two $k$-partition diagrams $d'$ and $d''$ as follows:

\begin{itemize}
  \item Place $d'$ above $d''$ and identify the bottom dots of $d'$ with the top dots of $d''.$
  \item $d' \circ d''$ is the resultant diagram obtained by using only the top row of $d'$ and bottom row of $d'',$ replace each connected component which lives entirely in the middle row by the variable $x$. i.e., $d' \circ d'' = x^{\lambda} d'''$ where $\lambda$ is the number of connected components that lie entirely in the middle row.
\end{itemize}

\NI This product is associative and is independent of the graph we choose to represent the $k$-partition diagram.

Let $\mathbb{C}(x)$ be the field of rational functions with complex coefficients in the variable $x.$ The partition algebra $A_k(x)$ is defined to be the $\mathbb{C}(x)$-span of the $k$-partition diagrams, which is an associative algebra with identity $1$ where

\centerline{\includegraphics[height=1.5cm, width=4cm]{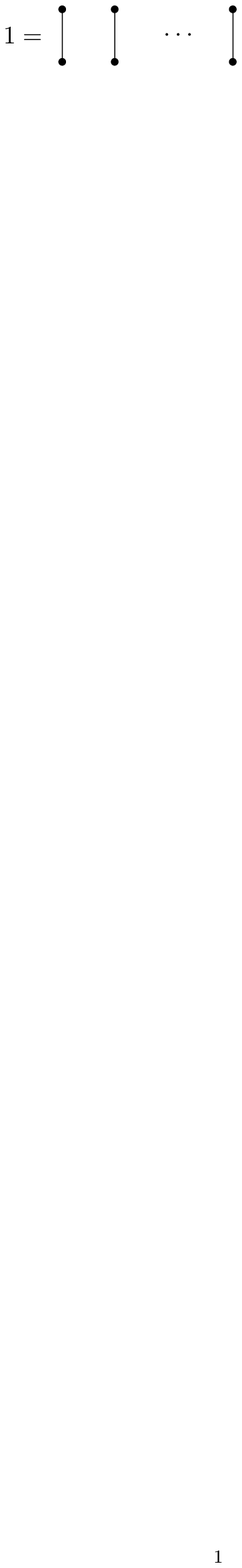}}

By convention $A_0(x) = \mathbb{C}(x).$

For $1 \leq i \leq k-1$ and $1 \leq j \leq k,$ the following are the generators of the partition algebras.
\begin{center}
\includegraphics[height=3.5cm, width=5cm]{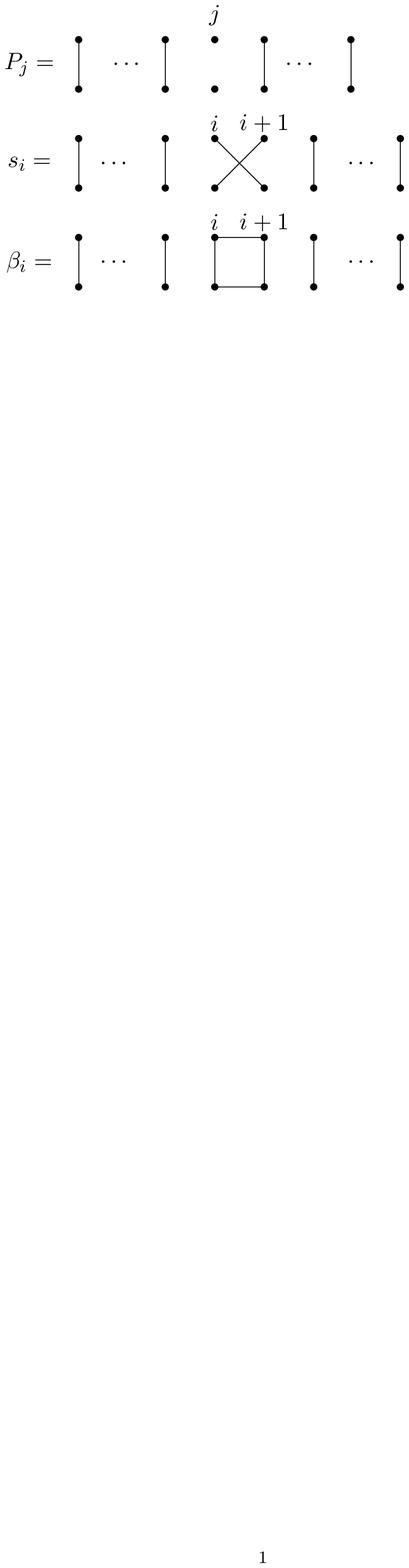}

\end{center}

\NI The above generators satisfy the relations given in Theorem 1.11 of \cite{HA}.

\subsection{The algebra of $\mathbb{Z}_2$-relations:}
\begin{defn}\label{D2.1} (\cite{VSS})
Let the group $\mathbb{Z}_2$ act on the set $X.$ Then the action of $\mathbb{Z}_2$ on $X$ can be extended to an action of $\mathbb{Z}_2$ on $R_X,$ where $R_X$ denote the set of all equivalence relations on $X$, given by

\centerline{$g.d = \{(gp, gq) \ | \ (p, q) \in d\}$}
\NI where $d \in R_X$ and $g \in \mathbb{Z}_2.$ (It is easy to see that the relation $g.d$ is again an equivalence relation).

An equivalence relation $d$ on $X$ is said to be a $\mathbb{Z}_2$-stable equivalence relation if $p \sim q$ in $d$ implies that $gp \sim gq$ in $d$ for all $g$ in $\mathbb{Z}_2.$ We denote $\underline{k}$ for the set $\{1, 2, \cdots, k\}.$ We shall only consider the case when $\mathbb{Z}_2$ acts freely on $X$; let $X = \underline{k} \times \mathbb{Z}_2$ and the action is defined by $g.(i, x) = (i, gx)$ for all $1 \leq i \leq k.$

Let $R_k^{\mathbb{Z}_2}$ be the set of all $\mathbb{Z}_2$-stable equivalence relations on $X.$
\end{defn}

\begin{notation}\label{N2.2}(\cite{VSS})
$R_k^{\mathbb{Z}_2}$ denotes the set of all $\mathbb{Z}_2$-stable equivalence relation on $\{1,2, \cdots k\} \times \mathbb{Z}_2.$

Each $d \in R_k^{\mathbb{Z}_2}$ can be represented as a simple graph on a row of $2k$ vertices.
\begin{itemize}
  \item The vertices $(1, e), (1, g), \cdots, (k, e), (k, g)$ are arranged from left to right in a single row.
  \item If $(i, g) \sim (j, g') \in R_k^{\mathbb{Z}_2}$ then $(i, g), (j, g')$ is joined by a line for all $g, g' \in \mathbb{Z}_2.$
\end{itemize}
We say that the two graphs are equivalent if they give rise to the same set partition of the $2k$ vertices $\{(1, e), (1, g), \cdots, (k, e), (k, g)\}.$

\NI We may regard each element $d$ in $R_{k \cup k'}^{\mathbb{Z}_2}$ as a $2k$-partition diagram by arranging the $4k$ vertices $(i, g), i \in \underline{k} \cup \underline{k}', g \in \mathbb{Z}_2$ of $d$ in two rows in such a way that $(i, g)\left( (i', g)\right)$ is in the top(bottom) row of $d$ if $1 \leq i \leq k(1' \leq i' \leq k')$ for all $g \in \mathbb{Z}_2$  and if $(i, g) \sim (j, g')$ then $(i, g), (j, g')$ is joined by a line for all $g, g' \in \mathbb{Z}_2.$

The diagrams $d^+$ and $d^-$ are obtained from the diagram $d$ by restricting the vertex set to $\{(1, e), (1, g), \\ \cdots, (k, e), (k, g)\}$ and $\{(1', e), (1', g), \cdots (k', e), (k', g)\}$ respectively. The diagrams $d^+$ and $d^-$ are also $\mathbb{Z}_2$-stable equivalence relation and $d^+, d^- \in R_k^{\mathbb{Z}_2}.$
\end{notation}

\begin{defn}\label{D2.3}(\cite{VSS})
Let $d \in R_{k \cup k'}^{\mathbb{Z}_2}.$ Then the equation

\centerline{$R^d = \{(i, j) \ | \ \text{ there exists } g, h \in \mathbb{Z}_2 \text{ such that } ((i, g), (i, h)) \in d\}$}
\NI defines an equivalence relation on $\underline{k} \cup \underline{k}'.$
\end{defn}

\begin{rem}\label{R2.4} (\cite{VSS})
 For $d \in R_{k \cup k'}^{\mathbb{Z}_2}$ and for every $\mathbb{Z}_2$-stable equivalence class or a connected component $C$ in $R^d$ there exists a unique subgroup denoted by $H_C^d$ where
\begin{enumerate}
  \item[(i)]  $H_C^d = \{e\}$ if $(i, e) \not \sim (i, g) \ \forall \ i \in C, C$ is called an $\{e\}$-class or $\{e\}$-component and the $\{e\}$-component $C$ will always occur as a pair and
  \item[(ii)] $H_C^d =\mathbb{Z}_2$ if $(i, e) \sim (i, g) \ \forall \ i \in C, C$ is called a $\mathbb{Z}_2$-class or $\mathbb{Z}_2$-component and the number of vertices in the $\mathbb{Z}_2$-component $C$ will always be even.
\end{enumerate}

\end{rem}

\begin{prop}\label{P2.5} (\cite{VSS})
The linear span of $R_{k \cup k'}^{\mathbb{Z}_2}$ is a subalgebra of $A_{2k}(x)$. We denote this subalgebra by $A_k^{\mathbb{Z}_2}(x),$ called the algebra of $\mathbb{Z}_2$-relations.
\end{prop}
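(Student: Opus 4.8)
The plan is to verify the two defining closure properties of a subalgebra: that the identity $1$ of $A_{2k}(x)$ lies in the linear span of $R_{k\cup k'}^{\mathbb{Z}_2}$, and that the product in $A_{2k}(x)$ of two basis diagrams coming from $R_{k\cup k'}^{\mathbb{Z}_2}$ is again a scalar multiple of such a diagram. Since the multiplication of $A_{2k}(x)$ is $\mathbb{C}(x)$-bilinear and the span is a $\mathbb{C}(x)$-subspace, closure on the diagram basis immediately gives closure of the whole span, so these two checks suffice.

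For the identity: the identity $2k$-partition diagram has connected components $\{(i,e),(i',e)\}$ and $\{(i,g),(i',g)\}$ for $1\le i\le k$. The nontrivial element $g\in\mathbb{Z}_2$ acts on vertices by $(i,x)\mapsto(i,gx)$, and this interchanges the two blocks above, leaving the set partition unchanged. Hence $1$ is a $\mathbb{Z}_2$-stable equivalence relation and belongs to the span.

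For products: let $d',d''\in R_{k\cup k'}^{\mathbb{Z}_2}$ and write $d'\circ d'' = x^{\lambda}\,d'''$ as in the definition of multiplication. The vertex set of the stacked graph is the top row of $d'$, the middle row (bottom of $d'$ identified with top of $d''$), and the bottom row of $d''$; together with the identification used to regard $d',d''$ as elements of $R_{k\cup k'}^{\mathbb{Z}_2}$, the $\mathbb{Z}_2$-action $(i,x)\mapsto(i,gx)$ acts on each of these three rows separately and, on each, is exactly the action under which $d'$ and $d''$ are stable. Now $(p,q)\in d'''$ iff there is a path from $p$ to $q$ in the stacked graph alternating edges of $d'$ and edges of $d''$; applying $g$ vertex-by-vertex along such a path sends $d'$-edges to $d'$-edges and $d''$-edges to $d''$-edges by $\mathbb{Z}_2$-stability of $d'$ and $d''$, yielding a path from $gp$ to $gq$. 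Hence $(gp,gq)\in d'''$, so $d'''$ is $\mathbb{Z}_2$-stable, i.e. $d'''\in R_{k\cup k'}^{\mathbb{Z}_2}$. (The exponent $\lambda$ is also $g$-invariant, since $g$ permutes the middle-row components, but this is not needed.) Thus $d'\circ d''$ lies in the span.

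Equivalently, one may package this as: the vertex permutation $(i,x)\mapsto(i,gx)$ induces an order-two algebra automorphism $\sigma$ of $A_{2k}(x)$ --- it permutes the diagram basis, fixes $1$, and commutes with stacking because it acts columnwise and so respects the top/middle/bottom decomposition and the middle-component count --- and $R_{k\cup k'}^{\mathbb{Z}_2}$ is precisely the set of $\sigma$-fixed basis diagrams; since the product of two basis diagrams is always $x^{\lambda}$ times a single basis diagram, $\sigma$-invariance of the factors forces $\sigma$-invariance of that diagram. The only step needing genuine care --- the ``hard part'', such as it is --- is the equivariance of the path-and-component bookkeeping in the stacked diagram under the $\mathbb{Z}_2$-action, i.e. that $\sigma$ really is an algebra automorphism; granting that, the proposition follows at once.
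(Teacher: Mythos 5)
Your argument is correct and is essentially the standard proof: the vertex permutation $(i,x)\mapsto(i,gx)$ induces an algebra automorphism of $A_{2k}(x)$ permuting the diagram basis, and since the product of two basis diagrams is $x^{\lambda}$ times a single basis diagram, the span of the fixed (i.e.\ $\mathbb{Z}_2$-stable) diagrams is closed under multiplication and contains $1$. The paper itself states this proposition without proof, citing \cite{VSS}, where the same closure argument (for general $G$-relations) is used, so there is nothing to compare beyond noting that your automorphism packaging is the clean way to record it.
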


\begin{defn}\label{D2.6}(\cite{VSS})
For $0 \leq 2s_1+s_2 \leq 2k,$ define $I^{2k}_{2s_1+s_2}$ as follows:

\centerline{ $I^{2k}_{2s_1+s_2} = \left\{ d \in R_{k \cup k'}^{\mathbb{Z}_2} \ | \ \sharp^p(d) = 2s_1+s_2 \right\}$}

i.e., $d$ has $s_1$ number of pairs of $\{e\}$-through classes and $s_2$ number of $\mathbb{Z}_2$-through classes.

 It is clear that $R_{k \cup k'}^{\mathbb{Z}_2} = \underset{\substack{0 \leq s_1, s_2 \leq 2k\\ 0 \leq 2s_1+s_2 \leq 2k}}{\cup} I^{2k}_{2s_1+s_2}.$
\end{defn}

\subsection{Signed Partition Algebras:}
\begin{defn}\label{D2.7} (\cite{SP}, \textbf{Definition 3.1.1})
Let the signed partition algebra $\overrightarrow{A}_k^{\mathbb{Z}_2}(x)$ be the subalgebra of $A_{2k}(x)$ generated by
\begin{center}
\includegraphics[height=6cm, width=8cm]{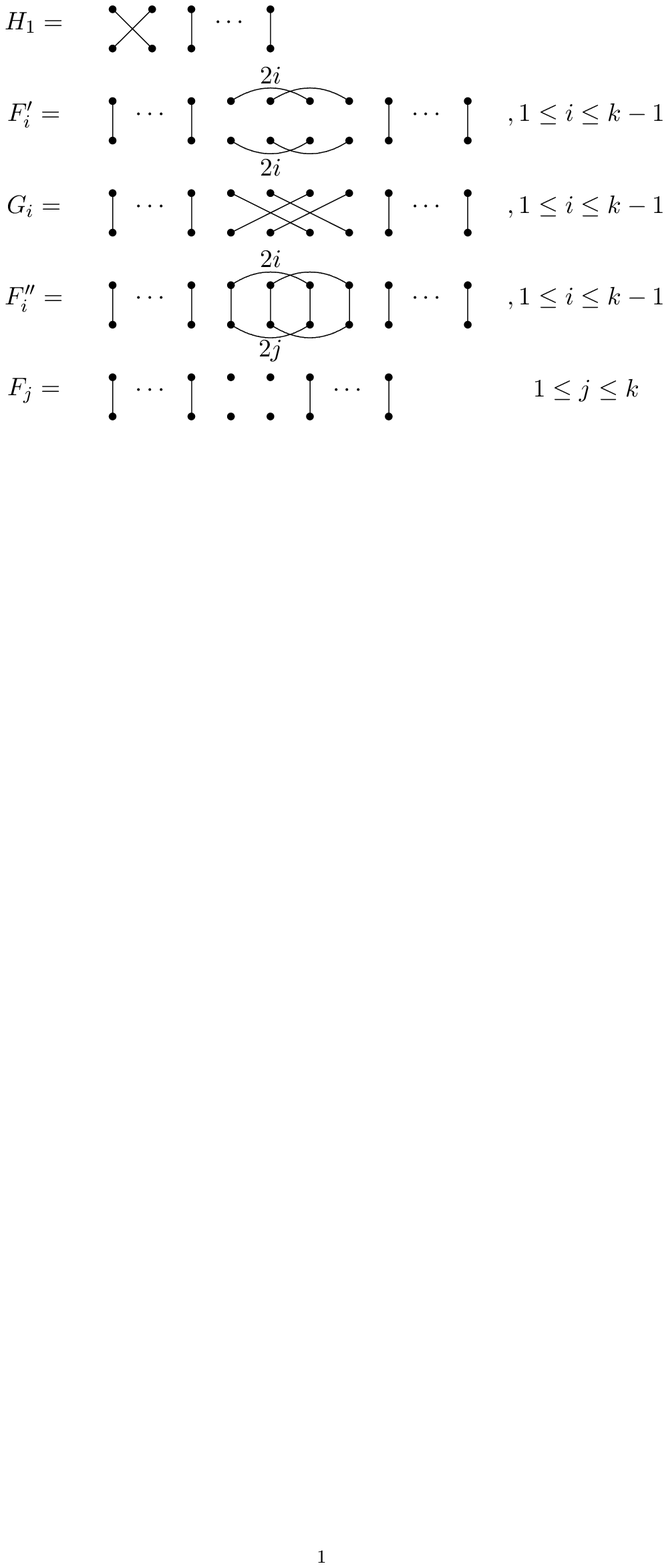}
\end{center}
\vspace{-0.5cm}
The subalgebra of the signed partition algebra generated by $F'_i, G_i, F''_i, F_j, 1 \leq i \leq k -1, 1 \leq j \leq k$ is isomorphic on to the partition algebra $A_{2k}(x^2).$
\end{defn}
\begin{defn}\label{D2.8}(\cite{SP}, \textbf{Definition 3.1.2})
Let $d \in R_{k \cup k'}^{\mathbb{Z}_2}$. For $0 \leq r \leq 2k -1, 0 \leq s_1, s_2 \leq k - 1,$

\NI $\widetilde{I}^{2k}_{2s_1+s_2} = \left\{ d \in I^{2k}_{2s_1+s_2} \ | \ s_1+s_2 + H_e(d^+) + H_{\mathbb{Z}_2}(d^+) \leq k -1 \text{ and } s_1+s_2 + H_e(d^-) + H_{\mathbb{Z}_2}(d^-) \leq k -1 \right\}$ where
\begin{enumerate}
  \item[(i)] $s_1 = \natural \{C : C$ is a through class of $R^d$ such that $H_C^d = \{e\}\},$
  \item[(ii)] $s_2 = \natural \{C : C$ is a through class of $R^d$ such that $H_C^d = \mathbb{Z}_2 \},$
  \item[(iii)] $H_e(d^+) \left( H_e(d^-)\right)$ is the number of $\{e\}$ horizontal edges $C$ in the top(bottom) row of $R^d$ such that $H_C^d = \{e\}$ and $|C| \geq 2,$
  \item[(iv)] $H_{\mathbb{Z}_2}(d^+) \left( H_{\mathbb{Z}_2}(d^-)\right)$ is the number of $\mathbb{Z}_2$ horizontal edges $C$ in the top(bottom) row of $R^d$ such that $H_C^d = \mathbb{Z}_2$
  \item[(v)] $\sharp^p \left( R^d \right) = s_1 + s_2.$
\end{enumerate}
\NI The linear span of $\underset{\substack{0 \leq s_1 \leq k , 0 \leq s_2 \leq k-1 \\ 0 \leq s_1+s_2 \leq k-1}}{\cup}$ is nothing but the signed partition algebra $\overrightarrow{A}_k^{\mathbb{Z}_2}.$
\end{defn}

\begin{rem}\label{R2.9}
The algebra generated by $R^d$ where $d \in \{F'_i, G_i, F''_i, F_j\}_{\substack{1 \leq i \leq k-1 \\ \hspace{-0.4cm}1 \leq j \leq k}}$ is isomorphic to the partition algebra $A_k(x).$

\NI Also, let $I_s^k$ be the set of all $k$-partition diagrams $R^d$ in $A_k(x)$ such that $\sharp^p \left(R^d \right) = s$ where $d \in I^{2k}_{2s_1+0},$ which is contained in $ A_{2k}(x^2).$
\end{rem}

\begin{defn}\label{D2.10}
\textbf{\\}
\begin{enumerate}
  \item[(a)] Let $d \in R_{k \cup k'}^{\mathbb{Z}_2}$ with $s_1$ number of pairs of $\{e\}$-through classes, $s_2$ number of $\mathbb{Z}_2$-through classes, $r_1$ number of pairs of $\{e\}$-horizontal edges and $r_2$ number of $\mathbb{Z}_2$-horizontal edges.

      Let the number of vertices in the top row of $s_1$ number of pairs of $ \{e\}$-through classes($s_2$ number of $\mathbb{Z}_2$-through classes) be denoted by $\lambda_{1j}, 1 \leq j \leq s_1 \left( \lambda_{2j}, 1 \leq j \leq s_2\right)$ and let the number of vertices in the top row of $r_1$ number of pairs of $\{e\}$-horizontal edges ($r_2$ number of horizontal edges) be denoted by $\lambda_{3j}, 1 \leq j \leq r_1 \left( \lambda_{4j}, 1 \leq j \leq r_2\right)$ such that

      \centerline{$\phi(d^+) = \left( \lambda_1, \lambda_2, \lambda_3, \lambda_4\right)$}

       \NI where $\phi: R_{k \cup k'}^{\mathbb{Z}_2} \rightarrow P(2k), \lambda_1 = \left( \lambda_{11}, \lambda_{12}, \cdots, \lambda_{1s_1}\right), \lambda_2 = \left( \lambda_{21}, \lambda_{22}, \cdots, \lambda_{2s_2}\right), \\ \lambda_3 = \left( \lambda_{31}, \lambda_{32}, \cdots, \lambda_{3r_1}\right), \lambda_4 = \left( \lambda_{41}, \lambda_{42}, \cdots, \lambda_{4r_2}\right)$ with $|\lambda_1|+|\lambda_2|+|\lambda_3|+|\lambda_4| = 2k$ and $d^+$ is obtained from $d$ by restricting the vertex set to  $\{(1, e), (1, g), \cdots, (k, e), (k, g)\}.$
  \item[(b)]Let $d \in R_{k \cup k'}$ with $s$ number of through classes and $r$ number of horizontal edges.

      Let the number of vertices in the top row of $s$ number of through classes be denoted by $\lambda_{1j}, 1 \leq j \leq s $ and let the number of vertices in the top row of $r$ number of horizontal edges  be denoted by $\lambda_{2j}, 1 \leq j \leq r $ such that

      \centerline{$\phi'(d^+) = \left( \lambda_1, \lambda_2\right)$}

       \NI where $\phi' : R_{k \cup k'} \rightarrow P(k), \lambda_1 = \left( \lambda_{11}, \lambda_{12}, \cdots, \lambda_{1s}\right), \lambda_2 = \left( \lambda_{21}, \lambda_{22}, \cdots, \lambda_{2r}\right)$ with $|\lambda_1| + |\lambda_2| = k$ and $d^+$ is obtained from $d$ by restricting the vertex set to  $\{1, 2, \cdots, k\}.$
  \end{enumerate}
\end{defn}

\subsection{Gram Matrices of algebra of $\mathbb{Z}_2$-relations, signed partition algebras and partition algebras}

\begin{defn} \label{D2.11}\textbf{(\cite{KP}, Definition 3.1)}
Define,
\begin{enumerate}
  \item[(a)] $\Omega_{s_1, s_2}^{r_1, r_2} = \Big\{ \left[ \lambda_1^2 \right]^1 \left[ 2\lambda_2 \right]^2 \left[ \lambda_3^2 \right]^3 \left[ 2 \lambda_4 \right]^4 \Big| \lambda_1 \vdash k_1, \lambda_2 \vdash k_2, \lambda_3 \vdash k_3, \lambda_4 \vdash k_4  \text{ with } \lambda_1 \in \mathbb{P}(k_1, s_1),  $

$ \hspace{2cm}  \lambda_2 \in \mathbb{P}(k_2, s_2), \lambda_3 \in \mathbb{P}(k_3, r_1), \lambda_4 \in \mathbb{P}(k_4, r_2) \text{ such that } k_1 + k_2 + k_3 + k_4 = k \Big\}$

\NI where $\lambda_1^2 = \left( \lambda_{11}^2, \lambda_{12}^2, \cdots, \lambda_{1 s_1}^2 \right), 2 \lambda_2 = \left( 2 \lambda_{21}, 2 \lambda_{22}, \cdots, 2 \lambda_{2 s_2} \right), \lambda_3^2 = \left( \lambda_{31}^2, \lambda_{32}^2, \cdots, \lambda_{3 r_1}^2 \right)$ \\ and $2 \lambda_4 = \left( 2 \lambda_{41}, 2 \lambda_{42}, \cdots, 2 \lambda_{4 r_2}\right).$

  \item[(b)] $\Omega^{r}_{s} = \{ [\lambda_1]^1 [\lambda_2]^2 \ | \ \lambda_1 \in \mathbb{P}(k_1, s), \lambda_2 \in \mathbb{P}(k_2, r) \text{ such that } k_1 + k_2 = k\}.$
\end{enumerate}

\end{defn}

\begin{notation}\label{N2.12} \textbf{(\cite{KP}, Notation 3.5)}

\begin{enumerate}
            \item[(a)]  For $ 0 \leq r_1 \leq k-s_1 - s_2, 0 \leq r_2 < k -s_1-s_2, $ $ 0 \leq s_1 \leq k$ and $0 \leq s_2 \leq k, $ put $ J^{2k}_{2s_1+s_2} = \underset{\substack{0 \leq r_1 \leq k-s_1-s_2 \\ 0 \leq r_2 \leq k-s_1-s_2}}{\cup} \ \  \mathbb{J}_{2s_1+ s_2}^{2r_1 + r_2}$

                \NI where $\mathbb{J}_{2s_1 + s_2}^{2r_1+ r_2} = \Big\{ d \in I^{2k}_{2s_1 + s_2} \ \Big| \  d = U^{(d, P)}_{(d, P)},  d^+ $ and $d^-$ are the same, $\sharp^p\left(U^{(d, P)}_{(d, P)}\right) = 2s_1 + s_2, U^{(d, P)}_{(d, P)}$ has $r_1$ number of pairs of $\{e\}$-horizontal edges and $r_2$ number of $\mathbb{Z}_2$-horizontal edges, $(d, P)$ denotes the top row of $d, P$ is a subset of $d$ such that $|P| = 2s_1+s_2 \Big\}.$

 Also,  $\left| \mathbb{J}_{2s_1 + s_2}^{2 r_1 + r_2} \right|  =  f^{2r_1+r_2}_{2s_1+s_2}$ and $\left| J^{2k}_{2s_1+s_2}\right| = f_{2s_1+s_2}.$

\item[(b)]  For $ 0 \leq r_1 \leq k-s_1 - s_2, 0 \leq r_2 < k -s_1-s_2-1, $ $ 0 \leq s_1 \leq k$ and $0 \leq s_2 \leq k - 1,  s_1+s_2+r_1+r_2 \leq k - 1$ and if $s_1+s_2+r_1+r_2 = k$ then $s_1 = k$ or $r_1 \neq 0$

    \NI Put, $ \widetilde{J}^{2k}_{2s_1+s_2} = \underset{\substack{0 \leq r_1 \leq k-s_1-s_2 \\ 0 \leq r_2 < k-s_1-s_2}}{\cup} \ \  \widetilde{\mathbb{J}}_{2s_1+ s_2}^{2r_1 + r_2}$

\NI where $\widetilde{\mathbb{J}}_{2s_1 + s_2}^{2r_1+ r_2} = \Big\{ \widetilde{d} \in \widetilde{I}^{2k}_{2s_1 + s_2} \ \Big| \  \widetilde{d} = U^{(\widetilde{d}, \widetilde{P})}_{(\widetilde{d}, \widetilde{P})},  \widetilde{d}^+ $ and $\widetilde{d}^-$ are the same, $\sharp^p\left(U^{(\widetilde{d}, \widetilde{P})}_{(\widetilde{d}, \widetilde{P})}\right) = 2s_1 + s_2,  U^{(\widetilde{d}, \widetilde{P})}_{(\widetilde{d}, \widetilde{P})}$ has $r_1$ number of pairs of $\{e\}$-horizontal edges and $r_2$ number of $\mathbb{Z}_2$-horizontal edges, $(\widetilde{d}, \widetilde{P})$ is the top row of $\widetilde{d}, \widetilde{P}$ is a subset of $\widetilde{d}$ such that $|\widetilde{P}| = 2s_1+s_2 \Big\}.$

 Also, $\left| \widetilde{\mathbb{J}}_{2s_1 + s_2}^{2 r_1 + r_2} \right|  =  \widetilde{f}^{2r_1+r_2}_{2s_1+s_2}$ and $\left| \widetilde{J}^{2k}_{2s_1+s_2}\right| = \widetilde{f}_{2s_1+s_2}.$

\item[(c)] For $ 0 \leq r \leq k-s,  0 \leq s \leq k$  put $ J^{k}_{s} = \underset{0 \leq r \leq k-s }{\cup} \ \  \mathbb{J}_{s}^{r}$ where

$\mathbb{J}_{s}^{r} = \Big\{ R^d \in I^{k}_{s} \ \Big| \ R^d = U^{R^d}_{R^d}, \left( R^d\right)^+ \text{ and }  \left(R^d\right)^{-} \text{ are the same}, \sharp^p(U^{R^d}_{R^d}) = s,  U^{R^{d}}_{R^d} \text{ has }$

 $ \hspace{9cm} r \text{ number of horizontal edges} \Big\}$

 Also, $ \left| \mathbb{J}_{s}^{r} \right|  =   = f^{r}_{s} \text{ and }
  \left|  J^{k}_{s} \right|  =  f_s.
$

          \end{enumerate}

\end{notation}

\begin{defn} \label{D2.13} \textbf{(\cite{KP}, Definition 3.6)}
\begin{enumerate}
  \item[(i)] The diagrams in $J^{2k}_{2s_1 + s_2}\left(\widetilde{J}^{2k}_{2s_1 + s_2}\right)$ are indexed as follows:

\centerline{$\left\{U^{(d_i, P_i)}_{(d_i, P_i)} \ \Big| \ 1 \leq i \leq f_{2s_1 + s_2}\right\}_{U^{(d_i, P_i)}_{(d_i, P_i)} \in J^{2k}_{2s_1+s_2}} \left(\left\{U^{(\widetilde{d}_i, \widetilde{P}_i)}_{(\widetilde{d}_i, \widetilde{P}_i)} \ \Big| \ 1 \leq i \leq \widetilde{f}_{2s_1 + s_2}\right\}_{U^{(\widetilde{d}_i, \widetilde{P}_i)}_{(\widetilde{d}_i, \widetilde{P}_i)} \in \widetilde{J}^{2k}_{2s_1+s_2}}\right).$}

$i < j,$

\begin{itemize}
  \item[(1)] if $2r_1+r_2 < 2r'_1+r'_2$
  \item[(2)] if $2r_1+r_2 = 2r'_1+r'_2 $ and $r_1+r_2 < r'_1+r'_2$
  \item[(3)] if  $2r_1+r_2 = 2r'_1+r'_2$ and $r_1+r_2 = r'_1+r'_2$ then it can be indexed arbitrarily.
\end{itemize}
where $r_1$ is the number of pairs of $\{e\}$-horizontal edges in $U^{(\widetilde{d}_i, \widetilde{P}_i)}_{(\widetilde{d}_i, \widetilde{P}_i)}\left( U^{(d_i, P_i)}_{(d_i, P_i)}\right), r'_1$ is the number of pairs of $\{e\}$-horizontal edges in $U^{(\widetilde{d}_j, \widetilde{P}_j)}_{(\widetilde{d}_j, \widetilde{P}_j)}\left( U^{(d_j, P_j)}_{(d_j, P_j)}\right), r_2$ is the number of $\mathbb{Z}_2$-horizontal edges in $U^{(\widetilde{d}_i, \widetilde{P}_i)}_{(\widetilde{d}_i, \widetilde{P}_i)}\left( U^{(d_i, P_i)}_{(d_i, P_i)}\right)$ and $r'_2$ is the number of $\mathbb{Z}_2$-horizontal edges in $U^{(\widetilde{d}_j, \widetilde{P}_j)}_{(\widetilde{d}_j, \widetilde{P}_j)}\left( U^{(d_j, P_j)}_{(d_j, P_j)}\right).$
  \item[(ii)]  The diagrams in $J^{k}_{s}$ are indexed as follows:

\centerline{$\left\{U^{R^{d_i}}_{R^{d_i}} \ \Big| \ 1 \leq i \leq f_{s}\right\}_{U^{R^{d_i}}_{R^{d_i}} \in J_s^k}$}

$i < j,$
\begin{enumerate}
  \item[(1)] if $r < r'$,
  \item[(2)] if $r = r'$ then it can be indexed arbitrarily
\end{enumerate}

where $r(r')$ is the number of horizontal edges in $U^{R^{d_i}}_{R^{d_i}}\left( U^{R^{d_j}}_{R^{d_j}}\right).$

\end{enumerate}

\end{defn}

\begin{defn} \label{D2.14} \textbf{(\cite{KP}, Definition 3.7)}
\begin{enumerate}
  \item[(a)]\textbf{Gram matrices of the algebra of $\mathbb{Z}_2$-relations:}

  \NI For $0 \leq s_1, s_2 \leq k, 0 \leq s_1+s_2 \leq k,$ define Gram matrices of the algebra of $\mathbb{Z}_2$-relations $G_{2s_1+s_2}$ as follows:

\centerline{$G_{2s_1 + s_2} = \left( A_{2r_1 + r_2, 2r'_1+r'_2}\right)_{\substack{\hspace{-1.5cm}0 \leq r_1+r_2, r'_1 + r'_2 \leq k-s_1-s_2 \\ 0 \leq r_1, r'_1 \leq k-s_1-s_2, 0 \leq r_2, r'_2 \leq k-s_1-s_2}} $}
\NI where $A_{2r_1 + r_2, 2r'_1 + r'_2}$ denotes the block matrix whose entries are $a_{ij}$ with
\begin{center}
$\begin{array}{lllll}
  a_{ij} & = & x^{l(P_i \vee P_j)}  &  if &  \sharp^p \left( U^{(d_i, P_i)}_{(d_i, P_i)} U^{(d_j, P_j)}_{(d_j, P_j)} \right) = 2s_1 + s_2 \\
      a_{ij} & = & 0 & \text{Otherwise } i.e., &  \sharp^p \left( U^{(d_i, P_i)}_{(d_i, P_i)} U^{(d_j, P_j)}_{(d_j, P_j)}\right) < 2s_1 + s_2, \\
 \end{array}
$
\end{center}
 $ l(P_i \vee P_j) = l \left( U^{(d_i, P_i)}_{(d_i, P_i)} U^{(d_j, P_j)}_{(d_j, P_j)}\right)  $ where $l(P_i \vee P_j)$ denotes the number of connected components in $d_i. d_j$ excluding the union of all the connected components of $P_i$ and $P_j$ or equivalently, $ l \left( U^{(d_i, P_i)}_{(d_i, P_i)} U^{(d_j, P_j)}_{(d_j, P_j)}\right)$ is the number of loops which lie in the middle row when $U^{(d_i, P_i)}_{(d_i, P_i)}$ is multiplied with $U^{(d_j, P_j)}_{(d_j, P_j)}$, $U^{(d_i, P_i)}_{(d_i, P_i)} \in \mathbb{J}^{2r_1 + r_2}_{2s_1 +s_2}$ and $U^{(d_j, P_j)}_{(d_j, P_j)} \in \mathbb{J}^{2r'_1+ r'_2}_{2s_1 +s_2}$ respectively.

  \item[(b)]\textbf{Gram matrices of signed partition algebra:}

   \NI For $0 \leq s_1 \leq k, 0 \leq s_2 \leq k-1, 0 \leq s_1+s_2 \leq k-1,$ define Gram matrix of the signed partition algebra $\widetilde{G}_{2s_1+s_2}$  as follows:

\centerline{$\widetilde{G}_{2s_1 + s_2} = \left( \widetilde{A}_{2r_1 + r_2, 2r'_1+r'_2}\right)_{\substack{\hspace{-1.5cm}0 \leq r_1+r_2, r'_1 + r'_2 \leq k-s_1-s_2    \\ 0 \leq r_1, r'_1 \leq k-s_1-s_2, 0 \leq r_2, r'_2 \leq k-s_1-s_2-1}}$} where $\widetilde{A}_{2r_1 + r_2, 2r'_1 + r'_2}$ denotes the block matrix whose entries are $a_{ij}$ with
\begin{center}
$\begin{array}{lllll}
  a_{ij} & = & x^{l(\widetilde{P}_i \vee \widetilde{P}_j)}  &  if &  \sharp^p \left( U^{(\widetilde{d}_i, \widetilde{P}_i)}_{(\widetilde{d}_i, \widetilde{P}_i)} U^{(\widetilde{d}_j, \widetilde{P}_j)}_{(\widetilde{d}_j, \widetilde{P}_j)} \right) = 2s_1 + s_2 \\
      a_{ij} & = & 0 & \text{Otherwise } i.e., &  \sharp^p \left( U^{(\widetilde{d}_i, \widetilde{P}_i)}_{(\widetilde{d}_i, \widetilde{P}_i)} U^{(\widetilde{d}_j, \widetilde{P}_j)}_{(\widetilde{d}_j, \widetilde{P}_j)}\right) < 2s_1 + s_2, \\
 \end{array}
$
\end{center}
 $ l(\widetilde{P}_i \vee \widetilde{P}_j) = l \left( U^{(\widetilde{d}_i, \widetilde{P}_i)}_{(\widetilde{d}_i, \widetilde{P}_i)} U^{(\widetilde{d}_j, \widetilde{P}_j)}_{(\widetilde{d}_j, \widetilde{P}_j)}\right)  $ where $l(\widetilde{P}_i \vee \widetilde{P}_j)$ denotes the number of connected components in $\widetilde{d}_i. \widetilde{d}_j$ excluding the union of all the connected components of $\widetilde{P}_i$ and $\widetilde{P}_j$ or equivalently, $ l \left( U^{(\widetilde{d}_i, \widetilde{P}_i)}_{(\widetilde{d}_i, \widetilde{P}_i)} U^{(\widetilde{d}_j, \widetilde{P}_j)}_{(\widetilde{d}_j, \widetilde{P}_j)}\right)$ is the number of loops which lie in the middle row when $U^{(\widetilde{d}_i, \widetilde{P}_i)}_{(\widetilde{d}_i, \widetilde{P}_i)}$ is multiplied with $U^{(\widetilde{d}_j, \widetilde{P}_j)}_{(\widetilde{d}_j, \widetilde{P}_j)}$, $U^{(\widetilde{d}_i, \widetilde{P}_i)}_{(\widetilde{d}_i, \widetilde{P}_i)} \in \mathbb{J}^{2r_1 + r_2}_{2s_1 +s_2}$ and $U^{(\widetilde{d}_j, \widetilde{P}_j)}_{(\widetilde{d}_j, \widetilde{P}_j)} \in \mathbb{J}^{2r'_1+ r'_2}_{2s_1 +s_2}$ respectively.

  \item[(c)]\textbf{Gram matrices of partition algebra:}

  \NI For $0 \leq s \leq k,$ define Gram matrix of the partition algebra $G_s$   as follows:

\centerline{$G_{s} = \left( A_{r, r'}\right)_{0 \leq r, r' \leq k-s} $}
\NI where $A_{r, r'}$ denotes the block matrix whose entries are $a_{ij}$ with
\begin{center}
$\begin{array}{lllll}
  a_{ij} & = & x^{l\left(R^{d_i} R^{d_j}\right)}  &  if &  \sharp^p \left( U^{R^{d_i}}_{R^{d_i}} U^{R^{d_j}}_{R^{d_j}} \right) = s\\
      a_{ij} & = & 0 & Otherwise & i.e., \sharp^p \left( U^{R^{d_i}}_{R^{d_i}} U^{R^{d_j}}_{R^{d_j}}\right) < s, \\
 \end{array}
$
\end{center}
 $ l\left(R^{d_i} R^{d_j}\right) = l \left( U^{R^{d_i}}_{R^{d_i}} U^{R^{d_j}}_{R^{d_j}}\right)  $ where $l(d_i  d_j)$ denotes the number of connected components which lie in the middle row while multiplying  $U^{R^{d_i}}_{R^{d_i}}$ with $ U^{R^{d_j}}_{R^{d_j}},$ $U^{R^{d_i}}_{R^{d_i}} \in \mathbb{J}^{r}_{s}$ and $U^{R^{d_j}}_{R^{d_j}} \in \mathbb{J}^{r'}_{s}$ respectively.

\end{enumerate}

\end{defn}
\begin{notation} \label{N2.15} \textbf{(\cite{KP}, Notation 3.21)}
\begin{enumerate}
  \item[(a)] Let $U^{(d_i, P_i)}_{(d_i, P_i)}, U^{(d_j, P_j)}_{(d_j, P_j)} \in \mathbb{J}^{2r_1+r_2}_{2s_1+s_2}$ such that $\sharp^p \left( U^{(d_i, P_i)}_{(d_i, P_i)}. U^{(d_j, P_j)}_{(d_j, P_j)}\right) < 2s_1 + s_2,$ so that the $ij$-entry of the block matrix $A_{2r_1 + r_2, 2r_1 + r_2}$ is zero and $0 \leq r_1 \leq k - s_1 - s_2 , 0 \leq r_2 < k-s_1-s_2, 2r_1+r_2 \leq 2k - 2s_1-s_2.$

 Put $U^{(d_i, P_i)}_{(d_i, P_i)} = U^{l^1_{f}}_{l^1_{f}} \otimes U^{d_{i}-f}_{d_i-f}$ and $U^{(d_j, P_j)}_{(d_j, P_j)} = U^{l^2_{f}}_{l^2_{f}} \otimes U^{d_{j}-f}_{d_j-f}$ where $U^{l^1_{f}}_{l^1_{f}} \left(U^{l^2_{f}}_{l^2_{f}} \right)$ is the sub diagram of $U^{(d_i, P_i)}_{(d_i, P_i)} \left(U^{(d_j, P_j)}_{(d_j, P_j)} \right), U^{l^1_f}_{l^1_f}, U^{l^2_f}_{l^2_f} \in \mathbb{J}^{2t_1+t_2}_{2t_1+t_2}$ and every $\{e\} \left( \mathbb{Z}_2 \right)$ through class of $U^{l^1_f}_{l^1_f}$ is replaced by a $\{e\} \left( \mathbb{Z}_2 \right)$ horizontal edge and vice versa.
  \item[(b)] Let $U^{(\widetilde{d}_i, \widetilde{P}_i)}_{(\widetilde{d}_i, \widetilde{P}_i)}, U^{(\widetilde{d}_j, \widetilde{P}_j)}_{(\widetilde{d}_j, \widetilde{P}_j)} \in \widetilde{\mathbb{J}}^{2r_1+r_2}_{2s_1+s_2}$ such that $\sharp^p \left( U^{(\widetilde{d}_i, \widetilde{P}_i)}_{(\widetilde{d}_i, \widetilde{P}_i)}. U^{(\widetilde{d}_j, \widetilde{P}_j)}_{(\widetilde{d}_j, \widetilde{P}_j)}\right) < 2s_1 + s_2,$ so that the $ij$-entry of the block matrix $\widetilde{A}_{2r_1 + r_2, 2r_1 + r_2}$ is zero and $0 \leq r_1 \leq k - s_1 - s_2 -1, 0 \leq r_2 < k-s_1-s_2-1, 2r_1+r_2 \leq 2k - 2s_1-s_2-1.$

 Put $U^{(\widetilde{d}_i, \widetilde{P}_i)}_{(\widetilde{d}_i, \widetilde{P}_i)} = \widetilde{U}^{l^1_{f}}_{l^1_{f}} \otimes \widetilde{U}^{d_{i}-f}_{d_i-f}$ and $U^{(\widetilde{d}_j, \widetilde{P}_j)}_{(\widetilde{d}_j, \widetilde{P}_j)} = \widetilde{U}^{l^2_{f}}_{l^2_{f}} \otimes \widetilde{U}^{d_{j}-f}_{d_j-f}$ where $\widetilde{U}^{l^1_{f}}_{l^1_{f}} \left(\widetilde{U}^{l^2_{f}}_{l^2_{f}} \right)$ is the sub diagram of $U^{(\widetilde{d}_i, \widetilde{P}_i)}_{(\widetilde{d}_i, \widetilde{P}_i)} \left(U^{(\widetilde{d}_j, \widetilde{P}_j)}_{(\widetilde{d}_j, \widetilde{P}_j)} \right), \widetilde{U}^{l^1_f}_{l^1_f}, \widetilde{U}^{l^2_f}_{l^2_f} \in \widetilde{\mathbb{J}}^{2t_1+t_2}_{2t_1+t_2}$ and every $\{e\} \left( \mathbb{Z}_2 \right)$ through class of $\widetilde{U}^{l^1_f}_{l^1_f}$ is replaced by a $\{e\} \left( \mathbb{Z}_2 \right)$ horizontal edge and vice versa.
 \item[(c)]Let $U^{R^{d_i}}_{R^{d_i}}, U^{R^{d_j}}_{R^{d_j}} \in \mathbb{J}^{r}_{s}$ such that $\sharp^p \left( U^{R^{d_i}}_{R^{d_i}}. U^{R^{d_j}}_{R^{d_j}}\right) < s,$ so that the $ij$-entry of the block matrix $A_{r, r}$ is zero and $0 \leq r \leq k - s.$

 Put $U^{R^{d_i}}_{R^{d_i}} = U^{l_1}_{l_1} \otimes U^{d_{i}-l_1}_{d_i-l_1}$ and $U^{R^{d_j}}_{R^{d_j}} = U^{l_2}_{l_2} \otimes U^{d_{j}-l_2}_{d_j-l_2}$ where $U^{l_1}_{l_1} \left(U^{l_2}_{l_2} \right)$ is the sub diagram of $U^{R^{d_i}}_{R^{d_i}} \left(U^{R^{d_j}}_{R^{d_j}} \right), U^{l_1}_{l_1}, U^{l_2}_{l_2} \in \mathbb{J}^{t}_{t}$ and every through class of $U^{l_1}_{l_1}$ is replaced by a  horizontal edge and vice versa.
                    \end{enumerate}
\end{notation}

\begin{prop} \label{P2.16}\textbf{(\cite{KP}, Proposition 3.27)}

Let $U^{(d_i, P_i)}_{(d_i, P_i)}, U^{(d_j, P_j)}_{(d_j, P_j)}, U^{(\widetilde{d}_i, \widetilde{P}_i)}_{(\widetilde{d}_i, \widetilde{P}_i)}, U^{(\widetilde{d}_j, \widetilde{P}_j)}_{(\widetilde{d}_j, \widetilde{P}_j)}$ and $U^{R^{d_i}}_{R^{d_i}}, U^{R^{d_j}}_{R^{d_j}}$ be as in Notation \ref{N2.15}.
\begin{enumerate}
\item[(a)] After performing the column operations  to eliminate the non-zero entries which lie above corresponding to the diagrams coarser than $U^{(d_j, P_j)}_{(d_j, P_j)}$,

    \item[(b)] After performing the column operations  to eliminate the non-zero entries which lie above corresponding to the diagrams coarser than $U^{(\widetilde{d}_j, \widetilde{P}_j)}_{(\widetilde{d}_j, \widetilde{P}_j)}$,

     then the $ij$-entry of the block matrix $A_{2r_1+r_2, 2r_1+r_2}$ for $0 \leq r_1+r_2, r_1, r_2 \leq k - s_1-s_2$ and the block matrix $\widetilde{A}_{2r_1+r_2, 2r_1+r_2}$ for $0 \leq r_1+r_2, r_1, r_2 \leq k - s_1-s_2-1$ is replaced by
\begin{enumerate}
  \item[(i)] $(-1)^{t_1+t_2}\ (t_1)! \ (t_2)! \ 2^{t_1} \underset{j=t_1}{\overset{r_1-1}{\prod}}[x^2-x-2(s_1+j)] \underset{m=t_2}{\overset{r_2-1}{\prod}} [x-(s_2+m)]$ \ \ \ \ if $r_1 \geq 1$ and $r_2 \geq 1,$
  \item[(ii)] $(-1)^{t_2} \  \ (t_2)! \ \underset{m=t_2}{\overset{r_2-1}{\prod}} [x-(s_2+m)]$ \ \ \ \ if $r_1 = 0$ and $r_2 \neq 0,$
  \item[(iii)] $(-1)^{t_1}\ (t_1)!  \ 2^{t_1} \underset{j=t_1}{\overset{r_1-1}{\prod}}[x^2-x-2(s_1+j)]$ \ \ \ \ if $r_1 \neq 0$ and $r_2 = 0,$
\end{enumerate}
\item[(c)] After performing the column operations  to eliminate the non-zero entries which lie above corresponding to the diagrams coarser than $U^{R^{d_j}}_{R^{d_j}}$, then the $ij$-entry is replaced by

    \centerline{$(-1)^{t}\ t!  \underset{j=t}{\overset{r-1}{\prod}} [x-(s+l)].$}
\end{enumerate}
\end{prop}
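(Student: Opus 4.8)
The plan is to localise the computation to a single ``active'' tensor factor and then to induct on the number of horizontal edges lying above a fixed common core. First I would invoke the decompositions of Notation~\ref{N2.15}: write $U^{(d_i, P_i)}_{(d_i, P_i)} = U^{l^1_{f}}_{l^1_{f}} \otimes U^{d_{i}-f}_{d_i-f}$ and $U^{(d_j, P_j)}_{(d_j, P_j)} = U^{l^2_{f}}_{l^2_{f}} \otimes U^{d_{j}-f}_{d_j-f}$, and analogously in the signed and in the partition cases. Since the drop in propagating number of the product $U_i U_j$ is caused entirely by the interaction of the sub-diagrams $U^{l^1_{f}}_{l^1_{f}}, U^{l^2_{f}}_{l^2_{f}} \in \mathbb{J}^{2t_1+t_2}_{2t_1+t_2}$, and since the diagrams coarser than $U^{(d_j,P_j)}_{(d_j,P_j)}$ that index the nonzero entries to be eliminated are exactly those obtained by coarsening \emph{inside} this active factor, the whole argument factors through: it suffices to prove the formula when the common (non-active) part is trivial, i.e.\ to compute, after the prescribed column operations, the $ij$-entry of the diagonal block attached to the active factor, with $t_1$ (resp.\ $t_2$) the number of $\{e\}$-pairs (resp.\ $\mathbb{Z}_2$-classes) of through-type in $U^{l^1_{f}}_{l^1_{f}}$ playing the role of the ``base level''.

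Next I would set up an induction on $r_1+r_2$, with base case $r_1 = t_1,\ r_2 = t_2$. In the base case $U^{l^1_{f}}_{l^1_{f}} = U^{l^2_{f}}_{l^2_{f}}$ and both products in the claimed formula are empty; the surviving entry is the signed number of ways of matching the $t_1$ pairs of $\{e\}$-horizontal edges and the $t_2$ $\mathbb{Z}_2$-horizontal edges of $U^{l^1_{f}}_{l^1_{f}}$ against those of $U^{l^2_{f}}_{l^2_{f}}$ that is \emph{not} already absorbed by a coarser diagram, giving $t_1!$ from permuting the $\{e\}$-pairs, $t_2!$ from permuting the $\mathbb{Z}_2$-classes, $2^{t_1}$ from the internal $\mathbb{Z}_2$-freedom inside each $\{e\}$-pair (Remark~\ref{R2.4}), and the sign $(-1)^{t_1+t_2}$ from the $t_1+t_2$ elimination steps --- exactly the constant in (i)--(iii), and $(-1)^t t!$ in case (c).

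For the inductive step, assuming the formula at level $r_1+r_2-1$, I would peel off the last horizontal edge of $U^{l^2_{f}}_{l^2_{f}}$. If it is a $\mathbb{Z}_2$-horizontal edge, there is a canonical diagram $\widehat{U}$ obtained by deleting it, sitting at level $(r_1, r_2-1)$; resolving how this edge meets $U^{l^1_{f}}_{l^1_{f}}$ in the product shows that the pre-elimination entry equals $x$ times the corresponding entry for $\widehat{U}$ (a fresh $\mathbb{Z}_2$-loop contributes the loop parameter $x$) minus a sum of monomials indexed by diagrams strictly coarser than $U^{(d_j,P_j)}_{(d_j,P_j)}$, one for each of the $s_2$ $\mathbb{Z}_2$-through classes and each of the $r_2-1$ already-processed $\mathbb{Z}_2$-horizontal edges that the peeled edge can instead be merged with; the column operations of the Proposition cancel precisely these monomials, leaving the factor $[x-(s_2+r_2-1)]$ times the level-$(r_1,r_2-1)$ value, and the inductive hypothesis finishes this case. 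The $\{e\}$-edge case is parallel: since each $\{e\}$-component occurs as a $\mathbb{Z}_2$-conjugate pair, peeling such a pair gives a fresh $\{e\}$-loop pair worth $x^2$, corrected by $x$ (the two halves of the pair meeting each other) and by $2(s_1+r_1-1)$ merge-terms (the $s_1$ $\{e\}$-through pairs and $r_1-1$ processed $\{e\}$-horizontal pairs, each with a factor $2$ from the $\mathbb{Z}_2$-action), yielding the factor $[x^2-x-2(s_1+r_1-1)]$; cases (ii), (iii) are the specialisations $r_1=0$ and $r_2=0$, and case (c) is the same argument with no pair/$\mathbb{Z}_2$ bookkeeping (a fresh loop is worth $x$, there are $s+r-1$ merge-terms).

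I expect the main obstacle to be the bookkeeping in the inductive step: one must verify (a) that the peeling map is well defined and that the coarser diagrams appearing as correction terms coincide exactly with those killed by the Proposition's column operations --- in particular that these operations at level $r$ restrict compatibly to level $r-1$, so that the telescoping over $r_1,r_2$ from $(r_1,r_2)$ down to $(t_1,t_2)$ is legitimate (a commuting-squares argument); and (b) the precise identification of the polynomial factor, above all the $x^2-x$ versus $x$ dichotomy, which is where the distinction between $\{e\}$-pairs and $\mathbb{Z}_2$-classes and the freeness of the $\mathbb{Z}_2$-action genuinely enter. The reduction to the active factor and the base-case count should be routine once the notation of Section~\ref{D2.11}--Notation~\ref{N2.15} is unwound.
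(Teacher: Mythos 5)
First, a caveat: the paper does not actually prove Proposition \ref{P2.16} --- it is quoted verbatim from the companion paper \cite{KP} (Proposition 3.27 there), and the supporting machinery (Lemmas 3.19, 3.20 and 3.23 of \cite{KP}) is likewise only cited. So your proposal cannot be checked against an in-paper argument; I can only judge it on its own terms and against the surrounding statements (Notation \ref{N2.15}, Theorem \ref{T2.17}). On those terms the overall shape is reasonable: localising to the active tensor factor $U^{l^1_f}_{l^1_f}, U^{l^2_f}_{l^2_f} \in \mathbb{J}^{2t_1+t_2}_{2t_1+t_2}$ and peeling one horizontal edge at a time, with $[x-(s_2+m)]$ arising as ``one loop minus the $s_2+m$ possible merges'' and $[x^2-x-2(s_1+j)]$ as its paired $\{e\}$-analogue, is exactly consistent with the diagonal-entry formula recorded in Theorem \ref{T2.17}.

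There is, however, a genuine gap in the inductive step. The entries this proposition concerns are those with $\sharp^p\bigl( U^{(d_i, P_i)}_{(d_i, P_i)} U^{(d_j, P_j)}_{(d_j, P_j)}\bigr) < 2s_1+s_2$, which by Definition \ref{D2.14} are identically \emph{zero} before any column operation is performed. Your recursion ``pre-elimination entry $= x\cdot(\text{entry for }\widehat U) - \text{merge terms}$'' is the recursion satisfied by the nonzero entries $x^{l(P_i\vee P_j)}$ (in particular by the diagonal entries), not by these zeros; as written, your induction computes the wrong quantity. For a zero entry the entire post-elimination value is $-\sum_{j'} c_{j'}\, a_{ij'}$, summed over the coarser columns $j'$ that have already been processed, so a correct proof must (1) identify exactly which coarser diagrams have a nonzero entry in row $i$, (2) use what those entries have already become after the earlier elimination steps, and (3) show the resulting signed sum telescopes to $(-1)^{t_1+t_2}\,(t_1)!\,(t_2)!\,2^{t_1}$ times the common tail product $\prod_{j=t_1}^{r_1-1}[x^2-x-2(s_1+j)]\prod_{m=t_2}^{r_2-1}[x-(s_2+m)]$. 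Your base-case paragraph asserts the count $t_1!\,t_2!\,2^{t_1}$ and the sign but supplies no mechanism for (1)--(3), and the reduction to the active factor (your claim that all relevant coarsenings happen inside that factor) is also left unargued. Until the inductive step is reformulated around the column-operation sum rather than a loop-counting recursion, the plan does not establish the proposition.
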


\begin{thm}\label{T2.17}\textbf{(\cite{KP}, Theorem 3.29)}
\begin{enumerate}
\item[(a)] Let $G'_{2s_1+s_2}$ be the matrix similar to the Gram matrix $G_{2s_1+s_2}$ of the algebra of $\mathbb{Z}_2$-relations which is obtained after the column operations and the corresponding row operations on $G_{2s_1+s_2}.$ Then

 \centerline{$G'_{2s_1 + s_2} = \left(\underset{\substack{0 \leq r_1 \leq k-s_1-s_2 \\ 0 \leq r_2 < k-s_1-s_2 \\ 2r_1+r_2 \leq 2k - 2s_1 - 2s_2}}{\bigoplus} A'_{2r_1+r_2, 2r_1+r_2} \right)$}

 \item[(b)]Let $\widetilde{G}'_{2s_1+s_2}$ be the matrix similar to the Gram matrix $\widetilde{G}_{2s_1+s_2}$ of signed partition algebras which is obtained after the column operations and the corresponding row operations on $\widetilde{G}_{2s_1+s_2}.$ Then

 \centerline{$\widetilde{G}'_{2s_1 + s_2} = \left(\underset{\substack{0 \leq r_1 \leq k-s_1-s_2-1 \\ 0 \leq r_2 < k-s_1-s_2-1 \\ 2r_1+r_2 \leq 2k - 2s_1 - 2s_2-1}}{\bigoplus} \widetilde{A}'_{2r_1+r_2, 2r_1+r_2} \right)\bigoplus \widetilde{A}'_{\lambda'}$}
\NI where
   \begin{enumerate}
   \item[(i)] the diagonal element of $A'_{2r_1+r_2, 2r_1+r_2}\left( \widetilde{A}'_{2r_1+r_2, 2r_1+r_2}\right)$ is given by

$\begin{array}{llll}
   1. & \underset{j=0}{\overset{r_1-1}{\prod}} [x^2-x-2(s_1+j)] \underset{m=0}{\overset{r_2-1}{\prod}} [x-(s_2+m)] &  if & r_1 \geq 1, r_2 \geq 1 \\
   2. & \underset{j=0}{\overset{r_1-1}{\prod}} [x^2-x-2(s_1+j)]  & if & r_2 = 0 \\
   3. & \underset{m=0}{\overset{r_2-1}{\prod}} [x-(s_2+m)] & if &  r_1 = 0
 \end{array}
$
     \item[(ii)] The entry $b_{ij}$ of the block matrix $A'_{2r_1 +r_2,2r_1+r_2}\left( \widetilde{A}'_{2r_1+r_2, 2r_1+r_2}\right)$ is replaced by

\hspace{-1cm}$\begin{array}{llll}
  1. & (-1)^{t_1 + t_2} \ 2^{t_1} \ (t_1)! \ (t_2)! \underset{j=0}{\overset{r_1-t_1-1}{\prod}} [x^2-x-2(s_1 + t_1+ j)] \underset{m=0}{\overset{r_2-t_2-1}{\prod}} [x-(s_2 + t_2 + m)]  & & \\
  &\hspace{9cm}\text{ if } r_1 \geq 1, r_2 \geq 1 & & \\
  2. & (-1)^{t_1 } \ 2^{t_1} \ (t_1)! \ \underset{j=0}{\overset{r_1-t_1-1}{\prod}} [x^2-x-2(s_1 + t_1+ j)] \ \ \ \ \ \ \ \ \text{ if } \ \ r_2 = 0 \\
  3. & (-1)^{t_2} \  (t_2)! \underset{m=0}{\overset{r_2-t_2-1}{\prod}} [x-(s_2 + t_2 + m)]  \ \ \ \ \hspace{2cm} \ \text{ if } \ \  r_1 = 0
 \end{array}
$

 \NI whenever $U^{(d_i, P_i)}_{(d_i, P_i)}$ and $U^{(d_j, P_j)}_{(d_j, P_j)}$ can be defined  as in Notation \ref{N2.15}(a), (b) and Proposition \ref{P2.16}(a), (b).

\item[(iii)] All other entries of the block matrix $A'_{2r_1+r_2, 2r_1+r_2}\left( \widetilde{A}'_{2r_1+r_2, 2r_1+r_2}\right)$ are zero.
   \end{enumerate}

  The underlying partitions of the diagrams corresponding to the entries of the block matrix $\widetilde{A}'_{2r_1+r_2, 2r_1+r_2}$ are $\lambda = [\lambda_1^2]^1 [2 \lambda_2]^2 [\lambda_3^2] [2 \lambda_4]^4$ with $\lambda_1^2 = \left( \lambda_{11}^2, \cdots, \lambda_{1s_1}^2\right), 2\lambda_2 = \left( 2 \lambda_{21}, \cdots, \lambda_{2s_2} \right), \lambda_{3}^2 = \left(\lambda^2_{31}, \cdots, \lambda^2_{3r_1} \right), 2 \lambda_{4} = \left( 2 \lambda_{41}, \cdots, 2 \lambda_{4r_2}\right)$ such that atleast one of $\lambda_{1i}, \lambda_{2j}, \lambda_{3f}, \lambda_{4m}$ is greater than $1$  for $1 \leq i \leq s_1, 1 \leq j \leq s_2, 1 \leq f \leq r_1$ and $1 \leq m\leq r_2.$


 \item[(b)']Let $\widetilde{A}'_{\lambda'}$ where  the partition $\lambda' = ([1^{s_1}]^1, [1^{s_2}]^2, [1^{r_1}]^3, [1^{r_2}]^4)$  such that $s_1+s_2+r_1+r_2 = k, r_1 > 0$
and $\widetilde{A}'_{\lambda'}$ is the block sub matrix corresponding to the diagrams whose underlying partition is $\lambda'.$

\begin{enumerate}
  \item[(i)] The $ii$-entry $x^{2r'_1+r'_2}$ of the matrix $\widetilde{A}'_{\lambda'}$  is replaced by

\centerline{$\underset{j=0}{\overset{r'_1-1}{\prod}} [x^2-x-2(s_1+j)] \underset{m=0}{\overset{r'_2-1}{\prod}} [x-(s_2+m)] + \underset{m=0}{\overset{k-s_1-s_2-1}{\prod}} [x-(s_2+m)]$}

\NI where $1 \leq r'_1 \leq k - s_1 - s_2 $ and  $r'_2 = k - s_1 - s_2 - r'_1.$
  \item[(ii)] The zero in the $ij$-entry  is replaced by

  \centerline{$(-1)^{t_1+t_2} \ 2^{t_1} \ (t_1)! \ (t_2)! \ \underset{j=0}{\overset{r'_1-t_1-1}{\prod}} [x^2-x-2(s_1+t_1+j)]  \underset{m=0}{\overset{r'_2-t_2-1}{\prod}} [x-(s_2+m+t_2)] + \underset{m=0}{\overset{k-s_1-s_2-1}{\prod}} [x-(s_2+m)]$}
  \NI where $U^{(\widetilde{d}_i, \widetilde{P}_i)}_{(\widetilde{d}_i, \widetilde{P}_i)}$ and $U^{(\widetilde{d}_j, \widetilde{P}_j)}_{(\widetilde{d}_j, \widetilde{P}_j)}$  are as in Notation \ref{N2.15}(b) and Proposition \ref{P2.16}(b) where $1 \leq i, j \leq 2k - 2s_1 - 2s_2$ and $i \neq j.$

  \item[(iii)] If $\sharp^p \left(U^{(\widetilde{d}_i, \widetilde{P}_i)}_{(\widetilde{d}_i, \widetilde{P}_i)} U^{(\widetilde{d}_j, \widetilde{P}_j)}_{(\widetilde{d}_j, \widetilde{P}_j)}\right) = 2s_1 + s_2$ then the  $ij$-entry is  replaced by

  \centerline{$ (-1)^{r_1 + r'_1} \ \underset{m=0}{\overset{k-s_1-s_2-1}{\prod}} [x-(s_2+m)]$}
where $U^{(\widetilde{d}_i, \widetilde{P}_i)}_{(\widetilde{d}_i, \widetilde{P}_i)} \in \mathbb{J}_{2s_1+s_2}^{2r'_1+k-s_1-s_2-r'_1}$ and $U^{(\widetilde{d}_j, \widetilde{P}_j)}_{(\widetilde{d}_j, \widetilde{P}_j)} \in \mathbb{J}_{2s_1+s_2}^{2r_1 + k - s_1-s_2 -r_1}$ where $1 \leq i, j \leq 2k - 2s_1 - 2s_2$ and $i \neq j.$

\NI All other entries of the block matrix $\widetilde{A}'_{\lambda'}$ are zero.
\end{enumerate}
\item[(c)] Let $G'_{s}$ be the matrix similar to the Gram matrix $G_{s}$ which is obtained after the column operations and the row operations on $G_{s}.$ Then

 \centerline{$G'_{s} = \left(\underset{0 \leq r \leq k-s}{\bigoplus} A'_{r, r} \right)$}
\NI where
   \begin{enumerate}
   \item[(i)] the diagonal element of $A'_{r, r}$ is given by

 \centerline{$ \underset{l=0}{\overset{r-1}{\prod}} [x-(s+l)]$}
     \item[(ii)] The entry $b_{ij}$ of the block matrix $A'_{r, r}$ is replaced by

     \centerline{$ (-1)^{t}  \ (t)! \  \underset{j=t}{\overset{r-1}{\prod}}  [x-(s  + l)] $} whenever $U^{R^{d_i}}_{R^{d_i}}$ and $U^{R^{d_j}}_{R^{d_j}}$ can be defined  as in  of Notation \ref{N2.15}(c) and Proposition \ref{P2.16}(c).

\item[(iii)] All other entries of the block matrix $A'_{r, r}$ are zero.
   \end{enumerate}

\end{enumerate}
\end{thm}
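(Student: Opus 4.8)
The plan is to derive all of (a), (b), (c) from Proposition~\ref{P2.16} by a single triangularisation scheme, treating the extra summand $\widetilde{A}'_{\lambda'}$ in the signed case as a boundary correction. First I would fix $s_1,s_2$ (resp.\ $s$) and arrange the rows and columns of the Gram matrix in the order of Definition~\ref{D2.13}, so that a diagram with fewer horizontal edges --- and, among those with the same value of $2r_1+r_2$, one with fewer horizontal edges counted as $r_1+r_2$ --- precedes the others. Writing $G_{2s_1+s_2}$ in the block form $\left(A_{2r_1+r_2,\,2r'_1+r'_2}\right)$ of Definition~\ref{D2.14}, the first observation is the vanishing pattern: if $U^{(d_i,P_i)}_{(d_i,P_i)}$ carries strictly more horizontal edges than $U^{(d_j,P_j)}_{(d_j,P_j)}$ can match in the middle row, then $\sharp^p\!\left(U^{(d_i,P_i)}_{(d_i,P_i)}U^{(d_j,P_j)}_{(d_j,P_j)}\right)<2s_1+s_2$ and that entry is already $0$. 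Thus, with respect to the coarsening order on the underlying set partitions of Definition~\ref{D2.11}, the change of basis that implements the prescribed column operations is unitriangular, with diagonal blocks the $A_{2r_1+r_2,2r_1+r_2}$; the same holds for $\widetilde{G}_{2s_1+s_2}$ and for $G_{s}$.

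Next I would run the column operations of Proposition~\ref{P2.16}: for the column indexed by $U^{(d_j,P_j)}_{(d_j,P_j)}$, subtract suitable scalar multiples of the columns indexed by the diagrams strictly coarser than it (those of strictly larger defect) so as to kill every entry lying strictly above the diagonal block of that column. Proposition~\ref{P2.16}(a)--(c) then \emph{is} the statement that the resulting $ij$-entry of $A'_{2r_1+r_2,2r_1+r_2}$ (resp.\ $\widetilde{A}'_{2r_1+r_2,2r_1+r_2}$, $A'_{r,r}$) is the displayed product of quadratic factors $x^2-x-2(s_1+\cdot)$ and linear factors $x-(s_2+\cdot)$ (resp.\ $x-(s+\cdot)$), shifted by the defect $2t_1+t_2$ (resp.\ $t$) of the common subdiagram $U^{l^1_f}_{l^1_f}=U^{l^2_f}_{l^2_f}$ of $U_i$ and $U_j$ supplied by Notation~\ref{N2.15}, with the sign and multiplicity factors $(-1)^{t_1+t_2}2^{t_1}(t_1)!\,(t_2)!$ (resp.\ $(-1)^{t}t!$). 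Specialising $U_i=U_j$ forces $t_1=t_2=0$ (resp.\ $t=0$) and yields the diagonal-element formulas in~(i); and if $U_i,U_j$ cannot be written in the form of Notation~\ref{N2.15}, the original entry is $0$ and every column used in the elimination has a $0$ in row $i$, so the entry stays $0$, which is the vanishing in~(iii). Performing the transpose (``corresponding'') row operations restores symmetry, clears the entries below the diagonal blocks, and leaves $G'_{2s_1+s_2}$, $\widetilde{G}'_{2s_1+s_2}$, $G'_{s}$ block diagonal with the blocks just computed; the description of the underlying partitions of $\widetilde{A}'_{2r_1+r_2,2r_1+r_2}$ is then read off from Definition~\ref{D2.11}, the surviving entries being exactly those indexed by partitions $\lambda=[\lambda_1^2]^1[2\lambda_2]^2[\lambda_3^2]^3[2\lambda_4]^4$ with at least one part $>1$, the all-ones partition $\lambda'$ being the unique exception.

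Finally, for the signed partition algebra at the boundary value $s_1+s_2+r_1+r_2=k$ with $r_1>0$, i.e.\ for $\lambda'=([1^{s_1}]^1,[1^{s_2}]^2,[1^{r_1}]^3,[1^{r_2}]^4)$, the defining inequalities of $\widetilde{I}^{2k}_{2s_1+s_2}$ in Definition~\ref{D2.8} forbid one class of diagrams that is available in the algebra of $\mathbb{Z}_2$-relations, so exactly one column that the elimination would have used is missing. I would track this deficiency through the column operations and show that it contributes the universal term $\prod_{m=0}^{k-s_1-s_2-1}[x-(s_2+m)]$ to the $ii$-entry and to each off-diagonal entry of this block, and that the surviving off-diagonal contributions acquire the sign $(-1)^{r_1+r'_1}$; this is precisely assertion (b$'$)(i)--(iii), and it forces $\widetilde{G}'_{2s_1+s_2}$ to pick up the extra summand $\widetilde{A}'_{\lambda'}$. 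I expect this last step --- identifying the missing column and verifying that its contribution is exactly that product, uniformly across the whole block --- to be the main obstacle; once Proposition~\ref{P2.16} is in hand, everything else is combinatorial bookkeeping on the coarsening order.
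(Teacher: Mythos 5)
This theorem is not proved in the present paper at all: it is imported verbatim from \cite{KP} (Theorem 3.29 there) as a preliminary, so there is no in-paper argument to compare your proposal against. Any assessment therefore has to be made against the ingredients the paper does quote, namely Definitions \ref{D2.11}--\ref{D2.14}, Notation \ref{N2.15} and Proposition \ref{P2.16}. Measured against those, your triangularisation scheme for parts (a), (c) and the generic blocks of (b) --- order the diagrams by Definition \ref{D2.13}, use the vanishing of entries with $\sharp^p < 2s_1+s_2$ to get a block-triangular change of basis, and read the surviving entries off Proposition \ref{P2.16} --- is the intended route, and the bookkeeping for (ii) and (iii) is sound.

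There are, however, two genuine gaps. First, Proposition \ref{P2.16} only describes the fate of entries that were \emph{originally zero}, i.e.\ those with $\sharp^p\left(U^{(d_i,P_i)}_{(d_i,P_i)}U^{(d_j,P_j)}_{(d_j,P_j)}\right) < 2s_1+s_2$, set up via Notation \ref{N2.15}; the diagonal entries $x^{2r_1+r_2}$ are not of that form, so your derivation of the diagonal formula in (i) by ``specialising $U_i=U_j$, which forces $t_1=t_2=0$'' is not licensed by the cited proposition. The conversion of $x^{2r_1+r_2}$ into the product of quadratic and linear factors is a separate inductive computation (resting on the recursions of Lemma 3.20 of \cite{KP}) that you do not supply. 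Second, and more seriously, part (b$'$) --- the entire content of the extra summand $\widetilde{A}'_{\lambda'}$, namely the uniform additive term $\prod_{m=0}^{k-s_1-s_2-1}[x-(s_2+m)]$ in every entry of that block and the sign $(-1)^{r_1+r'_1}$ on the off-diagonal entries with $\sharp^p = 2s_1+s_2$ --- is precisely the step you defer (``I would track this deficiency\ldots''). Identifying which columns are forbidden by the inequalities defining $\widetilde{I}^{2k}_{2s_1+s_2}$ in Definition \ref{D2.8} and showing that the resulting failure of cancellation produces exactly that product, uniformly across the whole block, is the substantive new content of the theorem in the signed case; until that is carried out, the claimed decomposition of $\widetilde{G}'_{2s_1+s_2}$ with the extra summand is asserted rather than proved.
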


\section{\textbf{Diagram Matrix}}

In this section, we introduce the symmetric diagram matrices $A^{s+r, s}$ of size ${_{(s+r)}}C_s$ based on the diagrams. By induction, we compute the eigenvalues of the symmetric diagram matrices $A^{s+r, s}$ using elementary row and column operations.





\begin{defn}\label{D3.2}
Fix $s$ and $r.$ Choose a $d^+ \in R_k$ such that $d^+$ has $s+r$-connected components where $R_k$ denote the set of all equivalence relations on $\underline{k} = \{1, 2, \cdots, k\}.$

We shall draw a diagram $d$ graphically using the graph $d^+$  with $s$ through classes as follows:
\begin{enumerate}
  \item[(i)] Draw $d^+$ in the top row and a copy of $d^+$ denoted by $d^-$ in the bottom row.
  \item[(ii)] Among the $s+r$ connected components in the top row, choose $s$ connected components and join each connected component with the respective connected component in the bottom row by vertical edges.
  \item[(iii)] The resultant diagram is denoted by $d.$
\end{enumerate}
\end{defn}

\begin{defn}\label{D3.3}
Let $\Omega^{s+r, s}$ denote the collection of all diagrams with $s$ through classes mentioned as above in Definition \ref{D3.2} and the number of such diagrams are denoted by  $\left| \Omega^{s+r, s}\right| = {_{(s+r)}}C_s.$
\end{defn}

\begin{ex}\label{E3.4}
Let $s+r = 9$ and $\rho = \left( \{1,2,3\}, \{4, 5\}, \{6,7\}, \{8\}, \{9\}\right).$ The diagrams with three through classes corresponding to the set partition $\rho$ are
\begin{center}
\includegraphics{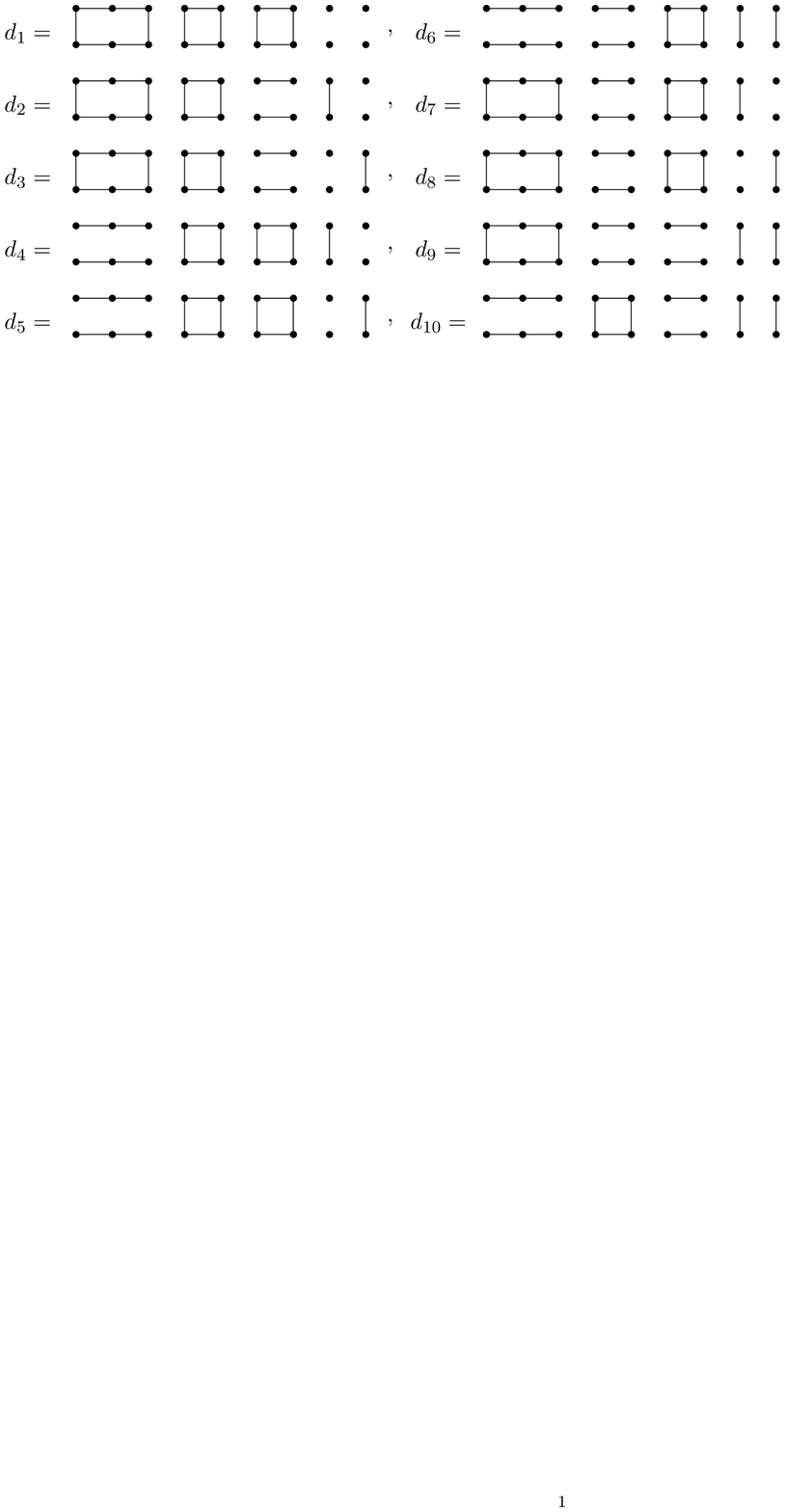}
\end{center}
\end{ex}

\begin{defn}\label{D3.5}
Fix $s$ and $r.$ Consider the following variables $\{x_0, x_1, \cdots, x_{\text{min}\{s, r\}}\}.$ Define a matrix $A^{s+r, s}$ of size ${_{(s+r)}}C_s$ with the entries $\{x_0, x_1, \cdots, x_{\text{min}\{s,r\}}\}$ as follows:

\centerline{$A^{s+r, s} = \left( a_{ij}\right)_{{_{(s+r)}}C_s \times {_{(s+r)}}C_s}$ with $a_{ij} = x_{\text{min}\{s,r\}-f}$}
\NI where $f$ denotes the number of horizontal edges in $d_i$ which are replaced by through classes in $d_j$ and vice versa, $d_i, d_j \in \Omega^{s+r, s}.$
\end{defn}

\begin{rem}\label{R3.6}
For the sake of convenience, we shall replace every through class in the above mentioned diagram by a through class $\left( \ \big | \ \right)$  obtained by joining a single vertex in the top row and the corresponding vertex in the bottom row. Similarly, every horizontal edge in the top and bottom row is replaced by a vertex $( \centerdot )$ respectively.
\end{rem}

\begin{lem}\label{L3.7}
The matrix $A^{s+r, s}$ is symmetric.
\end{lem}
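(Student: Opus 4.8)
The plan is to unwind the combinatorial definition of the entry $a_{ij}$ and to exhibit the integer $f$ occurring in it as a symmetric function of the unordered pair $\{d_i,d_j\}$.

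First I would record the bijective description of $\Omega^{s+r,s}$ coming from Definition \ref{D3.2}. Every diagram in $\Omega^{s+r,s}$ is drawn on the \emph{same} fixed graph $d^+$ (with $d^-$ a copy of $d^+$), and $d^+$ has $s+r$ connected components; so a diagram $d\in\Omega^{s+r,s}$ is completely determined by the choice of which $s$ of these $s+r$ components are joined to the bottom row to become through classes. Writing $\cC$ for the set of $s+r$ connected components of $d^+$, I thus associate to each $d_i$ the subset $S_i\subseteq\cC$ of its through classes, with $|S_i|=s$, and its complement $T_i:=\cC\setminus S_i$, the set of horizontal edges of $d_i$, with $|T_i|=r$. (By Remark \ref{R3.6} one may picture a through class as $\big|$ and a horizontal edge as $\centerdot$, but this cosmetic normalization is irrelevant to the count below.)

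Next I would identify $f$ in this language. By Definition \ref{D3.5}, $f$ is the number of components that are horizontal edges in $d_i$ but through classes in $d_j$, that is, $f=|T_i\cap S_j|$. Now
\[
|T_i\cap S_j|=|S_j\setminus S_i|=|S_j|-|S_i\cap S_j|=s-|S_i\cap S_j|,
\]
and the right-hand side is manifestly unchanged under $i\leftrightarrow j$. Running the same computation with the roles of $i$ and $j$ interchanged gives $|T_j\cap S_i|=s-|S_i\cap S_j|$ as well, so $|T_i\cap S_j|=|T_j\cap S_i|$; this is exactly the ``and vice versa'' clause in the definition, and it simultaneously shows that $f=f(d_i,d_j)$ is well defined and that $f(d_i,d_j)=f(d_j,d_i)$. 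Consequently $a_{ij}=x_{\min\{s,r\}-f}$ depends on $(i,j)$ only through the symmetric quantity $f$, whence $a_{ij}=a_{ji}$ for all $i,j$, i.e. $A^{s+r,s}$ is symmetric.

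There is essentially no obstacle in this argument; the only point requiring care is the correct reading of the definition of $f$ and the verification that its two ``directions'' agree, which is the displayed identity above. As a side remark that I might include for completeness, the bounds $0\le|S_i\cap S_j|\le s$ together with $|S_i\cap S_j|\ge|S_i|+|S_j|-|\cC|=s-r$ give $0\le f\le\min\{s,r\}$, so the subscript $\min\{s,r\}-f$ always lies in $\{0,1,\dots,\min\{s,r\}\}$ and $a_{ij}$ is indeed one of the admissible variables $x_0,\dots,x_{\min\{s,r\}}$.
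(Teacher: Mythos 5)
Your proof is correct and rests on the same underlying observation as the paper's one-line proof, namely that each diagram in $\Omega^{s+r,s}$ is determined by its top row alone (equivalently, by the $s$-subset $S_i$ of components chosen as through classes), so that swapping $d_i$ and $d_j$ cannot change the entry. The paper simply asserts this ("the top and bottom rows of the diagrams are the same"), whereas you make the content explicit by computing $f=|T_i\cap S_j|=s-|S_i\cap S_j|=|T_j\cap S_i|$; this also settles the well-definedness of the "and vice versa" clause and, via your closing remark, confirms that the subscript $\min\{s,r\}-f$ stays in the admissible range, details the paper leaves unstated.
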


\begin{proof}
Since the top and bottom row of the diagrams in $\Omega^{s+r, s}$ are the same, the matrix $A^{s+r, s}$ is symmetric.
\end{proof}

\begin{defn}\label{D3.8}
The matrix $A^{s+r, s}$ is called as \textbf{symmetric diagram matrix}.
\end{defn}

\begin{rem}
The number of times $x_{\text{min}\{s, r\}-t}$ occurs in any row or column of a symmetric diagram matrix $A^{s+r, s}$ of size ${_{(s+r)}}C_s$ is given by ${_s}C_t \ {_r}C_t.$
\end{rem}

The eigenvalues of the symmetric diagram matrix $A^{s+r, s}$ can be obtained using induction. The main theorem of this paper gives all the distinct eigenvalues of the symmetric diagram matrix $A^{s+r, s}.$
\subsection{Main Theorem}
\begin{thm}\label{T3.9}
The set of all distinct eigenvalues of the symmetric diagram matrix $A^{s+r, s}$ of size ${_{(s+r)}}C_s$ with entries $\{x_0, x_1, \cdots, x_{\text{min}\{s, r\}}\}$ are given by

\begin{equation}\label{E3.1}
\underset{t=0}{\overset{\text{min}\{s,r\}}{\sum}}\left[ \underset{j=0}{\overset{l}{\sum}} (-1)^j \ {_l}C_j \ {_{(s-l)}}C_{(t-j)} \ {_{(r-l)}}C_{(t-j)}\right] x_{\text{min}\{s,r\}-t}
\end{equation}
\NI for all $0 \leq l \leq \text{min }\{s,r\}.$
\end{thm}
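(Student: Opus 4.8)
The plan is to diagonalize $A^{s+r,s}$ by recognizing it as the matrix of a natural linear operator on a space with a rich combinatorial symmetry, and then to compute eigenvalues by the method of ``test eigenvectors'' built from the lattice of through-class configurations. Concretely, identify $\Omega^{s+r,s}$ with the set of $s$-element subsets $S$ (the chosen through classes) of a fixed $(s+r)$-element set $\Lambda = \{1,\dots,s+r\}$; then $a_{S,T} = x_{\min\{s,r\}-f}$ where $f = |S \triangle T|/2$ is determined by $|S\cap T| = s-f$. Thus $A^{s+r,s}$ is a function of $|S\cap T|$ alone, i.e. it lies in the Bose--Mesner algebra of the Johnson scheme $J(s+r,s)$. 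The eigenvalues of this scheme are classical, indexed by $l=0,1,\dots,\min\{s,r\}$, with the eigenspaces being the irreducible $\mathfrak{S}_{s+r}$-summands of the permutation module on $s$-subsets. The proof will amount to writing $A^{s+r,s} = \sum_{t} x_{\min\{s,r\}-t} D_t$, where $D_t$ is the adjacency matrix for the relation $|S\cap T| = s-t$, and invoking (or re-deriving) the eigenvalue formula for $D_t$ on the $l$-th eigenspace, then summing against the coefficients $x_{\min\{s,r\}-t}$.

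First I would make the dictionary precise and verify the remark that $x_{\min\{s,r\}-t}$ occurs ${_s}C_t\,{_r}C_t$ times in each row: given $S$, the number of $T$ with $|S\cap T| = s-t$ is (choose which $t$ elements of $S$ to drop)$\times$(choose which $t$ elements of $\Lambda\setminus S$ to add) $= {_s}C_t\,{_r}C_t$. Next I would introduce the standard $\mathfrak{S}_{s+r}$-module decomposition of $\mathbb{C}^{\Omega^{s+r,s}}$ into $V_0 \oplus V_1 \oplus \cdots \oplus V_{\min\{s,r\}}$, where $V_l$ is the Specht-type component (kernel of the ``down'' map iterated appropriately). Each $D_t$ acts as a scalar $p_t(l)$ on $V_l$; these $p_t(l)$ are the Eberlein polynomials / Hahn-polynomial values for the Johnson scheme. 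The eigenvalue of $A^{s+r,s}$ on $V_l$ is then $\sum_{t=0}^{\min\{s,r\}} p_t(l)\, x_{\min\{s,r\}-t}$, and the claim reduces to the identity $p_t(l) = \sum_{j=0}^{l}(-1)^j\,{_l}C_j\,{_{(s-l)}}C_{(t-j)}\,{_{(r-l)}}C_{(t-j)}$.

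In keeping with the paper's stated method, however, I would present this inductively and via explicit row/column operations rather than citing association-scheme theory. The induction is on $\min\{s,r\}$ (equivalently on $l$): for the base case $l=0$ the eigenvector is the all-ones vector, with eigenvalue $\sum_t {_s}C_t\,{_r}C_t\, x_{\min\{s,r\}-t}$, matching the formula since the bracket collapses to ${_s}C_t\,{_r}C_t$. For the inductive step, I would exhibit an eigenvector supported on the difference of two ``sub-diagram-matrix'' blocks obtained by fixing one element of $\Lambda$ to lie in (resp. not lie in) the chosen set $S$; restricting $A^{s+r,s}$ to such a block, after the elementary row and column operations of the type used in Proposition~\ref{P2.16}, reproduces a symmetric diagram matrix of strictly smaller parameters, so the smaller-case eigenvalues are available by induction. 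The alternating sum $\sum_{j}(-1)^j\,{_l}C_j(\cdots)$ will emerge from iterating this ``inclusion--exclusion on $l$ distinguished elements'' construction, the sign $(-1)^j$ coming from the $j$-fold difference. The binomial coefficients ${_{(s-l)}}C_{(t-j)}\,{_{(r-l)}}C_{(t-j)}$ are exactly the multiplicities in the reduced matrix on $s-l$ free through-slots and $r-l$ free horizontal slots.

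The main obstacle will be the bookkeeping in the inductive step: making rigorous the claim that the relevant combination of blocks is genuinely an invariant subspace and that the induced action is again a symmetric diagram matrix (of size ${_{(s+r-l)}}C_{(s-l)}$ with the same variable list truncated), and then carrying the alternating-sum coefficient correctly through the recursion. This requires a careful choice of which linear combinations of ``fix element $i$ in/out'' projections to use so that the cross terms between the $s$-side and $r$-side of the symmetric difference cancel to leave precisely ${_{(s-l)}}C_{(t-j)}\,{_{(r-l)}}C_{(t-j)}$. Once that reduction lemma is in place the eigenvalue formula \eqref{E3.1} follows by a direct summation, and distinctness of the $\min\{s,r\}+1$ listed values (for generic $x_i$) is automatic since the coefficient matrix $\big[\sum_j(-1)^j{_l}C_j{_{(s-l)}}C_{(t-j)}{_{(r-l)}}C_{(t-j)}\big]_{l,t}$ is triangular with nonzero diagonal in the variable $x_0$.
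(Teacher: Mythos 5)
Your proposal is correct, but it reaches the result by a genuinely different route from the paper. You identify $\Omega^{s+r,s}$ with the $s$-subsets of an $(s+r)$-set and observe that $a_{S,T}$ depends only on $|S\cap T|$, so that $A^{s+r,s}=\sum_t x_{\min\{s,r\}-t}D_t$ lies in the Bose--Mesner algebra of the Johnson scheme $J(s+r,s)$; the bracketed coefficient in \eqref{E3.1} is then exactly the Eberlein polynomial value $E_t(l)=\sum_{j}(-1)^j\binom{l}{j}\binom{s-l}{t-j}\binom{r-l}{t-j}$, and the common eigenspaces $V_0\oplus\cdots\oplus V_{\min\{s,r\}}$ are the irreducible constituents of the permutation module of the symmetric group on $s+r$ letters acting on $s$-subsets. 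This is a complete and rigorous argument once the classical eigenvalue formula for the Johnson scheme is cited, and it buys more than the paper's proof does: it gives the multiplicities $\binom{s+r}{l}-\binom{s+r}{l-1}$ for free, it makes the count of $\min\{s,r\}+1$ distinct eigenvalues transparent (the first eigenmatrix of an association scheme is invertible), and it requires no bookkeeping of block decompositions. The paper instead proceeds purely by elementary row and column operations: it splits $\Omega^{s+r,s}$ according to whether the first and last components are through classes or horizontal edges, subtracts paired rows to produce a smaller symmetric diagram matrix of size $\binom{s+r-2}{s-1}$ in the variables $y_i=x_{i+1}-x_i$, applies induction to that block, and then argues (at considerable notational cost, tracking families $\Omega_{I_i,J_i}$ with $3^{j}$ indices) that the complementary blocks contribute no new eigenvalues. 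Your third paragraph sketches essentially this same reduction, and you correctly flag its main difficulty --- verifying that the complementary blocks are again (conjugate to) symmetric diagram matrices of smaller parameters --- which is precisely where the paper's proof expends most of its effort. If you pursue the elementary version, that reduction lemma is the one piece you must still supply in detail; if you pursue the association-scheme version, nothing essential is missing.
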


\begin{lem}\label{L3.10}
If $a_0, a_1, \cdots, a_r$ are $(r+1)$-variables and if $a_t^l = \underset{j=0}{\overset{l}{\sum}} (-1)^j \ {_{l}}C_j \ a_{t-j}$ then

\centerline{$a_t^{l+1} = a_t^l - a_{t-1}^l.$}
\end{lem}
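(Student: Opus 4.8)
The plan is to prove the identity directly from the definition, with no induction needed, by invoking Pascal's rule ${_{(l+1)}}C_j = {_l}C_j + {_l}C_{j-1}$. Throughout I adopt the usual convention ${_l}C_m = 0$ for $m < 0$ or $m > l$, and I tacitly assume the indices occurring are in range (i.e. $a_{t-j}$ is defined for $0 \le j \le l+1$, which holds in every application of the lemma, where $t-l-1 \ge 0$).

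First I would expand
\[
a_t^{l+1} = \underset{j=0}{\overset{l+1}{\sum}} (-1)^j \ {_{(l+1)}}C_j \ a_{t-j}
\]
and substitute ${_{(l+1)}}C_j = {_l}C_j + {_l}C_{j-1}$, splitting the sum as
\[
a_t^{l+1} = \underset{j=0}{\overset{l+1}{\sum}} (-1)^j \ {_l}C_j \ a_{t-j} \ + \ \underset{j=0}{\overset{l+1}{\sum}} (-1)^j \ {_l}C_{j-1} \ a_{t-j}.
\]

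Next I would simplify each piece. In the first sum the $j = l+1$ term vanishes because ${_l}C_{l+1} = 0$, so the upper limit drops to $l$ and the first sum equals $\underset{j=0}{\overset{l}{\sum}}(-1)^j \ {_l}C_j \ a_{t-j} = a_t^l$. In the second sum the $j = 0$ term vanishes because ${_l}C_{-1} = 0$; setting $i = j-1$ rewrites it as $\underset{i=0}{\overset{l}{\sum}} (-1)^{i+1} \ {_l}C_i \ a_{(t-1)-i} = -\underset{i=0}{\overset{l}{\sum}}(-1)^i \ {_l}C_i \ a_{(t-1)-i} = -a_{t-1}^l$. Adding the two pieces gives $a_t^{l+1} = a_t^l - a_{t-1}^l$, as claimed.

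There is no genuine obstacle here; the only point requiring care is the index bookkeeping at the ends of the summation ranges, which the zero-convention on out-of-range binomial coefficients disposes of immediately. This recursion is precisely what is needed to drive the induction on $l$ in the proof of Theorem \ref{T3.9}: it shows that the coefficient arrays appearing in \eqref{E3.1} satisfy a Pascal-type difference relation in the parameter $l$.
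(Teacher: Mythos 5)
Your proof is correct and is essentially the paper's own argument run in the opposite direction: the paper starts from $a_t^l - a_{t-1}^l$, shifts the index in the second sum, and recombines via Pascal's rule to recover $a_t^{l+1}$, whereas you expand $a_t^{l+1}$ via Pascal's rule and split; the decomposition, the key identity, and the index bookkeeping are identical. No further comment is needed.
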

\begin{proof}
Consider
\begin{eqnarray*}
  a_t^l - a_{t-1}^l &=& \underset{j=0}{\overset{l}{\sum}} (-1)^j \ {_{l}}C_j \ a_{t-j} - \underset{j=0}{\overset{l}{\sum}} (-1)^j \ {_{l}}C_j \ a_{t-1-j} \\
   &=& \underset{j=0}{\overset{l}{\sum}} (-1)^j \ {_{l}}C_j \ a_{t-j} - \underset{j=1}{\overset{l+1}{\sum}} (-1)^{j'-1} \ {_{l}}C_{j'-1} \ a_{t-j'} \\
   &=& a_t + \underset{j=1}{\overset{l}{\sum}} (-1)^j \ \left[ {_{l}}C_j + {_l}C_{j-1} \right] \ a_{t-j} + (-1)^{l + 1} \  a_{t-(l+1)} \\
   &=& \underset{j=0}{\overset{l+1}{\sum}} (-1)^j \  {_{(l+1)}}C_j  \ a_{t-j} \\
   &=&  a_t^{l+1}
\end{eqnarray*}
\end{proof}

\begin{cor}\label{C3.11}
In particular, if $a_i = {_{(s-l)}}C_i \ {_{(r-l)}}C_i$ for $0 \leq i \leq \text{ min }\{s-l, r-l\}$ then  $a_t^{l+1} = \underset{j=0}{\overset{l+1}{\sum}}(-1)^j \ {_{l+1}}C_j \ {_{(s-(l+1))}}C_{(t-j)} \ {_{(r-(l+1))}}C_{(t-j)}.$
\end{cor}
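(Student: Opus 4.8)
The natural route is to feed the explicit sequence $a_i={}_{(s-l)}C_i\,{}_{(r-l)}C_i$ into Lemma~\ref{L3.10}. For this sequence the definition of $a_t^l$ in Lemma~\ref{L3.10} reads $a_t^l=\sum_{j=0}^{l}(-1)^j\,{}_lC_j\,{}_{(s-l)}C_{t-j}\,{}_{(r-l)}C_{t-j}$, and $a_{t-1}^l$ is the same sum with $t$ replaced by $t-1$; applying the recursion $a_t^{l+1}=a_t^l-a_{t-1}^l$ of Lemma~\ref{L3.10} (equivalently, expanding $a_t^{l+1}=\sum_{j=0}^{l+1}(-1)^j\,{}_{(l+1)}C_j\,{}_{(s-l)}C_{t-j}\,{}_{(r-l)}C_{t-j}$) reduces the statement to a single binomial identity: inside this $(l+1)$-fold alternating sum one must be able to replace the pair $(s-l,\,r-l)$ by $(s-l-1,\,r-l-1)$.

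To establish that identity I would induct on $l$. The base case $l=0$ is the one-step statement just displayed. For the inductive step I would rewrite each binomial factor by Pascal's rule, ${}_{(s-l)}C_{t-j}={}_{(s-l-1)}C_{t-j}+{}_{(s-l-1)}C_{t-j-1}$ and likewise for the $r$-factor, expand the product into four alternating sums, and in the three ``shifted'' sums re-index $j\mapsto j-1$ so that the weights recombine through ${}_{(l+1)}C_j+{}_{(l+1)}C_{j-1}={}_{(l+2)}C_j$ --- exactly the manipulation already carried out in the proof of Lemma~\ref{L3.10}. The aim is to show that after this regrouping the only surviving contribution is $\sum_{j}(-1)^j\,{}_{(l+1)}C_j\,{}_{(s-l-1)}C_{t-j}\,{}_{(r-l-1)}C_{t-j}$. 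Phrased through the backward-difference operator $\nabla b_t=b_t-b_{t-1}$, this says $(\nabla^{l+1}a)(t)=(\nabla^{l+1}\widetilde a)(t)$ with $\widetilde a_i={}_{(s-l-1)}C_i\,{}_{(r-l-1)}C_i$. Comparing the outcome with equation~\eqref{E3.1}, one sees that the right-hand side is precisely the coefficient of $x_{\min\{s,r\}-t}$ attached to the index $l+1$ in the eigenvalue formula of Theorem~\ref{T3.9}, which is how the Corollary is meant to be used in that proof.

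The step I expect to be the real obstacle is the boundary bookkeeping. The sum $a_t^{l+1}$ is genuinely supported only while $0\le t-j\le\min\{s-l,r-l\}$; after the Pascal substitutions some terms formally carry ${}_{(s-l-1)}C_{t-j-1}$ or ${}_{(r-l-1)}C_{t-j-1}$ with the lower index sitting at, or just past, the edge of the admissible range, and the re-indexing $j\mapsto j-1$ threatens to drop the $j=0$ or $j=l+1$ endpoint. I would keep every binomial coefficient in its fully extended sense (zero whenever the lower index is negative or exceeds the upper one), check that each boundary term I discard is genuinely zero so that this convention is internally consistent, and only at the very end restrict to the range $0\le i\le\min\{s-l,r-l\}$ assumed in the hypothesis. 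Once that is in place, the identity --- and with it the Corollary --- follows.
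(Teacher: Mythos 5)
The identity you reduce the Corollary to is false, so the induction you outline cannot close. Taking the printed statement at face value, as you do, $a_t^{l+1}$ for the sequence $a_i={}_{(s-l)}C_i\,{}_{(r-l)}C_i$ equals, by Lemma \ref{L3.10}, $\sum_{j=0}^{l+1}(-1)^j\,{}_{(l+1)}C_j\,{}_{(s-l)}C_{t-j}\,{}_{(r-l)}C_{t-j}$, and the Corollary would then be asserting that this equals the same alternating sum with $(s-l,\,r-l)$ replaced by $(s-l-1,\,r-l-1)$. Take $l=0$, $s=r=2$, $t=1$: the first sum is ${}_{2}C_1\,{}_{2}C_1-{}_{2}C_0\,{}_{2}C_0=3$, while the second is ${}_{1}C_1\,{}_{1}C_1-{}_{1}C_0\,{}_{1}C_0=0$. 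This is also why the Pascal-rule regrouping you propose will not come out: after the re-indexing $j\mapsto j-1$ the factor $(-1)^j$ flips sign, so the weights combine as ${}_{(l+1)}C_j-{}_{(l+1)}C_{j-1}$ rather than ${}_{(l+1)}C_j+{}_{(l+1)}C_{j-1}={}_{(l+2)}C_j$, and the two cross terms ${}_{(s-l-1)}C_{t-j}\,{}_{(r-l-1)}C_{t-j-1}$ and ${}_{(s-l-1)}C_{t-j-1}\,{}_{(r-l-1)}C_{t-j}$ do not cancel. The obstacle is not boundary bookkeeping; no amount of care with the extended binomial convention rescues an identity that fails at an interior point of the admissible range.

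What has gone wrong is an index shift in the printed hypothesis, not in your reading of it. The statement that the proof of Theorem \ref{T3.9} actually needs is: if $a_i={}_{(s-(l+1))}C_i\,{}_{(r-(l+1))}C_i$ --- the sequence attached to the smaller matrix $A^{(s-1)+(r-1),\,s-1}$, whose parameters at level $l$ are $(s-1)-l$ and $(r-1)-l$ --- then $a_t^{l+1}=\sum_{j=0}^{l+1}(-1)^j\,{}_{(l+1)}C_j\,{}_{(s-(l+1))}C_{t-j}\,{}_{(r-(l+1))}C_{t-j}$. With that hypothesis the conclusion is immediate: it is literally the definition of $a_t^{l+1}$ in Lemma \ref{L3.10}, or equivalently one application of the recursion $a_t^{l+1}=a_t^l-a_{t-1}^l$ to the level-$l$ coefficients of the smaller matrix, which is exactly how Step I of the main theorem uses it; the paper's own proof is precisely this one-line appeal to Lemma \ref{L3.10} and induction. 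No identity trading $(s-l,\,r-l)$ for $(s-l-1,\,r-l-1)$ is needed, and none is available. You can see the corrected relation numerically: the first differences of ${}_{1}C_t\,{}_{1}C_t=(1,1)$ are $(1,0,-1)$, which is the coefficient sequence $\sum_j(-1)^j\,{}_{1}C_j\,{}_{1}C_{t-j}\,{}_{1}C_{t-j}$ of the $l=1$ eigenvalue of $A^{4,2}$, whereas the first differences of ${}_{2}C_t\,{}_{2}C_t=(1,4,1)$ are $(1,3,-3)$, which is not. Your proposal should be replaced by: restate the hypothesis with $s-(l+1)$, $r-(l+1)$ and invoke Lemma \ref{L3.10} directly.
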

\begin{proof}
The proof follows from induction and Lemma \ref{L3.10}.
\end{proof}
\subsection{Arrangement of Diagrams in $\Omega^{s+r, s}$}

\begin{defn}\label{D3.12}

Let $I, J \subset \{1, 2, \cdots, s+r\}$ with $I \cap J = \phi.$ Define,

$\Omega_{I, J} = \{ d \in \Omega^{s+r, s} \ | \ \text{ the } i^{\text{th}} \text{ connected component of } d \text{ is a through class } \left( \  | \ \right) $ for all $i \in I$ and the $j^{\text{th}}$ connected component of $d$ consists of two dots for all $ j \in J\}.$

In particular, if $I = \{1\}$ and $J = \{s+r\}$ then

$\Omega_{\{1\}, \{s+r\}} = \{ d \in \Omega^{s+r, s} \ | \ $ the first connected component of $d$ is a through class $\left( \ | \ \right)$ and the $(s+r)^{\text{th}}$ connected component consists of two dots $\}.$

\end{defn}

\begin{lem}\label{L3.13}
Let $\Omega_{I, J}$ be as in Definition \ref{D3.12}. Then

\centerline{$\left| \Omega_{I, J}\right| = {_{\left( s+r - |I| - |J|\right)}}C_{s - |I|}.$}

In particular,

\centerline{$\left| \Omega_{\{1\}, \{s+r\}}\right| = {_{(s+r-2)}}C_{(s-1)}.$}
\end{lem}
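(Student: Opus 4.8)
The plan is to prove both formulae by a direct counting argument. Fix a diagram $d \in \Omega^{s+r,s}$; such a diagram is determined by its underlying set partition $d^+$ into $s+r$ connected components together with a choice of which $s$ of these components are through classes (equivalently, which $r$ are horizontal edges). Since all diagrams in $\Omega_{I,J}$ share a common $d^+$, the choice of $d^+$ is irrelevant, and a diagram in $\Omega_{I,J}$ is specified precisely by a subset $S \subseteq \{1,\dots,s+r\}$ with $|S| = s$ (the indices of the through classes) subject to the constraints $I \subseteq S$ and $J \cap S = \emptyset$ (coming from Definition \ref{D3.12}: the components indexed by $I$ must be through classes, those indexed by $J$ must be pairs of dots). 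Thus $\left|\Omega_{I,J}\right|$ equals the number of such subsets $S$.

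The counting then proceeds as follows. The $|I|$ forced through-class positions and the $|J|$ forced dot positions are removed from consideration; the remaining $s+r - |I| - |J|$ positions are free, and among them we must choose the remaining $s - |I|$ positions that will be through classes (the rest automatically becoming horizontal edges, and one checks $r - |J| = (s+r-|I|-|J|) - (s - |I|)$ so the count is consistent). Hence
\[
\left|\Omega_{I,J}\right| = {}_{(s+r-|I|-|J|)}C_{(s-|I|)},
\]
which is the first assertion. For the second, one specializes $I = \{1\}$, $J = \{s+r\}$, so $|I| = |J| = 1$, giving $\left|\Omega_{\{1\},\{s+r\}}\right| = {}_{(s+r-2)}C_{(s-1)}$.

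I do not expect any serious obstacle here: the only point requiring a moment's care is confirming that the constraints $I \subseteq S$ and $J \cap S = \emptyset$ are independent (which holds because $I \cap J = \emptyset$ is assumed) and that the residual binomial coefficient is well-defined, i.e. $0 \le s - |I| \le s+r-|I|-|J|$, which follows from $|I| \le s$ and $|J| \le r$ for any nonempty $\Omega_{I,J}$ (if these fail the set is empty and the binomial coefficient is conventionally zero, so the identity still holds). The argument is essentially the observation that $\Omega^{s+r,s}$ is in bijection with the $s$-subsets of an $(s+r)$-set and that $\Omega_{I,J}$ corresponds to those $s$-subsets containing a fixed set and avoiding a fixed disjoint set.
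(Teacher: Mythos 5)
Your proof is correct and is essentially the argument the paper intends: the paper's own proof is the single line ``follows from the definition of $\Omega_{I,J}$,'' and your counting of $s$-subsets of $\{1,\dots,s+r\}$ containing $I$ and avoiding $J$ is exactly the routine detail being omitted. Nothing further is needed.
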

\begin{proof}
The proof follows from the Definition of $\Omega_{I, J}.$
\end{proof}

We shall arrange the diagrams in $\Omega^{s+r, s}$ as follows:

\begin{enumerate}
  \item[(i)]  The collection of diagrams whose  first connected component is a through class and the last connected component is a horizontal edge is denoted by $\Omega_{\{1\}, \{s+r\}}$. Also, $\left| \Omega_{\{1\}, \{s+r\}} \right| = {_{(s+r-2)}}C_{(s-1)}$ and the diagrams in  $\Omega_{\{1\},\{s+r\}}$  look like

      \begin{center}
      \includegraphics{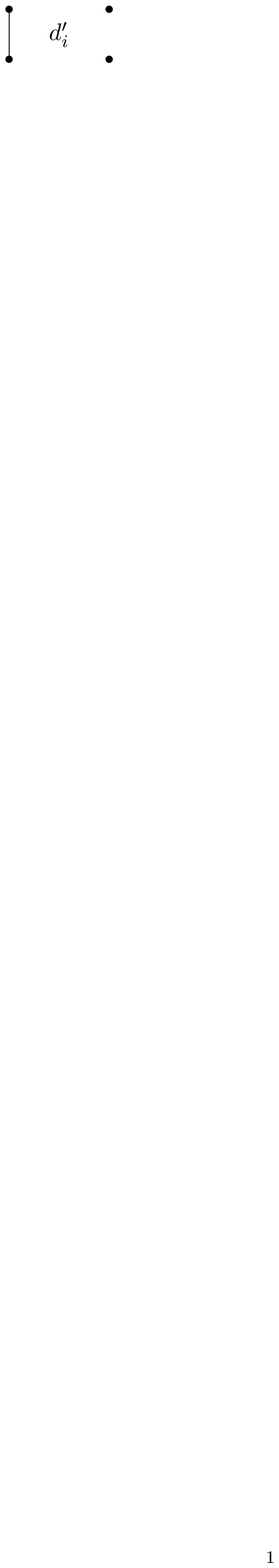}
      \end{center}

      The diagrams in $\Omega_{\{1\}, \{s+r\}}$ are indexed inductively as follows:

     Suppose $d'_i, 1 \leq i \leq {_{(s+r-2)}}C_{(s-1)}$ is the $i^{\text{th}}$ diagram in $\Omega^{s+r-2, s-1}$, and which is the the subdiagram of $d_i$  lying inbetween the first and last connected component will be the $i^{\text{th}}$ diagram in $\Omega_{\{1\}, \{s+r\}}$ inductively.

 \item[(ii)] For $1 \leq i \leq {_{(s+r-2)}}C_{(s-1)},$ let $d_i \in \Omega_{\{1\}, \{s+r\}}$ then $d_{i^{\ast}}$ is the diagram same as $d_i$ except they differ at the first and last connected component of $d_i$ where the first connected component of $d_{i^{\ast}}$ is a horizontal edge and the last connected component is a through class. Collection of such diagrams is denoted by $\Omega_{\{s+r\},\{1\}}.$ The diagram $d_{i^{\ast}}$ will look like

      \begin{center}
      \includegraphics{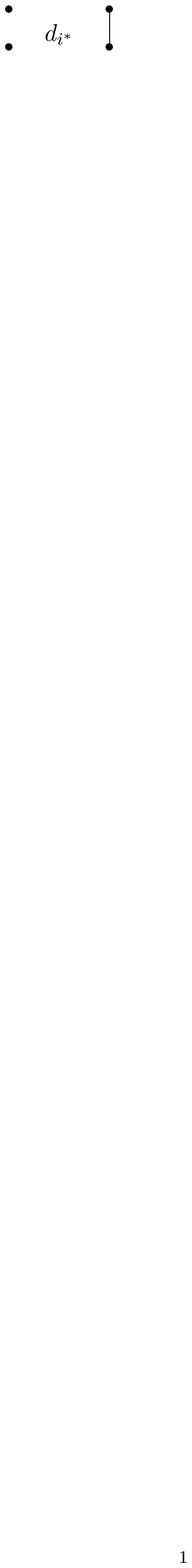}
      \end{center}
\item[(iii)] For $1 \leq i \leq {_{(s+r-2)}C_{(s-2)}},$  the collection of diagrams whose first and last connected components are through classes is denoted by $\Omega_{\{1, s+r\}, \{ \ \}}.$  The diagrams in $ \Omega_{\{1, s+r \}, \{ \ \}}$ will look like

      \begin{center}
      \includegraphics{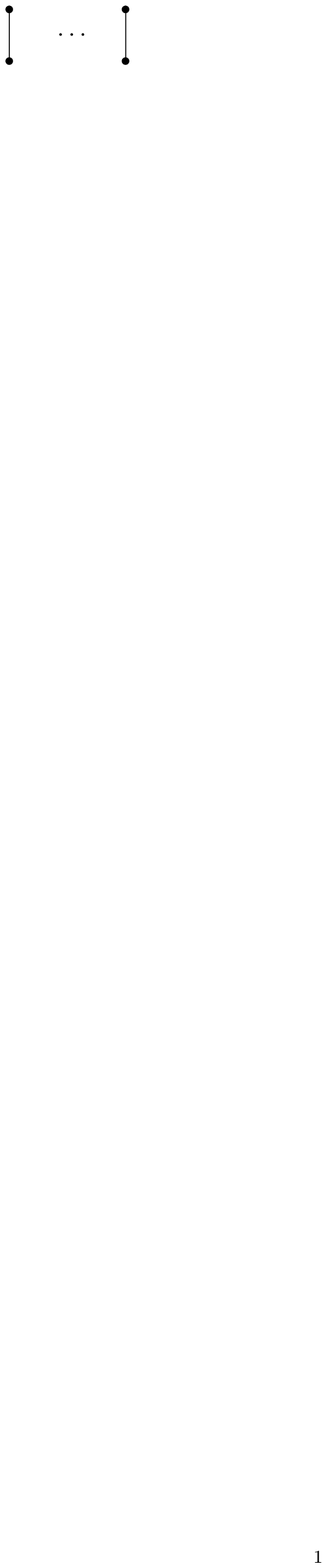}
      \end{center}
\item[(iv)] For $1 \leq i \leq {_{(s+r-2)}}C_{s},$ the collection of all diagrams whose first and last connected components are horizontal edges is denoted by $\Omega_{\{ \ \}, \{1, s+r\}}.$ The diagrams in $\Omega_{\{ \ \}, \{1, s+r\}}$ will look like
   \begin{center}
      \includegraphics{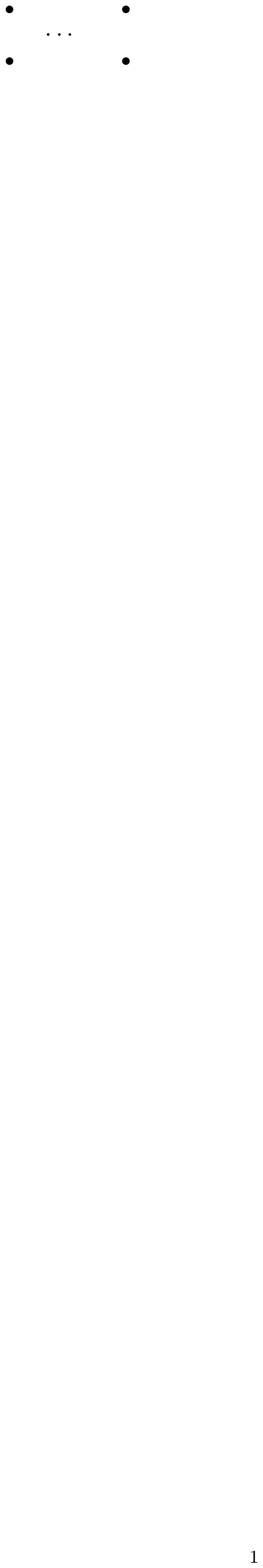}
      \end{center}
\end{enumerate}

\begin{lem}\label{L3.14}
Let $d_i, d_j \in \Omega_{\{m\}, \{f\}}$, $d_{i^{\ast}}, d_{j^{\ast}} \in \Omega_{\{f\}, \{m\}}$ and $a_{i j} = x_{\text{min} \{r,s\} - t}$ then
\begin{enumerate}
  \item[(i)]   $a_{i j} = a_{i^{\ast} j^{\ast}}.$
  \item[(ii)]  $a_{i^{\ast} j} = x_{\text{min} \{r, s\}-(t+1)}.$
\end{enumerate}
\end{lem}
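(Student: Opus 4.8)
The plan is to argue directly from the combinatorial description of the entries $a_{ij}$ given in Definition~\ref{D3.5}, keeping careful track of what happens at the two distinguished connected components $m$ and $f$. Recall that $a_{ij} = x_{\min\{s,r\}-f'}$ where $f'$ is the number of connected components that are horizontal edges in $d_i$ but through classes in $d_j$, or vice versa; by Remark~\ref{R3.6} we may picture a through class as a single vertical stroke $|$ and a horizontal edge as a dot $\centerdot$. For part~(i), I would observe that $d_i$ and $d_{i^{\ast}}$ (respectively $d_j$ and $d_{j^{\ast}}$) differ \emph{only} by swapping the roles of the $m^{\text{th}}$ and $f^{\text{th}}$ connected components: where $d_i$ has a through class at position $m$ and a dot at position $f$, the diagram $d_{i^{\ast}}$ has a dot at $m$ and a through class at $f$, and all other connected components are identical. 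Since $d_i,d_j\in\Omega_{\{m\},\{f\}}$, at positions $m$ and $f$ both $d_i,d_j$ agree (through class at $m$, dot at $f$), so these two positions contribute $0$ to the count defining $a_{ij}$; likewise $d_{i^{\ast}},d_{j^{\ast}}\in\Omega_{\{f\},\{m\}}$ agree at $m$ and $f$, contributing $0$ to the count defining $a_{i^{\ast}j^{\ast}}$. On every other position the diagrams $d_i,d_{i^{\ast}}$ coincide and $d_j,d_{j^{\ast}}$ coincide, so the two counts are equal, giving $a_{ij}=a_{i^{\ast}j^{\ast}}$.

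For part~(ii) I would compare $d_{i^{\ast}}$ against $d_j$. Away from positions $m$ and $f$, $d_{i^{\ast}}$ agrees with $d_i$ and $d_j$ is unchanged, so those positions contribute exactly the same number $t'$ (with $\min\{s,r\}-t' = $ the index appearing in $a_{ij}=x_{\min\{s,r\}-t}$, i.e.\ $t' = t$) as in the comparison of $d_i$ with $d_j$. At position $m$: $d_j$ has a through class while $d_{i^{\ast}}$ has a dot — a mismatch, contributing $1$. At position $f$: $d_j$ has a dot while $d_{i^{\ast}}$ has a through class — again a mismatch, contributing $1$. In the comparison of $d_i$ with $d_j$ these two positions contributed $0$, so the total count for $d_{i^{\ast}}$ versus $d_j$ is $t+2$. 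Hence $a_{i^{\ast}j} = x_{\min\{s,r\}-(t+2)}$.

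I should flag a discrepancy: the statement claims $a_{i^{\ast}j} = x_{\min\{s,r\}-(t+1)}$, whereas the naive count above yields $t+2$. The resolution is that the index $f$ in $a_{ij}=x_{\min\{s,r\}-f}$ of Definition~\ref{D3.5} counts \emph{pairs} of components whose roles are exchanged (one through class becoming a horizontal edge forces, in a valid pair of diagrams sharing the same underlying $d^{+}$ and the same total number $s$ of through classes, exactly one horizontal edge to become a through class in the opposite direction), so that a single simultaneous swap at $m$ and $f$ increments $f$ by one, not two. The main obstacle in writing this cleanly is therefore to state precisely the bookkeeping convention in Definition~\ref{D3.5} — whether $f$ counts individual mismatched components or matched exchange-pairs — and to verify that under this convention the swap at the single pair $\{m,f\}$ changes the statistic by exactly $1$. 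Once that convention is pinned down, both parts follow from the position-by-position comparison sketched above, using only that $d_i,d_j$ and $d_{i^{\ast}},d_{j^{\ast}}$ are built from the same $d^{+}$ and differ solely at the components $m$ and $f$.
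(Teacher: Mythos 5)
Your proposal is correct and follows essentially the same position-by-position comparison as the paper's own proof. The discrepancy you flag is resolved exactly as you suspect: the index in Definition \ref{D3.5} counts replacements in one direction only (equivalently, exchange pairs rather than individual mismatched components) --- the paper's proof of (ii) makes this explicit by counting the ``horizontal edges in $d_{i^{\ast}}$ which are replaced by through classes in $d_j$,'' of which the swap at positions $m$ and $f$ contributes exactly one (the new horizontal edge at position $m$), giving $t+1$ rather than $t+2$; this reading is also forced by the displayed matrix $A^{2,1}$, whose off-diagonal entry is $x_0 = x_{\min\{1,1\}-1}$ even though the two diagrams differ at both of their connected components.
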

\begin{proof}

\NI \textbf{Proof of (i):}
Let $d_i, d_j \in \Omega_{\{m\}, \{f\}}$ and $a_{i j} = x_{\text{min}\{r,s\}-t}$ where  $t$ denotes the number of horizontal edges in $d_i$ which are replaced by through classes in $d_j$ and vice versa. From the definition of $\Omega_{\{m\}, \{f\}},$ we know that the first and last connected components of $d_i$ and $d_j$ are the same.

By the definition of $\Omega_{\{f\}, \{m\}},$ we know that for every $d_i \in \Omega_{\{1\}, \{s+r\}}$ there exists a unique $d_{i^{\ast}} \in \Omega_{\{f\}, \{m\}}$ which differs only in the first and last connected component. Also, the first and last connected components of $d_{i^{\ast}}$ and $d_{j^{\ast}}$ are the same.

Thus, $a_{i j} = a_{i^{\ast} j^{\ast}}.$

\NI \textbf{Proof of (ii):} By the definition of $\Omega_{\{m\}, \{f\}},$ the $t$ number of horizontal edges which are replaced by the through classes and vice versa  lies in between the first and last connected components of $d_i$ and $d_j$. Also,the diagram $d_{i^{\ast}} \in \Omega_{\{f\}, \{m\}}$ is same as the diagram $d_i \in \Omega_{\{m\}, \{f\}}$ except the first and last connected component.

Therefore, including the first and last connected component there will be $t+1$ number of horizontal edges in $d_{i^{\ast}}$ which are replaced by through classes in $d_j$ and vice versa.

Thus, $a_{i^{\ast}, j} = x_{\text{min}\{s,r\}-(t+1)}.$
\end{proof}

\begin{lem}\label{L3.15}
 Let $d_i \in \Omega_{\{m\}, \{f\}}$ and $d_j \in \Omega_{\{m, f\}, \{ \}} \text{ or } \Omega_{\{ \ \}, \{m, f\}}$ then $$a_{i j} = a_{i^{\ast} j},\ \ \ \ \ 1 \leq i \leq {_{(s+r-2)}}C_{(s-1)}.$$

\end{lem}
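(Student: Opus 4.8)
The plan is to compute both entries directly from Definition \ref{D3.5}, using the fact that (after the reduction of Remark \ref{R3.6}) a diagram in $\Omega^{s+r,s}$ is completely determined by the set of positions occupied by its through classes. Write $S_i \subseteq \{1,\dots,s+r\}$, $|S_i|=s$, for the set of positions where $d_i$ has a through class, and similarly $S_j$ and $S_{i^{\ast}}$. Since every diagram has exactly $s$ through classes, the exponent defining $a_{ij}$ --- the number of horizontal edges of $d_i$ sitting over through classes of $d_j$, which is also the number of through classes of $d_i$ sitting over horizontal edges of $d_j$ --- is simply $t = |S_i \setminus S_j| = |S_j \setminus S_i|$, so $a_{ij} = x_{\min\{s,r\} - |S_i \setminus S_j|}$. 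Thus the lemma reduces to the identity $|S_{i^{\ast}} \setminus S_j| = |S_i \setminus S_j|$.

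First I would record how $S_{i^{\ast}}$ differs from $S_i$: since $d_i \in \Omega_{\{m\},\{f\}}$ and $d_{i^{\ast}} \in \Omega_{\{f\},\{m\}}$ agree outside positions $m$ and $f$ (cf. item (ii) of the arrangement of the diagrams in $\Omega^{s+r,s}$ and the proof of Lemma \ref{L3.14}), we have $m \in S_i$, $f \notin S_i$, and $S_{i^{\ast}} = (S_i \setminus \{m\}) \cup \{f\}$. In particular $S_i$ and $S_{i^{\ast}}$ coincide on $\{1,\dots,s+r\} \setminus \{m,f\}$, so only the positions $m$ and $f$ can contribute differently to $|S_i \setminus S_j|$ versus $|S_{i^{\ast}} \setminus S_j|$.

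Then I would split on the hypothesis on $d_j$. If $d_j \in \Omega_{\{m,f\},\{\ \}}$, then $m,f \in S_j$, so neither $m$ nor $f$ lies in $S_i \setminus S_j$ (the first is in $S_j$, the second is not in $S_i$), and likewise neither lies in $S_{i^{\ast}} \setminus S_j$; hence both differences are contained in $\{1,\dots,s+r\}\setminus\{m,f\}$, where $S_i = S_{i^{\ast}}$, and the two cardinalities agree. If instead $d_j \in \Omega_{\{\ \},\{m,f\}}$, then $m,f \notin S_j$, so $S_i \setminus S_j$ contains $m$ but not $f$, while $S_{i^{\ast}} \setminus S_j$ contains $f$ but not $m$; off $\{m,f\}$ the two sets still agree, so again $|S_i \setminus S_j| = |S_{i^{\ast}}\setminus S_j|$. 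Either way $a_{ij} = a_{i^{\ast}j}$, which is the assertion of the lemma for every $1 \leq i \leq {_{(s+r-2)}}C_{(s-1)}$.

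I do not anticipate a genuine obstacle here; the argument is pure bookkeeping once the set-theoretic reformulation is in place. The one point to get right is the reading of ``and vice versa'' in Definition \ref{D3.5} --- namely that the exponent is the single common value $|S_i\setminus S_j| = |S_j\setminus S_i|$ rather than their sum --- but this is already pinned down by Lemma \ref{L3.14}(ii), and in any case the symmetry of the local change at $m$ and $f$ makes the conclusion insensitive to that choice.
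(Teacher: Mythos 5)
Your proof is correct and takes essentially the same route as the paper's, whose own argument simply asserts that the equality follows from the definitions of $\Omega_{\{m\},\{f\}}$ and $\Omega_{\{f\},\{m\}}$; your reformulation via the through-class position sets $S_i$, $S_{i^{\ast}}$, $S_j$ and the two-case check on whether $m,f$ lie in $S_j$ supplies the bookkeeping the paper leaves implicit. Your reading of the exponent in Definition \ref{D3.5} as the common value $|S_i\setminus S_j|=|S_j\setminus S_i|$ is also the correct one, as the worked $A^{4,2}$ example in the paper confirms.
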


\begin{proof}
\NI Let $a_{ij} = x_{\text{min}\{r,s\}-t}$ where $t$ denotes the number  horizontal edges in $d_i$ which is replaced by through classes and vice versa. From the definition of $\Omega_{\{m\}, \{f\}}$, we know that the diagram $d_{i^{\ast}}$ is same as the diagram $d_i \in \Omega_{\{m\}, \{f\}}$ except  at the first and last connected components where the first(last) connected component of $d_i$ is a through class(horizontal edge) but the first(last) connected component of $d_{i^{\ast}}$ is a horizontal edge(through class).

$a_{i^{\ast}, j}$ denotes the entry corresponding to the product of diagrams $d_{i^{\ast}}$ and $d_j$ then by the definition of $\Omega_{\{m\}, \{f\}}$ and $\Omega_{\{f\}, \{m\}}$ we have, $$a_{i j} = a_{i^{\ast} j},\ \ \ \ \ 1 \leq i \leq {_{(s+r-2)}}C_{(s-1)}.$$

\end{proof}

\NI We shall now prove the main theorem of this paper using the arrangement of diagrams given above and induction.

\NI \textbf{Proof of Main theorem:} The proof of the main theorem is using induction on the size of the matrix.

\NI \textbf{Case (i):} Let $s = 1, r=1$ and
\begin{center}
\includegraphics{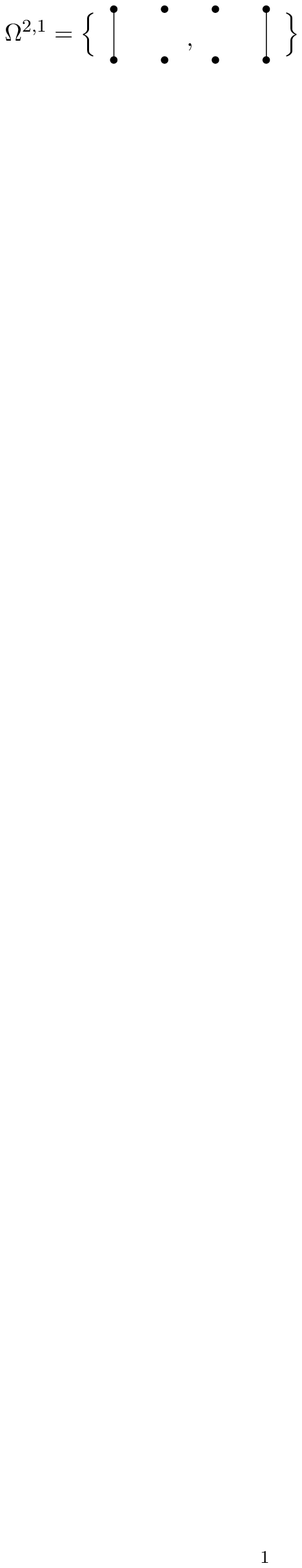}
\end{center}
By Definition \ref{D3.5}, the entries of the symmetric diagram matrix $A^{2,1}$ are $\{x_0, x_1\}$ where
\begin{center}
$A^{2,1} = \left(
             \begin{array}{cc}
               x_1 & x_0 \\
               x_0 & x_1 \\
             \end{array}
           \right)
$
\end{center}
Apply the following row operation and column operation on $A^{2,1}:$

\centerline{$R_d \leftrightarrow R_d - R_{d^{\ast}}, C_{d^{\ast}} \leftrightarrow C_{d^{\ast}} + C_d $ }
\NI where the diagrams $d$ and $d^{\ast}$ differ only at the first and last connected component. The first connected component of $d(d^{\ast})$ is through class(horizontal edge) and the last connected component of $d(d^{\ast})$ is horizontal edge(through class).

The reduced matrix is
\begin{center}
$\left(
   \begin{array}{cc}
     x_1-x_0 & 0 \\
     x_0 & x_1+x_0 \\
   \end{array}
 \right).
$
\end{center}
Thus, the eigenvalues of the symmetric diagram matrix $A^{2,1}$ are $x_1-x_0$ and $x_1+x_0.$

\NI \textbf{Case (ii):} Let $s = 2, r = 2$ and

\begin{center}
\includegraphics{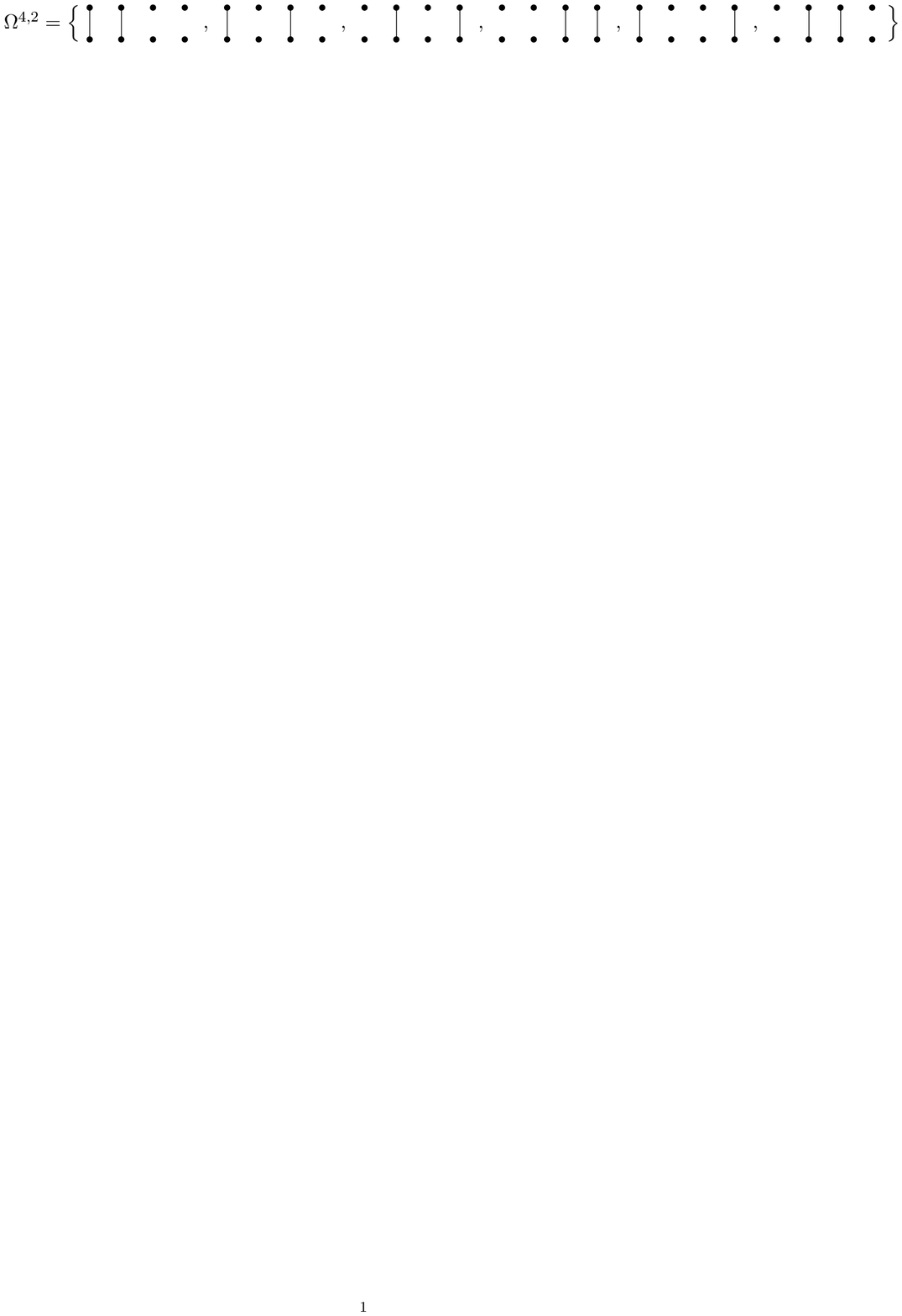}
\end{center}
\NI By Definition \ref{D3.5}, the entries of the symmetric diagram matrix $A^{4,2}$ are $\{x_0, x_1, x_2\}$ where

\begin{center}
$A^{4,2} = \left(
            \begin{array}{cccccc}
             x_2  & x_1 & x_1 & x_0 & x_1 & x_1 \\
             x_1  & x_2 & x_0 & x_1 & x_1 & x_1 \\
             x_1  & x_0 & x_2 & x_1 & x_1 & x_1 \\
             x_0  & x_1 & x_1 & x_2 & x_1 & x_1 \\
             x_1  & x_1 & x_1 & x_1 & x_2 & x_0 \\
             x_1  & x_1 & x_1 & x_1 & x_0 & x_2 \\
            \end{array}
          \right)$
\end{center}

Apply the following row and column operation on $A^{4, 2}$

\centerline{$R_d \leftrightarrow R_d - R_{d^{\ast}}, C_{d^{\ast}} \leftrightarrow C_{d^{\ast}} + C_d $}
\NI for all $d \in \Omega_{\{1\}, \{4\}}$ and $d^{\ast} \in \Omega_{\{4\}, \{1\}}$ which differs only at the first and last connected component as in Definition \ref{D3.12}. Then the reduced matrix is as follows:

\begin{center}
$\left(
  \begin{array}{cc}
    A_{1,1} & 0 \\
   \ast  & A_{1,2} \\
  \end{array}
\right)$
\end{center}
\NI where $A_{1,1} = \left(
                             \begin{array}{cc}
                               y_1 & y_0 \\
                               y_0 & y_1 \\
                             \end{array}
                           \right)
$ is a symmetric diagram matrix of size $2$ with $y_1 = x_2-x_1$ and $y_0 = x_1-x_0$ and
$A_{1,2} = \left(
                   \begin{array}{cccc}
                    x_2+x_1  & x_1+x_0 & x_1 & x_1 \\
                     x_1+x_0 & x_2+x_1 & x_1 & x_1 \\
                     2x_1 & 2x_1 & x_2 & x_0 \\
                     2x_1 & 2x_1 & x_0 & x_2 \\
                   \end{array}
                 \right)
$

The diagrams corresponding to the entries of the symmetric diagram matrix $A_{1, 1}$  belong to $\Omega_{\{1\}, \{4\}}$ and the diagrams corresponding to the entries of the matrix $A_{1, 2}$ belong to $\Omega_{\{4\}, \{1\}}, \Omega_{\{1,4\}, \{ \}}$ and $\Omega_{\{ \}, \{1, 4\}}.$

\NI Using induction, the eigenvalues of the symmetric diagram matrix $A^{4,2}_{1,1}$ are $y_1-y_0$ and $y_1 + y_0.$ i.e., $x_2 + x_0 - 2x_1$ and $x_2 - x_0.$

\NI Apply the following row and column operation on the matrix $A_{1,2}:$

\centerline{$R_d \leftrightarrow R_d - R_{d^{\ast}}, C_{d^{\ast}} \leftrightarrow C_{d^{\ast}} + C_d$}
\NI for the diagrams $d \in \Omega_{\{2,4\}, \{1, 3\}}$ and $d^{\ast} \in \Omega_{\{3, 4\}, \{1,2\}}.$

Thus, the reduced matrix is as follows:

\begin{center}
$\left(
   \begin{array}{cc}
     A_{2,1} & 0 \\
     \ast & A_{2,2} \\
   \end{array}
 \right)
$
\end{center}
\NI where $A_{2,1} = \left(x_2-x_0 \right)$ is a matrix of size $1$ and $A_{2,2} = \left(
             \begin{array}{ccc}
               x_2+2x_1+x_0 & x_1 & x_1 \\
               4x_1 & x_2 & x_0 \\
               4x_1 & x_0 & x_2 \\
             \end{array}
           \right).
$

\NI The diagram corresponding to the entry of the matrix $A_{2, 1}$ belong to $\Omega_{\{4,2\}, \{1,3\}}$ and the diagrams corresponding to the entries of the matrix $A_{2, 2}$ belong to $\Omega_{\{4,3\}, \{1,2\}}, \Omega_{\{1, 4\}, \{2, 3\}}$ and $\Omega_{\{2, 3\}, \{1, 4\}}.$

\NI The eigenvalue of the  matrix $A_{2,1}$ is $x_2 - x_0.$

\NI Apply the following row and column operations  on the matrix $A_{2, 2}:$ $$C_1 \leftrightarrow C_1 + C_2 + C_3, \ \ R_2 \leftrightarrow R_2 - R_1, \ \ R_3 \leftrightarrow R_3 - R_1$$
Thus, the reduced matrix is as follows:

\begin{center}
$\left(
   \begin{array}{cc}
     A_{3, 1} & \ast \\
     0 & A_{3,2} \\
   \end{array}
 \right)
$
\end{center}

\NI where $A_{3, 1} = \left( x_2+4x_1+x_0 \right)$ and $A_{3, 2} = \left(
                                                                     \begin{array}{cc}
                                                                       x_2-x_1 & x_0-x_1 \\
                                                                        x_0-x_1& x_2-x_1 \\
                                                                     \end{array}
                                                                   \right)
$ which is same as $A_{1, 1}.$

Therefore, the eigenvalues of the symmetric diagram matrix $A^{s+r, s}$ are $x_2 - x_0, x_2+4x_1+x_0$ and $x_2 -2x_1+x_0.$

In general, we shall do this for the diagrams in $\Omega^{s+r, s}$ and shall find the eigenvalues of the symmetric diagram matrix $A^{s+r, s}$ using induction.

\NI \textbf{Step I:}

We shall split $\Omega^{s+r, s}$ into four subsets as follows:

\centerline{$\Omega^{s+r, s} = \Omega_{\{1\}, \{s+r\}} \cup \Omega_{\{s+r\}, \{1\}} \cup \Omega_{\{1, s+r\}, \{\}} \cup \Omega_{\{\}, \{1, s+r\}}$} such that for all $d \in \Omega_{\{1\}, \{s+r\}}$ there exist a unique $d^{\ast} \in \Omega_{\{s+r\}, \{1\}}$ which differs only at the first and $(s+r)^{\text{th}}$ connected components where the first connected component of $d(d^{\ast})$ is through class (horizontal edge) and the $(s+r)^{\text{th}}$ connected component of $d(d^{\ast})$ is horizontal edge (through class).

Clearly,

$|\Omega_{\{1\}, \{s+r\}}| = {_{(s+r-2)}}C_{s-1} = |\Omega_{\{s+r\}, \{1\}}|, |\Omega_{\{1, s+r\}, \{ \}}| = {_{(s+r-2)}}C_{s-2}$ and $|\Omega_{\{ \}, \{ 1, s+r\}}| = {_{(s+r-2)}}C_s.$

Apply the following row and column operations on the symmetric diagram matrix $A^{s+r, s}$ of size ${_{(s+r)}}C_s.$

\centerline{$R_{d_i} \leftrightarrow R_{d_i} - R_{d_{i^{\ast}}} \ \ \ \ \ \forall d_i \in \Omega_{\{1\}, \{s+r\}}$}
\NI and

\centerline{$C_{d_{i^{\ast}}} \leftrightarrow C_{d_{i^{\ast}}} + C_{d_i} \ \ \ \ \ \ \forall d_{i^{\ast}} \in \Omega_{\{s+r\}, \{1\}}.$}

\NI Using lemmas \ref{L3.14} and \ref{L3.15}, the reduced matrix is as follows:

\begin{center}
$\left(
   \begin{array}{cc}
     A_{1,1} & 0 \\
     \ast & A_{1,2} \\
   \end{array}
 \right)
$
\end{center}

\NI where $A_{1,1}$  is a symmetric diagram matrix of size ${_{(s+r-2)}}C_{(s-1)}$ with entries $\{y_0, y_1, \cdots, y_{\text{min}\{s-1, r-1\}}\}$ and $y_i = x_{i+1} - x_i.$

\NI The diagrams corresponding to the entries of the symmetric diagram matrix $A_{1, 1}$ belong to $\Omega_{\{1\}, \{s+r\}}.$

\NI Using induction the eigenvalues of the symmetric diagram matrix $A_{1,1}$ of size ${_{(s+r-2)}}C_{(s-1)}$ are given as

\begin{equation}\label{E3.2}
\underset{t=0}{\overset{\text{min }\{s-1, r-1\}}{\sum}} \left\{ \underset{j=0}{\overset{l-1}{\sum}} (-1)^j \ {_{(l-1)}}C_j \ {_{[(s-1)-(l-1)]}}C_{(t-j)} \ {_{[(r-1)-(l-1)]}}C_{(t-j)}\right\} y_{\text{min} \{s-1, r-1\}-t}
\end{equation}
for all $ 0 \leq l-1 \leq \text{ min } \{s-1, r-1\}.$

Substitute $y_{\text{min} \{s-1, r-1\} -t} = x_{\text{min}\{s-1, r-1\}-t+1} - x_{\text{min}\{s-1, r-1\}-t}$ in expression (\ref{E3.2}) we get,
\begin{eqnarray*}
   &=& \underset{t=0}{\overset{\text{min }\{s-1, r-1\}}{\sum}} \left\{ \underset{j=0}{\overset{l-1}{\sum}} (-1)^j \ {_{(l-1)}}C_j \ {_{[(s-1)-(l-1)]}}C_{(t-j)} \ {_{[(r-1)-(l-1)]}}C_{(t-j)}\right\} \\
    & & \hspace{8cm} \left\{ x_{\text{min}\{s-1,r-1\}-t+1} - x_{\text{min}\{s-1,r-1\}-t} \right\} \\
    & =& \underset{t=0}{\overset{\text{min}\{s-1,r-1\}}{\sum}} \left\{ \underset{j=0}{\overset{l-1}{\sum}} (-1)^j \ {_{(l-1)}}C_j \ {_{(s-l)}}C_{(t-j)} \ {_{(r-l)}}C_{(t-j)} - \underset{j=0}{\overset{l-1}{\sum}} (-1)^j \ {_{(l-1)}}C_j \ {_{(s-l)}}C_{(t-1-j)} \ {_{(r-l)}}C_{(t-1-j)}\right\}\\
   & &  \hspace{13cm} x_{\text{min}\{s-1, r-1\}-t+1}\\
   & & \hspace{6cm}(\text{By collecting the coefficients } x_{\text{min}\{s-1, r-1\}-t+1})\\
    &=& \underset{t=0}{\overset{\text{min}\{s-1,r-1\}}{\sum}} \left\{ \underset{j=0}{\overset{l-1}{\sum}} (-1)^j \ {_{(l-1)}}C_j \ {_{(s-l)}}C_{(t-j)} \ {_{(r-l)}}C_{(t-j)} - \underset{j'=1}{\overset{l}{\sum}} (-1)^{j'-1} \ {_{(l-1)}}C_{(j'-1)} \ {_{(s-l)}}C_{(t-j')} \ {_{(r-l)}}C_{(t-j')}\right\}\\
   & &  \hspace{13cm} x_{\text{min}\{s-1, r-1\}-t+1}\\
   &=&  \underset{t=0}{\overset{\text{min}\{s-1,r-1\}}{\sum}} \left\{ {_{(s-l)}}C_t \ {_{(r-l)}}C_t + \underset{j=0}{\overset{l-1}{\sum}} (-1)^j \ {_{(l-1)}}C_j \ {_{(s-l)}}C_{(t-j)} \ {_{(r-l)}}C_{(t-j)} + (-1)^l \ {_{(s-l)}}C_{(t-l)} \ {_{(r-l)}}C_{(t-l)}\right\} \\
   & &  \hspace{13cm} x_{\text{min}\{s-1, r-1\}-t+1}
      \end{eqnarray*}

    \begin{eqnarray*}
         &=& \underset{t=0}{\overset{\text{min}\{s-1,r-1\}}{\sum}} \left\{ \underset{j=0}{\overset{l}{\sum}} (-1)^j \ {_{(l-1)}}C_j \ {_{(s-l)}}C_{(t-j)} \ {_{(r-l)}}C_{(t-j)}\right\} x_{\text{min}\{s-1, r-1\}-t+1} \\
    & & \hspace{9cm} \text{ for all } \ 0 \leq l-1 \leq \text{min}\{s-1, r-1\}\\
   &=& \underset{t=0}{\overset{\text{min}\{s-1,r-1\}}{\sum}} \left\{ \underset{j=0}{\overset{l}{\sum}} (-1)^j \ {_{(l-1)}}C_j \ {_{(s-l)}}C_{(t-j)} \ {_{(r-l)}}C_{(t-j)}\right\} x_{\text{min}\{s, r\}-t} \ \ \ \forall \ 1 \leq l \leq \text{min}\{s, r\}
\end{eqnarray*}

\NI Thus, the eigenvalues of the  symmetric diagram matrix $A_{1, 1}$ of size ${_{(s+r-4)}}C_{(s-2)}$ are given by

\begin{equation}\label{E}
 \underset{t=0}{\overset{\text{min}\{s, r\}}{\sum}} \left\{ \underset{j=0}{\overset{l}{\sum}} (-1)^j \ {_{(l-1)}}C_j \ {_{(s-l)}}C_{(t-j)} \ {_{(r-l)}}C_{(t-j)}\right\} x_{\text{min}\{s, r\}-t} \ \ \ \forall \ 1 \leq l \leq \text{min}\{s, r\}.
\end{equation}

\NI Therefore, the eigenvalues of the symmetric diagram matrix $A^{s+r,s}$ of size ${_{(s+r-2)}}C_{(s-1)}$ are

\begin{equation}\label{E3.4}
 \underset{t=0}{\overset{\text{min}\{s, r\}}{\sum}} \left\{ \underset{j=0}{\overset{l}{\sum}} (-1)^j \ {_{(l-1)}}C_j \ {_{(s-l)}}C_{(t-j)} \ {_{(r-l)}}C_{(t-j)}\right\} x_{\text{min}\{s, r\}-t} \ \ \ \forall \ 1 \leq l \leq \text{min}\{s, r\}.
\end{equation}

 The number of distinct eigenvalues of the symmetric diagram matrix $A^{s+r, s}$ so far computed which is given in (\ref{E3.4}) are min $\{s, r\}.$

    We shall now prove that the eigenvalues of the submatrix $A_{1, 2}$ are also as given in expression (\ref{E}).

\NI \textbf{Step 2:} The diagrams corresponding to the entries of the matrix $A_{1,2}$ belong to $\Omega_{I_i, J_i}$ for $1 \leq i \leq 3$ where  $I_1 = \{s+r\}, I_2 = \{1, s+r\}, I_3 = \{ \ \}, J_1 = \{1\}, J_2 = \{ \ \}$ and $J_3 =  \{1, s+r\}.$

We shall apply the following row and column operations on $A_{1, 2}:$

\centerline{$ R_d \leftrightarrow R_d - R{d^{\ast}} \ \ C_{d^{\ast}} \leftrightarrow C_{d^{\ast}} + C_d \ \ \forall \ d \in \Omega_{I_i \cup \{2\}, J_i \cup \{s+r-1\}} $ and $d^{\ast} \in \Omega_{I_i \cup \{s+r-1\}, J_i \cup \{2\}}$}

\NI for all $1 \leq i \leq 3.$

Using Lemmas \ref{L3.14} and \ref{L3.15} and  applying suitable  row and column operations to the reduced matrix looks like,

\begin{center}
$\left(
   \begin{array}{cc}
     A_{2,1} & 0 \\
     \ast & A_{2,2} \\
   \end{array}
 \right).
$
\end{center}

The diagrams corresponding to the entries of the matrix $A_{2,1}$ belong to $\Omega_{I_i \cup \{2\}, J_i \cup \{s+r-1\}}$ for all $1 \leq i \leq 3.$ The size of the matrix $A_{2,1}$ is ${_{(s+r-4)}}C_{(s-2)} + {_{(s+r-4)}}C_{(s-3)} + {_{(s+r-4)}}C_{(s-1)} = \underset{j=1}{\overset{3}{\sum}} {_{(s+r-4)}}C_{(s-j)}.$

Now, we shall show that the eigenvalues of the matrix $A_{2, 1}$ belong to the collection of all eigenvalues of the matrix $A_{1, 1}$ obtained in Step 1.

We know that the diagrams corresponding to the entries of the symmetric diagram matrix $A_{1, 1}$ obtained in Step $1$ belong to $\Omega_{\{1\}, \{s+r\}}.$

Apply the following row and column operations on the symmetric diagram matrix $A_{1,1}:$

\centerline{$R_d \leftrightarrow R_d - R_{d^{\ast}}, \ \ C_{d^{\ast}} \leftrightarrow C_{d^{\ast}} + C_d$ }

\NI for all $ d \in \Omega_{\{1\} \cup \{ 2\}, \{s+r-1\} \cup \{ s+r\}}$ and $d^{\ast} \in \Omega_{\{1\} \cup\{ s+r-1\}, \{2\} \cup \{ s+r\}}.$

Using induction on the number of through classes the reduced matrix is as follows:

\begin{center}
$\left(
   \begin{array}{cc}
     B_{1, 1} & 0 \\
     \ast & B_{1, 2} \\
   \end{array}
 \right)
$
\end{center}
where the diagrams corresponding to the entries of the matrix $B_{1,1}$ belong to $\Omega_{\{1,2\}, \{s+r-1, s+r\}}$ and the diagrams corresponding to the entries of the matrix $B_{1, 2}$ belong to $\Omega_{\{1, s+r-1\}, \{2, s+r\}}, \Omega_{\{1, 2, s+r-1\}, \{s+r\}}$ and $ \Omega_{\{1\}, \{2, s+r-1, s+r\}}.$

The size of the matrix $B_{1, 1}$ is ${_{(s+r-4)}}C_{(s-2)}$ and the size of the matrix $B_{1, 2}$ is $\underset{j=1}{\overset{3}{\sum}} {_{(s+r-4)}}C_{(s-j)}.$

\NI Since, $B_{1, 1}$ and $B_{1,2}$ are the submatrices of the symmetric diagram matrix  $A_{1,1}$, by induction the eigenvalues of the matrices $B_{1, 1}$ and $B_{1, 2}$ belong to the collection of all eigenvalues given in expression \ref{E}.

\NI It is clear from the definition of $\Omega_{I, J}$ defined in Definition \ref{D3.12} that the submatrices $B_{1, 2}$ and $A_{2, 1}$ are the same. Hence the eigenvalues of the matrix $B_{1, 2}$ and $A_{2, 1}$ are the same.

\NI The diagrams corresponding to the entries of the matrix $A_{2, 2}$ belong to $\Omega_{I_i \cup \{s+r-1\}, J_i \cup \{2\}}, \Omega_{I_i \cup \{2, s+r-1\}, J_i }$ and $\Omega_{I_i, J_i \cup \{2, s+r-1\}}$ for $1 \leq i \leq 3.$

\textbf{Step j:} In general, the diagrams corresponding to the entries of the matrix $A_{j-1, 2}$ obtained in step $j-1$ belong to $\Omega_{I_i \cup  \{s+r-(j-2)\}, J_i \cup \{j-1\}}, \Omega_{I_i \cup \{j-1, s+r-(j-2)\}, J_i}$ and $\Omega_{I_i, J_i \cup \{j-1, s+r-(j-2)\}}$ with $I_i \cap J_i = \emptyset$ for all $1 \leq i \leq 3^{j-2}.$

Apply the following row and column operations on $A_{j-1, 2}:$

\centerline{$R_d \leftrightarrow R_d - R_{d^{\ast}}, \ \ C_{d^{\ast}} \leftrightarrow C_{d^{\ast}} + C_d$ }
\NI for all $d \in \Omega_{I_i \cup \{j\}, J_i \cup \{s+r-(j-1)\}}$ and $d^{\ast} \in \Omega_{I_i \cup \{s+r-(j-1)\}, J_i \cup \{j\}}.$

Using Lemmas \ref{L3.14} and \ref{L3.15} and interchanging the rows and columns suitably the reduced matrix is as follows:

\begin{center}
$\left(
   \begin{array}{cc}
     A_{j, 1} & 0 \\
     \ast & A_{j, 2} \\
   \end{array}
 \right)
$
\end{center}

\NI The diagrams corresponding to the entries of the matrix $A_{j,1}$ belong to $\Omega_{I_i \cup \{j\}, J_i \cup \{s+r-(j-1)\}}$ for all $1 \leq i \leq 3^{j-1}.$ The size of the matrix $A_{j,1}$ is $ \underset{i=1}{\overset{3^{j-1}}{\sum}} {_{(s+r-(|I_i| + |J_i|))}}C_{(s-|I_i|)}.$

\NI Now, we shall show that the eigenvalues of the matrix $A_{j, 1}$ and the eigenvalues of the matrix $B_{j-1, 2}$ obtained from $B_{1,2}$ inductively as in Step 2 are the same.

\NI Using induction, we know that the diagrams corresponding to the entries of the symmetric diagram matrix $B_{j-2, 2}$ obtained in Step $j-2$ belong to $\Omega_{I'_i \cup \{s+r-(j-3)\}, J'_i \cup \{j-2\}}, \Omega_{I'_i \cup \{j-2,  s+r-(j-3)\}, J'_i}$ and $\Omega_{I'_i, J'_i \cup \{j-2, s+r-(j-3)\}}$ for all $1 \leq i \leq 3^{j-3}.$

\NI Apply the following row and column operations on the symmetric diagram matrix $B_{j-2,2}:$

\centerline{$R_d \leftrightarrow R_d - R_{d^{\ast}}, \ \ C_{d^{\ast}} \leftrightarrow C_{d^{\ast}} + C_d$ }

\NI for all $ d \in \Omega_{I'_i \cup \{ j-1\}, J'_i \cup \{ s+r-(j-2)\}}$ and $d^{\ast} \in \Omega_{I'_i \cup\{ s+r-(j-2)\}, J'_i \cup \{ j-1\}}, 1 \leq i \leq 3^{j-2}.$

\NI Using induction on the number of through classes, Lemma \ref{L3.14}, \ref{L3.15} and interchanging rows and column suitably the reduced matrix is as follows:

\begin{center}
$\left(
   \begin{array}{cc}
     B_{j-1, 1} & 0 \\
     \ast & B_{j-1, 2} \\
   \end{array}
 \right)
$
\end{center}
where the diagrams corresponding to the entries of the matrix $B_{j-1,1}$ belong to $\Omega_{I'_i \cup \{j-1\}, J'_i \cup \{s+r-(j-2)\}}$ and the diagrams corresponding to the entries of the matrix $B_{j-1, 2}$ belong to $\Omega_{I'_i \cup \{s+r-(j-2)\}, J'_i \cup \{j-1\}}, \\ \Omega_{I'_i \cup \{j-1, s+r-(j-2)\}, J'_i }$ and $ \Omega_{I'_i, J'_i \cup \{j-1, s+r-(j-2)\}}$ for all $1 \leq i \leq 3^{j-2}.$

The size of the matrix  $B_{j-1, 2}$ is $\underset{m=1}{\overset{3^{j-1}}{\sum}} {_{(s+r-|I_i| - |J_i|)}}C_{(s-|I_i|)}.$

\NI Since, $B_{j-1, 1}$ and $B_{j-1,2}$ are the submatrices of the symmetric diagram matrix  $A_{1,1}$, the eigenvalues of the matrices $B_{j-1, 1}$ and $B_{j-1, 2}$ belong to the collection of all eigenvalues given in expression \ref{E}.

\NI It is clear from the definition of $\Omega_{I, J}$ defined in Definition \ref{D3.12} that the submatrices $B_{j-1, 2}$ and $A_{j, 2}$ are the same. Hence, eigenvalues of the matrix $B_{j-1, 2}$ and $A_{j, 1}$ are the same.

\NI The diagrams corresponding to the entries of the matrix $A_{j, 2}$ belong to $\Omega_{I_i \cup \{s+r-(j-1)\}, J_i \cup \{j\}}, \\ \Omega_{I_i \cup \{j, s+r-(j-1)\}, J_i }$ and $\Omega_{I_i, J_i \cup \{j, s+r-(j-1)\}}$ for $1 \leq i \leq 3^{j-1}.$

\begin{note}
Some of $I_i$ and $J_i$ may be empty. When the set is empty we leave that set and continue the process. Also, we always consider the matrix with coefficient $1$ for the highest order in the determinant.
\end{note}

This process is continued till step $p$ if $s+r = 2p$ and $p-1$ if $s+r = 2p+1.$

If $s+r = 2p+1$ then we apply the following row and column operations on $A_{p-1, 2}:$

\centerline{$R_d \leftrightarrow R_d - R_{d^{\ast}}$ and $C_{d^{\ast}} \leftrightarrow C_{d^{\ast}} + C_d$}
\NI for all $d \in  \Omega_{I_i \cup \{p\}, J_i \cup \{p+1\}}$ and $d \in \Omega_{I_i \cup \{p+1\}, J_i \cup \{p+2\}}, d^{\ast} \in  \Omega_{I_i \cup \{p+1\}, J_i \cup \{p\}}$ and $d^{\ast} \in \Omega_{I_i \cup \{p+2\}, J_i \cup \{p+1\}}, \\ 1 \leq i \leq 3^{p-1}.$

In both the cases (i.e., $s+r = 2p$ and $s+r = 2p+1$), using Lemmas \ref{L3.14}, \ref{L3.15} and by interchanging rows and columns suitably the reduced matrix is as follows:

\begin{center}
$\left(
   \begin{array}{cc}
     A_{p, 1} & 0  \\
     \ast & A_{p, 2} \\
   \end{array}
 \right).
$
\end{center}

The diagrams corresponding to the entries of the matrix $A_{p, 2}$ belong to $\Omega_{I_i, J_i}$ for all $1 \leq i \leq 3^p.$

Finally, we shall apply the following row and column operations on the matrix $A_{p, 2}:$

\begin{enumerate}
  \item[(i)] Fix a $d \in \Omega_{\{s+r\}, \{1\}} \setminus \underset{i=1}{\overset{3^p}{\cup}} \Omega_{I_i, J_i}.$

      \centerline{$C_d \leftrightarrow C_d + \underset{d'}{\sum} C_{d'}$}

  where $d' \in \Omega_{\{s+r\}, \{1\}} \setminus \underset{i=1}{\overset{3^p}{\cup}} \Omega_{I_i, J_i}.$

  \centerline{$R_{d'} \leftrightarrow R_{d'} - R_d$ for all $d' \in \Omega_{\{s+r\}, \{1\}} \setminus \underset{i=1}{\overset{3^p}{\cup}} \Omega_{I_i, J_i}.$}

  \item[(ii)] Fix a $d \in \Omega_{\{1, s+r\}, \{ \ \}} \setminus \underset{i=1}{\overset{3^p}{\cup}} \Omega_{I_i, J_i}.$

      \centerline{$C_d \leftrightarrow C_d + \underset{d'}{\sum} C_{d'}$}

  where $d' \in \Omega_{\{1,s+r\}, \{ \ \}} \setminus \underset{i=1}{\overset{3^p}{\cup}} \Omega_{I_i, J_i}.$

  \centerline{$R_{d'} \leftrightarrow R_{d'} - R_d$ for all $d' \in \Omega_{\{1, s+r\}, \{ \ \}} \setminus \underset{i=1}{\overset{3^p}{\cup}} \Omega_{I_i, J_i}.$}

  \item[(iii)]   Fix a $d \in \Omega_{\{ \ \}, \{1, s+r\}} \setminus \underset{i=1}{\overset{3^p}{\cup}} \Omega_{I_i, J_i}.$

      \centerline{$C_d \leftrightarrow C_d + \underset{d'}{\sum} C_{d'}$}

  where $d' \in \Omega_{\{ \ \}, \{1, s+r\}} \setminus \underset{i=1}{\overset{3^p}{\cup}} \Omega_{I_i, J_i}.$

  \centerline{$R_{d'} \leftrightarrow R_{d'} - R_d$ for all $d' \in \Omega_{\{ \ \}, \{1, s+r\}} \setminus \underset{i=1}{\overset{3^p}{\cup}} \Omega_{I_i, J_i}.$}

\end{enumerate}

Using induction, Lemma \ref{L3.14}, Lemma \ref{L3.15} and after applying suitable row and column operations on the reduced matrix, it becomes,

\begin{center}
$\left(
   \begin{array}{cc}
     A_{p+1, 1} & 0 \\
     \ast & A_{p+1, 2} \\
   \end{array}
 \right)
$
\end{center}
\NI where the size of the matrix $A_{p+1, 1}$ is $3$ and the matrix $A_{p+1, 2}$ is one among the submatrices obtained after applying row and column operations on the symmetric diagram matrix $A_{1, 1}$ inductively.

Thus, the eigenvalues of the matrix $A_{p+1, 2}$ belong to the collection of all eigenvalues given in expression \ref{E}.

Now, we are left out to find  the eigenvalues of the matrix $A_{p+1, 1}.$ For that we add all the entries to one column and subtract the corresponding row with other rows.

The reduced matrix is as follows:

\begin{center}
$\left(
   \begin{array}{cc}
     A' & \ast \\
     0 & A'' \\
   \end{array}
 \right)
$
\end{center}
where $A'$ is a $1 \times 1$ matrix and $A''$ is a $2 \times 2$ matrix.

Since, we have only performed addition on the columns the entry of the matrix $A'$ is the sum of the entries of the symmetric diagram matrix $A^{s+r, s}$ of size ${_{(s+r)}}C_{s}$ which is given by

$\underset{t=0}{\overset{\text{min}\{s, r\}}{\sum}} {_s}C_t \ {_r}C_t \ x_{\text{min} \{s, r\}-t}.$

\NI we perform the following row and column operations on the matrix $A'':$

\centerline{$R_1 \leftrightarrow R_1 + R_2$ and $C_2 \leftrightarrow C_2 - C_1$}

 Thus, the eigenvalues of the matrix $A''$ also belong to the collection of all eigenvalues given in expression \ref{E}.


Thus, we have computed all the $\text{min} \{s, r\} + 1$ number of distinct eigenvalues of the symmetric diagram matrix $A^{s+r, s}.$

Thus, the eigenvalues of the symmetric diagram matrix $A^{s+r, s}$ of size ${_{(s+r)}}C_{s}$ is given by

\centerline{$\underset{t=0}{\overset{\text{min}\{s, r\}}{\sum}} \left\{ \underset{j=0}{\overset{l}{\sum}} (-1)^j \ {_l}C_j \ {_{(s-l)}}C_{(t-j)} \ {_{(r-l)}}C_{(t-j)}\right\} x_{\text{min}\{s, r\} -t}$}

for all $0 \leq l \leq \text{min}\{s, r\}.$

\section{\textbf{Eigenvalues of Gram matrices for a class of diagram algebras}}
\subsection{Eigenvalues of the Gram matrices of the partition algebras  }
\textbf{\\}
In this section, we compute the eigenvalues of Gram matrices of partition algebras where the block submatrices of the Gram matrix are realized as the direct product of the symmetric diagram matrices.

\begin{thm}\label{T4.1}
\begin{enumerate}
\item[(i)]The set of all distinct eigenvalues of Gram matrix  $G_s$ of the partition algebra with entries $\{X_0, X_1, \cdots, X_{\text{min}\{s, r\}}\}$ are given by

\centerline{$ \underset{t=0}{\overset{\text{min}\{s, r\}}{\sum}} \left[ \underset{j=0}{\overset{l}{\sum}} (-1)^j \ {_l}C_j \ {_{(s-l)}}C_{(t-j)} \ {_{(r-l)}}C_{(t-j)}\right] X_{\text{min}\{s, r\}-t}$}

\NI for all $0 \leq l \leq \text{min}\{r, s\}$ and $0 \leq r \leq k-s$ where $X_{\text{min}\{s, r\}-t} = (-1)^t \ t! \ \underset{m=i}{\overset{r-1}{\prod}} [x-(s+l)], k$ and $s$ are fixed integers.

\item[(ii)]The eigenvalues of the Gram matrix $G_s$ are integers which is given by

\NI \hspace{-0.5cm} $\underset{t=0}{\overset{\text{min}\{s, r\}}{\sum}} \left[ \underset{j=0}{\overset{l}{\sum}} (-1)^j \ {_l}C_j \ {_{(s-l)}}C_{(t-j)} \ {_{(r-l)}}C_{(t-j)}\right] X_{\text{min}\{s, r\}-t} = \underset{i=0}{\overset{l-1}{\prod}} [x-(s-1+i)] \underset{j=0}{\overset{\text{min}\{s, r\}-l-1}{\prod}}[x-(2s+j)]$

\NI  $0 \leq l \leq \text{min}\{s, r\}.$
\end{enumerate}
\end{thm}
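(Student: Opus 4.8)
The plan is to deduce both parts from the structure of the Gram matrix $G_s$ established in Theorem \ref{T2.17}(c) together with the eigenvalue formula for symmetric diagram matrices in Theorem \ref{T3.9}. First I would recall that by Theorem \ref{T2.17}(c), after the column and row operations the Gram matrix $G_s$ is similar to $\bigoplus_{0\le r\le k-s} A'_{r,r}$, where each $A'_{r,r}$ is a block matrix whose diagonal entry is $\prod_{l=0}^{r-1}[x-(s+l)]$ and whose off-diagonal entries $b_{ij}$, coming from diagrams related as in Notation \ref{N2.15}(c) and Proposition \ref{P2.16}(c), equal $(-1)^t\,t!\,\prod_{l=t}^{r-1}[x-(s+l)]$ — all remaining entries being zero. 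The key observation is that, after factoring out the common product $\prod_{l=0}^{r-1}[x-(s+l)]$ appearing in every nonzero entry of the "full" rank block (i.e., rescaling), the block $A'_{r,r}$ is exactly the symmetric diagram matrix $A^{s+r,s}$ with the specialization $x_{\min\{s,r\}-t} = X_{\min\{s,r\}-t} := (-1)^t\,t!\,\prod_{m=t}^{r-1}[x-(s+l)]$; here one uses that the combinatorial pattern governing which entry is which power $x_{\min\{s,r\}-t}$ — namely the count $f$ of horizontal edges of $d_i$ replaced by through classes in $d_j$ — coincides with the count $t$ of Proposition \ref{P2.16}(c). Hence the eigenvalues of $A'_{r,r}$ are obtained by substituting $X_{\min\{s,r\}-t}$ for $x_{\min\{s,r\}-t}$ in the formula of Theorem \ref{T3.9}, which gives precisely the displayed expression in part (i), ranging over $0\le l\le\min\{r,s\}$ and $0\le r\le k-s$.

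For part (ii), the task is to evaluate the sum $\sum_{t=0}^{\min\{s,r\}}\big[\sum_{j=0}^{l}(-1)^j\,{}_lC_j\,{}_{(s-l)}C_{(t-j)}\,{}_{(r-l)}C_{(t-j)}\big]\,X_{\min\{s,r\}-t}$ in closed form. I would proceed by induction on $l$ using Lemma \ref{L3.10} and Corollary \ref{C3.11}: writing the $l$-th eigenvalue as $\sum_t a_t^{l}\,X_{\min\{s,r\}-t}$ with $a_t^l=\sum_j(-1)^j\,{}_lC_j\,a_{t-j}$ and $a_i={}_{(s-l)}C_i\,{}_{(r-l)}C_i$, the recursion $a_t^{l+1}=a_t^l-a_{t-1}^l$ translates into a recursion on the eigenvalues themselves. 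The base case $l=0$ is the "total sum" eigenvalue $\sum_{t}{}_sC_t\,{}_rC_t\,X_{\min\{s,r\}-t}$, which I would evaluate directly by substituting the product formula for $X$ and using a Vandermonde-type identity to recognize it as $\prod_{j=0}^{\min\{s,r\}-1}[x-(2s+j)]$. The inductive step amounts to checking that passing from the $l$-th to the $(l+1)$-th eigenvalue corresponds to multiplying by $[x-(s-1+l)]$ and dropping the last factor $[x-(2s+\min\{s,r\}-l-1)]$ — i.e. that $\prod_{i=0}^{l-1}[x-(s-1+i)]\prod_{j=0}^{\min\{s,r\}-l-1}[x-(2s+j)]$ satisfies the same one-step recursion derived from Lemma \ref{L3.10}.

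The main obstacle I anticipate is the telescoping/combinatorial identity in the inductive step of part (ii): one must verify that the difference of two consecutive closed-form products equals the difference $\sum_t(a_t^l-a_{t-1}^l)X_{\min\{s,r\}-t}$ after the $X$'s are expanded as falling-factorial-type products, and this requires a careful bookkeeping of how the index shift $t\mapsto t-1$ interacts with the shift of the product $\prod_{m=t}^{r-1}[x-(s+l)]$ inside $X_{\min\{s,r\}-t}$. Once this single algebraic identity is in hand, induction closes the argument, and integrality in part (ii) is then immediate since each eigenvalue is exhibited as an explicit product $\prod_{i=0}^{l-1}[x-(s-1+i)]\prod_{j=0}^{\min\{s,r\}-l-1}[x-(2s+j)]$ with integer linear factors (for integer specializations of $x$). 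I would also remark that the ranges match: as $r$ runs over $0\le r\le k-s$ and $l$ over $0\le l\le\min\{s,r\}$, one recovers all the distinct eigenvalues of $G_s$, so no eigenvalue is missed and none is spurious.
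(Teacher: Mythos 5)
For part (i) your route is essentially the paper's: reduce via Theorem \ref{T2.17}(c) to the blocks $A'_{r,r}$, match the entry pattern $(-1)^t\,t!\,\prod_{m=t}^{r-1}[x-(s+m)]$ with the symmetric-diagram-matrix pattern $x_{\min\{s,r\}-t}$, and invoke Theorem \ref{T3.9}. Two places where the paper is more careful than you are: first, $A'_{r,r}$ is not a single symmetric diagram matrix but a direct sum of copies of $A^{s+r,s}$, one for each underlying set partition $R^d$ of the top row into $s+r$ blocks (each copy of size ${_{(s+r)}}C_s$), and one must check that all entries between diagrams with different underlying partitions vanish after the column operations --- the paper does this via its Case (i)/(ii) analysis citing Lemmas 3.19, 3.20 and 3.23 of \cite{KP}, and without it you cannot conclude that the spectrum of $A'_{r,r}$ is just that of $A^{s+r,s}$ under $x_i\mapsto X_i$. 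Second, your aside about ``factoring out the common product'' is a red herring: no rescaling is performed, one simply substitutes $X_i$ for $x_i$.

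For part (ii) the paper offers nothing beyond ``compare coefficients,'' so your induction on $l$ via Lemma \ref{L3.10} is a genuine addition. However, your base case as described cannot succeed: for $l=0$ the eigenvalue $\sum_{t}{_s}C_t\,{_r}C_t\,X_{\min\{s,r\}-t}$ is a monic polynomial of degree $r$ (its $t=0$ term is $\prod_{m=0}^{r-1}[x-(s+m)]$ and every other term has strictly smaller degree), whereas the claimed product $\prod_{j=0}^{\min\{s,r\}-1}[x-(2s+j)]$ has degree $\min\{s,r\}$; these disagree whenever $r>s$. Concretely, for $s=1$, $r=2$ one has $X_1=(x-1)(x-2)$, $X_0=-(x-2)$, and $X_1+2X_0=(x-2)(x-3)$, while the stated right-hand side is the linear polynomial $x-2$. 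The identity in (ii) is only correct if the upper limit of the second product is read as $r-l-1$ rather than $\min\{s,r\}-l-1$ (one checks $s=2,r=2$ and $s=1,r=2$ against this corrected form and both work); your Vandermonde evaluation and the telescoping inductive step should be aimed at that target, and as written your plan would be attempting to verify an identity that is false for $r>s$.
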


\begin{proof}
\NI \textbf{Proof of (i):}Let  $U^{R^d}_{R^d} \in \mathbb{J}_s^r$  such that $\phi'(R^d) = \lambda'$ where $\mathbb{J}_s^r$ and $\Omega_s^r$ are as in Notation \ref{N2.12} and Definition \ref{D2.11} respectively.

We shall draw a diagram  using the diagram $R^{d}$ with $s$ through classes as follows:
\begin{enumerate}
  \item[(i)] Draw $R^{d}$ denoted by $R^{d^+}$ in the top row and a copy of $R^{d}$ denoted by $R^{d^-}$ in the bottom row.
  \item[(ii)] Among the $s+r$ connected components in the top row choose $s$ connected components and join each connected component in the top row with the respective connected component in the bottom row by vertical edges.
  \item[(iii)] Denote the collection of such diagrams as $\mathbb{J}^{R^d, r}_s.$  In particular $U^{R^d}_{R^d} \in \Omega^{s+r, r}_{R^d}.$
\end{enumerate}

\NI It is clear that the number of such diagrams with $s$ through classes is ${_{(s+r)}}C_s$ and $\mathbb{J}^{r}_{s} = \underset{R^d}{\cup} \ \Omega^{s+r, r}_{R^d}$ where $\mathbb{J}^r_s$ is as in Notation \ref{N2.12}.

Suppose $U^{R^{d'}}_{R^{d'}} \in \mathbb{J}^{r'}_{s}$ then the entry in the Gram matrix corresponding to the product $U^{R^d}_{R^d} \cdot U^{R^{d'}}_{R^{d'}}$ is either  $ x^{r''}$ with $r'' < r$ or $0$ mod $\lambda'.$

\NI \textbf{Case (i):} If the entry corresponding to the product $U^{R^d}_{R^d} \cdot U^{R^{d'}}_{R^{d'}}$ in the Gram matrix $G_s$ is $x^{r''}$ with $r'' < r$ and while applying the column operations to eliminate the entries corresponding to the diagrams coarser than $U^{R^d}_{R^d}$ the entry $x^{r''}$ becomes zero by Lemma 3.20 in \cite{KP}.

\NI \textbf{Case (ii):} If the entry corresponding the product $U^{R^d}_{R^d} \cdot U^{R^{d'}}_{R^{d'}}$ in the Gram matrix $G_s$ is $0$ mod $\lambda'$ and it remains zero even after applying column operations to eliminate the entries corresponding to the diagrams coarser than $U^{R^d}_{R^d}$ by Lemma 3.19 and Lemma 3.23 in \cite{KP}.

Rearranging the diagrams $U^{R^d}_{R^d} \in \mathbb{J}_s^r$ in such a way that

\centerline{$A'_{r, r} = \underset{R^d}{\prod} A^{s+r, s}_{R^d}$}
\NI where $A^{s+r, s}_{R^d}$ is a symmetric diagram matrix of size ${_{(s+r)}}C_{s}$ with entries $\{X_0, X_1, \cdots, X_{\text{min}\{s, r\}}\}$ and the diagrams corresponding to the entries of the symmetric diagram matrix $A^{s+r, s}$ belong to $\Omega^{s+r, r}_{R^d}.$

\NI Also, the by Theorem \ref{T2.17}(c) $X_{\text{min}\{s, r\}-t} = (-1)^t \ t! \ \underset{m=t}{\overset{r-1}{\prod}} [x-(s+m)].$

\NI By theorem \ref{T3.9}, the eigenvalues of the symmetric diagram matrix $A^{s+r, s}_{R^d}$ of size ${_{(s+r)}}C_{s}$ are given by

\centerline{$ \underset{t=0}{\overset{\text{min}\{s, r\}}{\sum}} \left[ \underset{j=0}{\overset{l}{\sum}} (-1)^j \ {_l}C_j \ {_{(s-l)}}C_{(t-j)} \ {_{(r-l)}}C_{(t-j)}\right] X_{\text{min}\{s, r\}-t}$}

\NI for all $0 \leq l \leq \text{min}\{s, r\}.$

\NI Thus, the eigenvalues of the block submatrix $A'_{r, r}$ which is a direct product of symmetric diagram matrices $A^{s+r, s}_{R^d}$ are given by

\centerline{$ \underset{t=0}{\overset{\text{min}\{s, r\}}{\sum}} \left[ \underset{j=0}{\overset{l}{\sum}} (-1)^j \ {_l}C_j \ {_{(s-l)}}C_{(t-j)} \ {_{(r-l)}}C_{(t-j)}\right] X_{\text{min}\{s, r\}-t}$}

\NI for all $0 \leq l \leq \text{min}\{s, r\}.$

\NI Therefore, the eigenvalues of the Gram matrix $G_s$ are given by

\centerline{$ \underset{t=0}{\overset{\text{min}\{s, r\}}{\sum}} \left[ \underset{j=0}{\overset{l}{\sum}} (-1)^j \ {_l}C_j \ {_{(s-l)}}C_{(t-j)} \ {_{(r-l)}}C_{(t-j)}\right] X_{\text{min}\{s, r\}-t}$}

\NI for all $0 \leq l \leq \text{min}\{s, r\}$ and $0 \leq r \leq k-s.$

\NI \textbf{proof of (ii):} The proof of (ii) follows by  comparing the coefficients on both the sides.

\end{proof}

\subsection{Eigenvalues of Gram Matrices for Signed Partition Algebras and the algebra of $\mathbb{Z}_2$-relations:}
\textbf{\\}

\begin{thm}\label{T4.2}
\begin{enumerate}
\item[(i)] The set of all distinct eigenvalues of Gram matrices $G_{2s_1+s_2}$ of the algebra of $\mathbb{Z}_2$-relations with entries $\{X_0, X_1, \cdots, X_{\text{min}\{s_1, r_1\}}\}$ and $\{X'_0, X'_1, \cdots X'_{\text{min}\{s_2, r_2\}}\}$ are given by
\begin{enumerate}
\item[(a)] $ \underset{t=0}{\overset{\text{min}\{s_1, r_1\}}{\sum}} \left[ \underset{j=0}{\overset{l}{\sum}} (-1)^j \ {_l}C_j \ {_{(s_1-l)}}C_{(t-j)} \ {_{(r_1-l)}}C_{(t-j)}\right] X_{\text{min}\{s_1, r_1\}-t}$

\NI for all $0 \leq l \leq \text{min}\{s_1, r_1\}$ and $0 \leq r_1 \leq k-s_1-s_2$

\NI where $X_{\text{min}\{s_1, r_1\}-t} = (-1)^t \ t! \ 2^t \ \underset{i=t}{\overset{r_1-1}{\prod}}[x^2 - x-2(s_1+i)], k , s_1$ and $s_2$ are fixed integers

\item[(b)] $ \underset{t=0}{\overset{\text{min}\{s_2, r_2\}}{\sum}} \left[ \underset{j=0}{\overset{m}{\sum}} (-1)^j \ {_l}C_j \ {_{(s_2-l)}}C_{(t-j)} \ {_{(r_2-l)}}C_{(t-j)}\right] X'_{\text{min}\{s_2, r_2\}-t}$

\NI for all $0 \leq l \leq \text{min}\{s_2, r_2\}$ and $0 \leq r_2 \leq k-s_1-s_2$

\NI where $X_{\text{min}\{s_2, r_2\}-t} = (-1)^t \ t! \ \underset{m=i}{\overset{r-1}{\prod}} [x-(s+l)], k, s_1$ and $s_2$ are fixed integers.
\end{enumerate}
\item[(ii)] The set of all distinct eigenvalues of block submatrices $\left(\widetilde{A}_{2r_1+r_2, 2r_1+r_2}\right)_{\substack{0 \leq 2r_1+r_2 \leq 2k-2s_1-2s_2-1 \\ 0 \leq r_1, r_2 \leq k-s_1-s_2-1}}$ of the Gram matrix $\widetilde{G}_{2s_1+s_2}$ of signed partition algebra with entries $\{X_0, X_1, \cdots, X_{\text{min}\{s_1, r_1\}}\}$ and $\{X'_0, X'_1, \cdots X'_{\text{min}\{s_2, r_2\}}\}$ are given by
\begin{enumerate}
\item[(a)] $ \underset{t=0}{\overset{\text{min}\{s_1, r_1\}}{\sum}} \left[ \underset{j=0}{\overset{l}{\sum}} (-1)^j \ {_l}C_j \ {_{(s_1-l)}}C_{(t-j)} \ {_{(r_1-l)}}C_{(t-j)}\right] X_{\text{min}\{s_1, r_1\}-t}$

\NI for all $0 \leq l \leq \text{min}\{s_1, r_1\}$ and $0 \leq r_1 \leq k-s_1-s_2$

\NI where $X_{\text{min}\{s_1, r_1\}-t} = (-1)^t \ t! \ 2^t \ \underset{i=t}{\overset{r_1-1}{\prod}}[x^2 - x-2(s_1+i)], k , s_1$ and $s_2$ are fixed integers

\item[(b)] $ \underset{t=0}{\overset{\text{min}\{s_2, r_2\}}{\sum}} \left[ \underset{j=0}{\overset{m}{\sum}} (-1)^j \ {_l}C_j \ {_{(s_2-l)}}C_{(t-j)} \ {_{(r_2-l)}}C_{(t-j)}\right] X'_{\text{min}\{s_2, r_2\}-t}$

\NI for all $0 \leq l \leq \text{min}\{s_2, r_2\}$ and $0 \leq r_2 \leq k-s_1-s_2-1,$

\NI where $X_{\text{min}\{s_2, r_2\}-t} = (-1)^t \ t! \ \underset{m=i}{\overset{r-1}{\prod}} [x-(s+l)], k, s_1$ and $s_2$ are fixed integers.
\end{enumerate}
\end{enumerate}
\end{thm}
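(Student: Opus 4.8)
The plan is to adapt to the two--colour setting the argument used for Theorem \ref{T4.1}. Exactly as there, one starts from a diagram $R^{d^+}$ representing the underlying set partition of a fixed type $\lambda'$, and builds all diagrams $U^{(d,P)}_{(d,P)}$ (resp. $U^{(\widetilde d,\widetilde P)}_{(\widetilde d,\widetilde P)}$) of the prescribed propagating number by choosing which connected components become through classes. The new feature is that the connected components now carry a colour, $\{e\}$ or $\mathbb Z_2$, and the selections for the two colours are independent: among the $s_1+r_1$ many $\{e\}$--components one picks $s_1$ to be $\{e\}$--through classes, and among the $s_2+r_2$ many $\mathbb Z_2$--components one picks $s_2$ to be $\mathbb Z_2$--through classes. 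Grouping the diagrams by the type $\lambda'$ of $R^{d^+}$, the relevant principal block of the Gram matrix decomposes as a direct sum over $\lambda'$, so it suffices to handle one $\lambda'$ at a time.

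First I would invoke Theorem \ref{T2.17}. For the algebra of $\mathbb Z_2$--relations, Theorem \ref{T2.17}(a) replaces each block $A_{2r_1+r_2,2r_1+r_2}$ by $A'_{2r_1+r_2,2r_1+r_2}$, whose entries are described in Theorem \ref{T2.17}(a)(i),(ii); for the signed partition algebra, Theorem \ref{T2.17}(b) does the same for the blocks $\widetilde A_{2r_1+r_2,2r_1+r_2}$ with $0\le 2r_1+r_2\le 2k-2s_1-2s_2-1$ — the exceptional block $\widetilde A'_{\lambda'}$ being excluded, which is precisely why part (ii) of the present theorem is stated only for those blocks. Using Notation \ref{N2.15} and Proposition \ref{P2.16}, each nonzero $ij$--entry of $A'_{2r_1+r_2,2r_1+r_2}$ is a product of an $\{e\}$--part built from the quadratic polynomials $x^2-x-2(s_1+j)$ with prefactor $(-1)^{t_1}\,t_1!\,2^{t_1}$, and a $\mathbb Z_2$--part built from the linear polynomials $x-(s_2+m)$ with prefactor $(-1)^{t_2}\,t_2!$, where $t_1$ (resp. $t_2$) counts the $\{e\}$-- (resp. $\mathbb Z_2$--) through classes of the subdiagram $U^{l^1_f}_{l^1_f}$. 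After a suitable reordering of the diagrams this exhibits
\[
A'_{2r_1+r_2,\,2r_1+r_2}\;=\;\prod_{R^{d^+}}\Bigl(A^{s_1+r_1,\,s_1}_{R^{d^+}}\otimes A^{s_2+r_2,\,s_2}_{R^{d^+}}\Bigr),
\]
where $A^{s_1+r_1,s_1}_{R^{d^+}}$ is the symmetric diagram matrix of Definition \ref{D3.5} in the variables $X_{\min\{s_1,r_1\}-t}=(-1)^t\,t!\,2^t\prod_{i=t}^{r_1-1}[x^2-x-2(s_1+i)]$, and $A^{s_2+r_2,s_2}_{R^{d^+}}$ is the symmetric diagram matrix in the variables $X'_{\min\{s_2,r_2\}-t}=(-1)^t\,t!\prod_{m=t}^{r_2-1}[x-(s_2+m)]$, with the cases $r_1=0$ or $r_2=0$ degenerating in the obvious way.

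Then I would apply the Main Theorem (Theorem \ref{T3.9}) to each tensor factor: its eigenvalues are, for the first factor, the expressions in (i)(a) (resp. (ii)(a)) as $l$ runs over $0\le l\le\min\{s_1,r_1\}$, and for the second factor, the expressions in (i)(b) (resp. (ii)(b)) as $l$ runs over $0\le l\le\min\{s_2,r_2\}$. Since the spectrum of a Kronecker product is the set of products of the spectra, and since passing to $G'_{2s_1+s_2}$ (resp. to the listed blocks of $\widetilde G'_{2s_1+s_2}$) only reorganises the matrix into these blocks together with the entries that vanish by Theorem \ref{T2.17}, the set of all distinct eigenvalues is obtained, as $r_1,r_2$ range over the admissible values, by pairing a quantity from family (a) with one from family (b); this yields the asserted description. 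If one wants the integrality/closed--form consequence for part (ii) analogous to Theorem \ref{T4.1}(ii), it follows by comparing coefficients exactly as there.

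The main obstacle is the displayed factorisation. One has to check that the parameter $f$ of Definition \ref{D3.5} — the number of horizontal edges of one diagram replaced by through classes in the other — splits additively and independently along the two colours as $f=f_1+f_2$, so that the matrix of $\{e\}$--choices and the matrix of $\mathbb Z_2$--choices genuinely tensor; and that the prefactors produced by Proposition \ref{P2.16} distribute as $(-1)^{t_1+t_2}\,2^{t_1}\,t_1!\,t_2!=\bigl((-1)^{t_1}2^{t_1}t_1!\bigr)\bigl((-1)^{t_2}t_2!\bigr)$ across the two factors, so that they match the variables $X_i$ and $X'_i$ respectively. Verifying that the reordering of diagrams realising this Kronecker form is compatible with the column--then--row reduction of Theorem \ref{T2.17}, and keeping careful track of the index ranges (the strict bound $r_2<k-s_1-s_2$ on the $\mathbb Z_2$--colour, and the exclusion of $\widetilde A'_{\lambda'}$ in the signed case), is the only delicate bookkeeping; once it is done, Theorem \ref{T3.9} completes the argument.
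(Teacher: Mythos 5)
Your proposal follows essentially the same route as the paper's own proof: decompose each block $A'_{2r_1+r_2,\,2r_1+r_2}$ (resp. $\widetilde A'_{2r_1+r_2,\,2r_1+r_2}$), via Theorem \ref{T2.17}, Notation \ref{N2.15} and Proposition \ref{P2.16}, into a product over the underlying diagrams of symmetric diagram matrices $A^{s_1+r_1,s_1}\otimes A^{s_2+r_2,s_2}$ in the variables $X_i$ and $X'_i$, one tensor factor per colour, and then apply Theorem \ref{T3.9} to each factor. If anything, your explicit observation that the spectrum of the Kronecker product consists of the pairwise products of the two families (a) and (b) is more precise than the paper, which merely lists the two families separately in both the statement and the proof.
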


\begin{proof}
Since the $\{e\}$-connected components  cannot be replaced by $\mathbb{Z}_2$-connected components, we can rearrange the diagrams such that the block matrices $A'_{2r_1+r_2, 2r_1+r_2} \left( \widetilde{A}'_{2r_1+r_2, 2r_1+r_2}\right)$ becomes the tensor product of matrices say $A'_{2r_1, 2r_1} \otimes A'_{r_2, r_2} \left(\widetilde{A}'_{2r_1, 2r_1} \otimes \widetilde{A}'_{r_2, r_2} \right)$ respectively.

\NI Let $U^{(d, P)}_{(d, P)} \in \mathbb{J}^{2r_1+r_2}_{2s_1+s_2} \left( U^{(\widetilde{d}, \widetilde{P})}_{(\widetilde{d}, \widetilde{P})} \in \widetilde{\mathbb{J}}^{2r_1+r_2}_{2s_1+s_2}\right)$  such that $\phi((d, P)) = \lambda \left( \widetilde{\phi}((\widetilde{d}, \widetilde{P})) = \lambda\right)$ where $\lambda \in \Omega_{s_1, s_2}^{r_1, r_2}$  is as in Definition \ref{D2.11},  $\mathbb{J}^{2r_1+r_2}_{2s_1+s_2}$ and $\widetilde{\mathbb{J}}^{2r_1+r_2}_{2s_1+s_2}$ are as in Notation \ref{N2.12}.

We shall draw a diagram  using the diagram $d \left( \widetilde{d}) \right)$ with $2s_1+s_2$ through classes respectively as follows.
\begin{enumerate}
  \item[(i)] Draw $d \left( \widetilde{d}\right)$ denoted by $d^+$ in the top row and a copy of $d^+ \left( \widetilde{d}^+\right)$ denoted by $d^- \left( \widetilde{d}^-\right)$ in the bottom row.
  \item[(ii)] Fix the positions of $\mathbb{Z}_2$-connected component and among the $ s_1 +  r_1$ number of pairs $\{e\}$-connected components in the top row choose $s_1$ pairs of $\{e\}$ connected components and join each pair of $\{e\}$-connected component in the top row with the respective pair of $\{e\}$-connected component in the bottom row by vertical edges.

      Similarly, Fix the positions of $\{e\}$-connected components and among the $ s_2 +  r_2$ number of pairs $\mathbb{Z}_2$-connected components in the top row choose $s_2$ number of $\mathbb{Z}_2$-connected components and join each  $\mathbb{Z}_2$-connected component in the top row with the respective $\mathbb{Z}_2$-connected component in the bottom row by vertical edges.
  \item[(iii)] Denote the collection of  diagrams obtained by fixing $\mathbb{Z}_2$-connected components  as $\Omega^{s_1+r_1, s_1}_{d^{\{e\}}} \left( \widetilde{\Omega}^{s_1+r_1, s_1}_{\widetilde{d}^{\{e\}}}\right)$ and denote the collection of  diagrams obtained by fixing $\{e\}$-connected components  as $\Omega^{s_2+r_2, s_2}_{d^{\mathbb{Z}_2}} \\ \left( \widetilde{\Omega}^{s_1+r_1, s_1}_{\widetilde{d}^{\mathbb{Z}_2}}\right).$
\end{enumerate}

\NI It is clear that the number of such diagrams with $2s_1+s_2$ through classes is ${_{(s_1+r_1)}}C_{s_1} \ {_{(s_2+r_2)}}C{s_2}$ and $\mathbb{J}^{2r_1+r_2}_{2s_1+s_2} = \underset{d}{\cup} \ \Omega^{s_1+r_1, s_1}_{d^{\{e\}}} \times \Omega^{s_2+r_2, s_2}_{d^{\mathbb{Z}_2}} \left( \widetilde{\mathbb{J}}^{2r_1+r_2}_{2s_1+s_2} = \underset{\widetilde{\widetilde{d}}}{\cup} \ \widetilde{\Omega}^{s_1+r_1, s_1}_{d^{\{e\}}} \times \widetilde{\Omega}^{s_2+r_2, s_2}_{d^{\mathbb{Z}_2}} \right)$ where $\mathbb{J}^{2r_1+r_2}_{2s_1+s_2}$ and $\widetilde{\mathbb{J}}^{2r_1+r_2}_{2s_1+s_2}$  are as in Notation \ref{N2.12}.


Suppose $U^{(d', P')}_{(d', P')} \in \mathbb{J}^{2r'_1+r'_2}_{2s_1+s_2}$ then the entry corresponding to the product  $U^{(d, P)}_{(d, P)} \cdot U^{(d', P')}_{(d', P')}$ in the Gram matrix is either $ x^{2r''_1+r''_2}$ with $r''_1 < r_1$ and $r''_2 < r_2$ or $ 0$ mod $\lambda.$

\NI \textbf{Case (i):} If the entry corresponding to the product $U^{(d, P)}_{(d, P)} \cdot U^{(d', P')}_{(d', P')}$ is $ x^{2r''_1+r''_2}$ with $r''_1 < r_1$ and $r''_2 < r_2$ then it becomes zero after applying column operations to eliminate the entries corresponding to the diagrams coarser than $U^{(d, P)}_{(d, P)}$ by Lemma 3.20 in \cite{KP}.

\NI \textbf{Case (ii):} If the entry corresponding to the product $U^{(d, P)}_{(d, P)} \cdot U^{(d', P')}_{(d', P')}$ is $ 0$ mod $\lambda$ then it remains zero even after applying the column operations to eliminate the entries corresponding to the diagrams coarser than $U^{(d, P)}_{(d, P)}$  by Lemma 3.19 and Lemma 3.23 in \cite{KP}.

Rearranging the diagrams $U^{(d, P)}_{(d, P)} \in \mathbb{J}_{2s_1+s_2}^{2r_1+r_2}$ in such a way that

\centerline{$A'_{2r_1, 2r_1} = \underset{d}{\prod} A^{s_1+r_1, s_1}_{d}$ and $A'_{r_2, r_2} =  \underset{d}{\prod} A^{s_2+r_2, s_2}_{d}$}
\NI where $A^{s_1+r_1, s_1}_{d}$ and $A^{s_2+r_2, s_2}_{d}$ are symmetric diagram matrices of size ${_{(s_1+r_1)}}C_{s_1}$ and ${_{(s_2+r_2)}}C_{s_2}$ respectively. The  entries of the symmetric diagram matrices $A^{s_1+r_1, s_1}_{d}$ and $A^{s_2+r_2, s_2}_{d}$ are $\{X_0, X_1, \cdots, X_{\text{min}\{s_1, r_1\}}\}$ and $\{X'_0, X'_1, \cdots, X'_{\text{min}\{s_2, r_2\}}\}$ respectively and the diagrams corresponding to the entries of the symmetric diagram matrices $A^{s_1+r_1, s_1}_{d}$ and $A^{s_2+r_2, s_2}_{d}$ belong to $\Omega^{s_1+r_1, s_1}_{d^{\{e\}}}$ and $\Omega^{s_2+r_2, s_2}_{d^{\mathbb{Z}_2}}$ respectively.

Similarly, we can rearrange the diagrams in signed partition algebra in such a way that

\centerline{$\widetilde{A}'_{2r_1, 2r_1} = \underset{\widetilde{d}}{\prod} \widetilde{A}^{s_1+r_1, s_1}_{\widetilde{d}}$ and $\widetilde{A}'{r_2, r_2} =  \underset{\widetilde{d}}{\prod} A^{s_2+r_2, s_2}_{\widetilde{d}}$}
\NI where $\widetilde{A}^{s_1+r_1, s_1}_{\widetilde{d}}$ and $\widetilde{A}^{s_2+r_2, s_2}_{\widetilde{d}}$ are symmetric diagram matrices of size ${_{(s_1+r_1)}}C_{s_1}$ and ${_{(s_2+r_2)}}C_{s_2}$ respectively. The  entries of the symmetric diagram matrices $\widetilde{A}^{s_1+r_1, s_1}_{\widetilde{d}}$ and $\widetilde{A}^{s_2+r_2, s_2}_{\widetilde{d}}$ are $\{X_0, X_1, \cdots, X_{\text{min}\{s_1, r_1\}}\}$ and $\{X'_0, X'_1, \cdots, X'_{\text{min}\{s_2, r_2\}}\}$ respectively and the diagrams corresponding to the entries of the symmetric diagram matrices $\widetilde{A}^{s_1+r_1, s_1}_{\widetilde{d}}$ and $\widetilde{A}^{s_2+r_2, s_2}_{\widetilde{d}}$ belong to $\widetilde{\Omega}^{s_1+r_1, s_1}_{\widetilde{d}^{\{e\}}}$ and $\widetilde{\Omega}^{s_2+r_2, s_2}_{\widetilde{d}^{\mathbb{Z}_2}}$ respectively.

\NI Also, the by Theorem \ref{T2.17}(a) $X_{\text{min}\{s_1, r_1\}-t} = (-1)^t \ t! \ 2^t \ \underset{m=t}{\overset{r_1-1}{\prod}} [x^2-x-2(s_1+m)]$ and $X'_{\text{min}\{s_2, r_2\}-t} = (-1)^t \ t!  \ \underset{m=t}{\overset{r_2-1}{\prod}} [x-(s_2+m)].$

\NI By theorem \ref{T3.9}, the eigenvalues of the symmetric diagram matrix $A^{s_1+r_1, s_1}_{d}\left( \widetilde{A}^{s_1+r_1, s_1}_{d} \right)$ of size ${_{(s_1+r_1)}}C_{s_1}$ are given by

 $ \underset{t=0}{\overset{\text{min}\{s_1, r_1\}}{\sum}} \left[ \underset{j=0}{\overset{l}{\sum}} (-1)^j \ {_l}C_j \ {_{(s_1-l)}}C_{(t-j)} \ {_{(r_1-l)}}C_{(t-j)}\right] X_{\text{min}\{s_1, r_1\}-t}$

\NI for all $0 \leq l \leq \text{min}\{s_1, r_1\}.$

\NI By theorem \ref{T3.9}, the eigenvalues of the symmetric diagram matrix $A^{s_2+r_2, s_2}_{d}\left( \widetilde{A}^{s_2+r_2, s_2}_{d}\right)$ of size ${_{(s_2+r_2)}}C_{s_2}$ are given by

 $ \underset{t=0}{\overset{\text{min}\{s_2, r_2\}}{\sum}} \left[ \underset{j=0}{\overset{m}{\sum}} (-1)^j \ {_l}C_j \ {_{(s_2-l)}}C_{(t-j)} \ {_{(r_2-l)}}C_{(t-j)}\right] X'_{\text{min}\{s_2, r_2\}-t}$

\NI for all $0 \leq l \leq \text{min}\{s_2, r_2\}.$

\NI Thus, the eigenvalues of the block submatrix $A'_{2r_1, 2r_1} \left( \widetilde{A}'_{2r_1, 2r_1}\right)$ which is a direct product of symmetric diagram matrices $A^{s_1+r_1, s_1}_{d} \left( \widetilde{A}^{s_1+r_1, s_1}_{\widetilde{d}}\right)$ are given by

 $ \underset{t=0}{\overset{\text{min}\{s_1, r_1\}}{\sum}} \left[ \underset{j=0}{\overset{l}{\sum}} (-1)^j \ {_l}C_j \ {_{(s_1-l)}}C_{(t-j)} \ {_{(r_1-l)}}C_{(t-j)}\right] X_{\text{min}\{s_1, r_1\}-t}$

\NI for all $0 \leq l \leq \text{min}\{s_1, r_1\}.$

\NI Thus, the eigenvalues of the block submatrix $A'_{r_2, r_2}\left( \widetilde{A}'_{r_2, r_2}\right)$ which is a direct product of symmetric diagram matrices $A^{s_2+r_2, s_2}_{d} \left( \widetilde{A}^{s_2+r_2, s_2}_{d}\right)$ are given by

 $ \underset{t=0}{\overset{\text{min}\{s_2, r_2\}}{\sum}} \left[ \underset{j=0}{\overset{l}{\sum}} (-1)^j \ {_l}C_j \ {_{(s_2-l)}}C_{(t-j)} \ {_{(r_2-l)}}C_{(t-j)}\right] X'_{\text{min}\{s_2, r_2\}-t}$

\NI for all $0 \leq l \leq \text{min}\{s_2, r_2\}.$

\NI Therefore, the eigenvalues of the Gram matrix $G_{2s_1+s_2}$ of the algebra of $\mathbb{Z}_2$-relations are given by

\begin{enumerate}
\item[(a)] $ \underset{t=0}{\overset{\text{min}\{s_1, r_1\}}{\sum}} \left[ \underset{j=0}{\overset{l}{\sum}} (-1)^j \ {_l}C_j \ {_{(s_1-l)}}C_{(t-j)} \ {_{(r_1-l)}}C_{(t-j)}\right] X_{\text{min}\{s_1, r_1\}-t}$

\NI for all $0 \leq l \leq \text{min}\{s_1, r_1\}$ and $0 \leq r_1 \leq k-s_1-s_2$

\NI where $X_{\text{min}\{s_1, r_1\}-t} = (-1)^t \ t! \ 2^t \ \underset{i=t}{\overset{r_1-1}{\prod}}[x^2 - x-2(s_1+i)], k , s_1$ and $s_2$ are fixed integers

\item[(b)] $ \underset{t=0}{\overset{\text{min}\{s_2, r_2\}}{\sum}} \left[ \underset{j=0}{\overset{m}{\sum}} (-1)^j \ {_l}C_j \ {_{(s_2-l)}}C_{(t-j)} \ {_{(r_2-l)}}C_{(t-j)}\right] X'_{\text{min}\{s_2, r_2\}-t}$

\NI for all $0 \leq l \leq \text{min}\{s_2, r_2\}$ and $0 \leq r_2 \leq k-s_1-s_2$

\NI where $X_{\text{min}\{s_2, r_2\}-t} = (-1)^t \ t! \ \underset{m=i}{\overset{r-1}{\prod}} [x-(s+l)], k, s_1$ and $s_2$ are fixed integers.
\end{enumerate}






\end{proof}
\section{\textbf{Appendix}}

To compute the eigenvalues of the symmetric diagram matrix $A^{7, 4}$ of size $35$, we need the eigenvalues of the symmetric diagram matrices $A^{2, 1}$ and $A^{5, 3}$ of sizes $3$ and $10$ respectively. First, we shall compute the eigenvalues of the symmetric diagram matrix $A^{2, 1}$ of size $3.$

\begin{center}
$A^{2, 1} = \left(
              \begin{array}{ccc}
                x_1 & x_0 & x_0 \\
                x_0 & x_1 & x_0 \\
                x_0 & x_0 & x_1 \\
              \end{array}
            \right)
$
\end{center}

\NI \textbf{Step 1:} Apply the following row and column operations on $A^{2,1}:$

\centerline{$R_1 \leftrightarrow R_1 - R_2, C_2 \leftrightarrow C_2 + C_1$ and $R_2 \leftrightarrow R_2 - R_3, C_3 \leftrightarrow C_2 + C_3$}

then the reduced matrix is as follows:

\begin{center}
$\left(
   \begin{array}{ccc}
     x_1-x_0 & 0 & 0 \\
     0 & x_1-x_0 & 0 \\
     x_0 & 2x_0 & x_1+2x_0 \\
   \end{array}
 \right)
$
\end{center}

\NI Thus, the eigenvalues of the symmetric diagram matrix $A^{2, 1}$ of size $3$ are $x_1-x_0, x_1+2x_0.$

\NI Secondly, we shall compute the eigenvalues of the symmetric diagram matrix $A^{5, 3}$ of size $10.$

\begin{center}
$A^{5, 3} = \left(
              \begin{array}{cccccccccc}
                x_2 & x_1 & x_1 & x_1 & x_0 & x_0 & x_1 & x_1 & x_0 & x_1 \\
                x_1 & x_2 & x_1 & x_0 & x_1 & x_0 & x_1 & x_0 & x_1 & x_1 \\
                x_1 & x_1 & x_2 & x_0 & x_0 & x_1 & x_0 & x_1 & x_1 & x_1 \\
                x_1 & x_0 & x_0 & x_2 & x_1 & x_1 & x_1 & x_1 & x_0 & x_1 \\
                x_0 & x_1 & x_0 & x_1 & x_2 & x_1 & x_1 & x_0 & x_1 & x_1 \\
                x_0 & x_0 & x_1 & x_1 & x_1 & x_2 & x_0 & x_1 & x_1 & x_1 \\
                x_1 & x_1 & x_0 & x_1 & x_1 & x_0 & x_2 & x_1 & x_1 & x_0 \\
                x_1 & x_0 & x_1 & x_1 & x_0 & x_1 & x_1 & x_2 & x_1 & x_0 \\
                x_0 & x_1 & x_1 & x_0 & x_1 & x_1 & x_1 & x_1 & x_2 & x_0 \\
                x_1 & x_1 & x_1 & x_1 & x_1 & x_1 & x_0 & x_0 & x_0 & x_2 \\
              \end{array}
            \right)
$
\end{center}

\NI \textbf{Step 1:} apply the following row and column operations on the symmetric diagram matrix $A^{5, 3}.$

\centerline{$R_i \leftrightarrow R_i - R_{3+i}$ and $C_{3+i} \leftrightarrow C_{3+i} + C_i \ \ $ for $1 \leq i \leq 3$}

\NI then the reduced matrix is as follows:

\begin{center}
$\left(
   \begin{array}{cc}
     A_{1, 1} & 0 \\
     \ast & A_{1, 2} \\
   \end{array}
 \right)
$
\end{center}

\NI where $A_{1, 1} = \left(
                        \begin{array}{ccc}
                          y_1 & y_0 & y_0 \\
                          y_0 & y_1 & y_0 \\
                          y_0 & y_0 & y_1 \\
                        \end{array}
                      \right)
$ is a symmetric diagram matrix of size $3$ with $y_1 = x_2 - x_1$ and $y_0 = x_1 - x_0.$ Using induction the eigenvalues of the symmetric diagram matrix $A_{1, 1}$ are $y_1 - y_0$ and $y_1+2y_0.$ i.e., $x_2-2x_1+x_0$ and $x_2+x_1-2x_0$ respectively.

\NI \textbf{Step 2:} Apply the following row and column operations on $A_{1, 2}.$

\centerline{$R_4 \leftrightarrow R_4 - R_6, R_7 \leftrightarrow R_7 - R_9$ and $C_6 \leftrightarrow C_6 + C_4, C_9 \leftrightarrow C_9 + C_7$}

\NI By interchanging the rows and columns of the reduced matrix suitably, we get

\begin{center}
$\left(
  \begin{array}{cc}
    A_{2, 1} & 0 \\
    \ast & A_{2, 2} \\
  \end{array}
\right)$
\end{center}

\NI where $A_{2, 1} = \left(
                        \begin{array}{cc}
                          x_2-x_0 & x_1-x_0 \\
                         2x_1-2x_0  & x_2-x_1 \\
                        \end{array}
                      \right)
$ is a submatrix of the symmetric diagram matrix $A_{1, 1}$ obtained in Step 1. Therefore, the eigenvalues of $A_{2, 1}$ are same as the eigenvalues of $A_{1, 1}.$

\NI \textbf{Step 3:} Apply the following row and column operations on $A_{2, 2}.$

\centerline{$R_5 \leftrightarrow R_5 - R_6, R_8 \leftrightarrow R_8 - R_9$ and $C_6 \leftrightarrow C_6 + C_5, C_9 \leftrightarrow C_9 + C_8$}

\NI By interchanging the rows and columns of the reduced matrix suitably, we get

\begin{center}
$\left(
  \begin{array}{cc}
    A_{3, 1} & 0 \\
    \ast & A_{3, 2} \\
  \end{array}
\right)$
\end{center}

\NI where $A_{3, 2} = \left(
                        \begin{array}{ccc}
                          x_2+3x_1+2x_0 & 2x_1+x_0 & x_1 \\
                          4x_1+2x_0 & x_2+2x_1 & x_0 \\
                          6x_1 & 3x_0 & x_2 \\
                        \end{array}
                      \right)
$, $A_{3, 1} $ is same as $A_{2, 1}$ which is a submatrix of the symmetric diagram matrix $A_{1, 1}$ obtained in Step 1. Therefore, the eigenvalues of $A_{3, 1}$ are same as the eigenvalues of $A_{1, 1}.$

\NI \textbf{Step 4:}Apply the following row and column operation on $A_{3, 2}.$

\centerline{$C_1 \leftrightarrow C_1+C_2+C_3, R_2 \leftrightarrow R_2 - R_1$ and $R_3 \leftrightarrow R_3 - R_1$}

The reduced matrix is as follows:

\begin{center}
$\left(
   \begin{array}{cc}
     A' & \ast \\
     0 & A'' \\
   \end{array}
 \right)
$
\end{center}

\NI where $A'$ is a $1 \times 1$ matrix whose entry is sum of the entries of the symmetric diagram matrix $A^{5, 3}$ i.e., $x_2+6x_1+3x_0$ and $A'' = \left(
                                       \begin{array}{cc}
                                         x_2-x_0 & x_0-x_1 \\
                                        2x_0-2x_1 & x_2-x_1 \\
                                       \end{array}
                                     \right)
$ which is again  a submatrix of the symmetric diagram matrix $A_{1, 1}$ obtained in Step 1. Therefore, the eigenvalues of $A''$ are same as the eigenvalues of $A_{1, 1}.$

\NI Therefore, the eigenvalues of the symmetric diagram matrix $A^{5, 3}$ are $x_2+6x_1+3x_0, x_2-2x_1+x_0$ and $x_2+x_1-2x_0.$

\NI Now, we shall compute the eigenvalues of the symmetric diagram matrix $A^{7, 4}$ using induction.

The following are the diagrams in $\Omega^{s+r, s}$ when $s=4$ and $r = 3:$

\begin{center}
\includegraphics{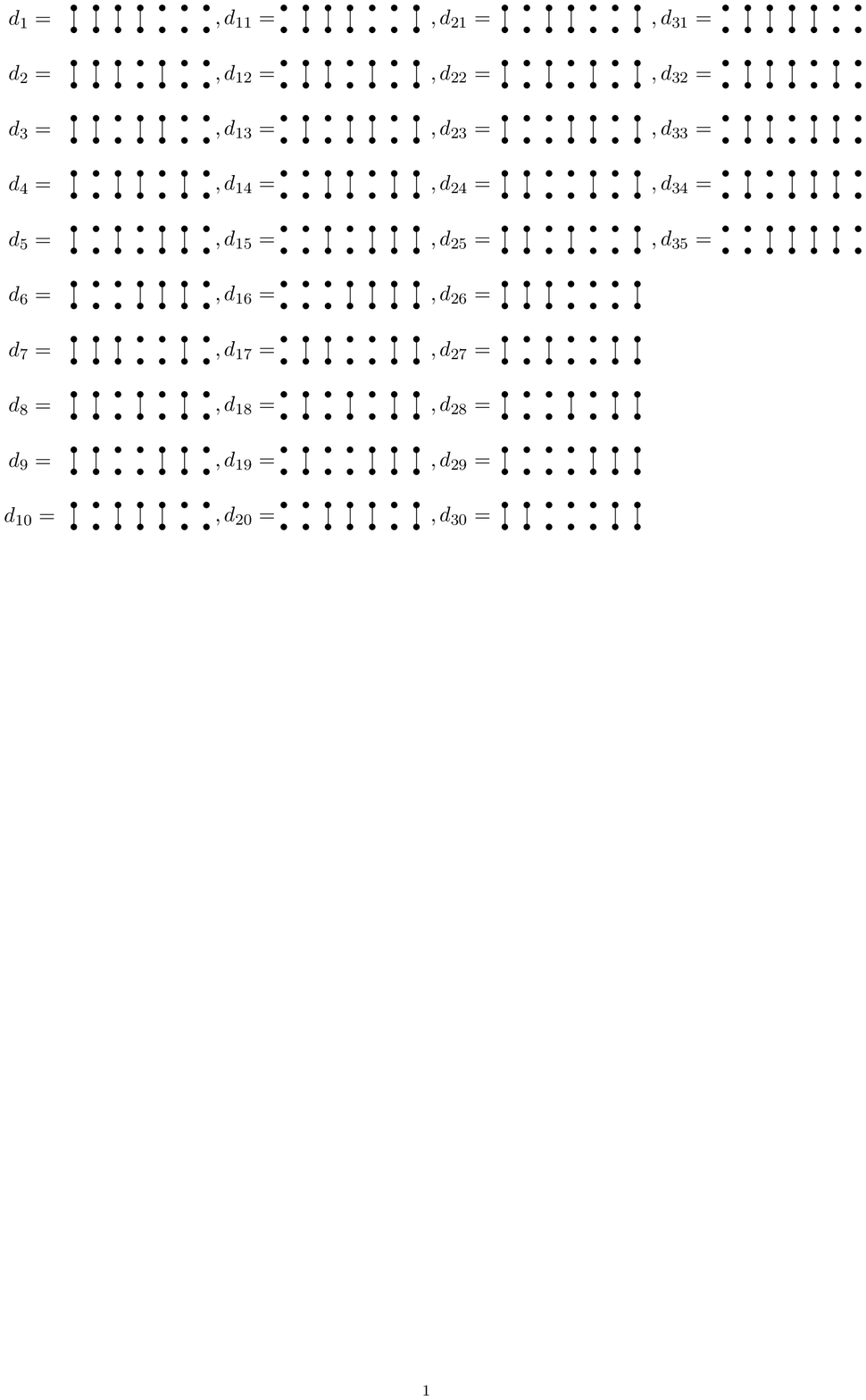}
\end{center}

\NI \textbf{Step 1:} Applying the following row and column operations on the symmetric diagram matrix $A^{4+3, 4}:$

\centerline{$R_{d_i} \leftrightarrow R_{d_i} - R_{d_{10+i}} \ \ \ C_{d_{10+i}} \leftrightarrow C_{d_{10+i}} + C_i \ \ \ 1 \leq i \leq 10$}
\NI where $d_i, 1 \leq i \leq 10 $ belongs to $\Omega_{\{1\}, \{7\}}$ and $d_{10+i}, 1 \leq i \leq 10$ belongs to $\Omega_{\{7\},, \{1\}}.$

The reduced matrix is as follows:

\begin{center}
$\left(
   \begin{array}{cc}
     A_{1, 1} & 0 \\
     \ast & A_{1, 2} \\
   \end{array}
 \right)
$
\end{center}

\NI where $A_{1, 1}$ is a symmetric diagram matrix of size $10$ and

$\hspace{-1cm}A_{1, 1} = \left(
              \begin{array}{cccccccccc}
                x_3 - x_2 & x_2-x_1 & x_2-x_1 & x_2-x_1 & x_1-x_0 & x_1-x_0 & x_2-x_1 & x_2-x_1 & x_1-x_0 & x_2-x_1 \\
                x_2-x_1 & x_3-x_2 & x_2-x_1 & x_1-x_0 & x_2-x_1 & x_1-x_0 & x_2-x_1 & x_1-x_0  & x_2-x_1 & x_2-x_1 \\
                x_2-x_1 & x_2-x_1 & x_3-x_2 & x_1-x_0 & x_1-x_0 & x_2-x_1 & x_1-x_0 & x_2-x_1 & x_2-x_1 & x_2-x_1 \\
                x_2-x_1 & x_1-x_0 & x_1-x_0 & x_3-x_2 & x_2-x_1 & x_2-x_1 & x_2-x_1 & x_2-x_1 & x_1-x_0 & x_2-x_1 \\
                x_1-x_0 & x_2-x_1 & x_1-x_0 & x_2-x_1 & x_3-x_2 & x_2-x_1 & x_2-x_1 & x_1-x_0 & x_2-x_1 & x_2-x_1 \\
                x_1-x_0 & x_1-x_0 & x_2-x_1 & x_2-x_1 & x_2-x_1 & x_3-x_2 & x_1-x_0 & x_2-x_1 & x_2-x_1 & x_2-x_1 \\
                x_2-x_1 & x_2-x_1 & x_1-x_0 & x_2-x_1 & x_2-x_1 & x_1-x_0 & x_3-x_2 & x_2-x_1 & x_2-x_1 & x_1-x_0 \\
                x_2-x_1 & x_1-x_0 & x_2-x_1 & x_2-x_1 & x_1-x_0 & x_2-x_1 & x_2-x_1 & x_3-x_2 & x_2-x_1 & x_1-x_0  \\
                x_1-x_0 & x_2-x_1 & x_2-x_1 & x_1-x_0 & x_2-x_1 & x_2-x_1 & x_2-x_1 & x_2-x_1 & x_3-x_2 & x_1-x_0 \\
                x_2-x_1 & x_2-x_1 & x_2-x_1 & x_2-x_1 & x_2-x_1 & x_2-x_1 & x_1-x_0 & x_1-x_0 & x_1-x_0 & x_3-x_2 \\
              \end{array}
            \right).
$

\NI Using induction, the eigenvalues of the symmetric diagram matrix $A_{1, 1}$ are $x_3+5x_2-3x_1-3x_0, x_3-3x_2+3x_1-x_0$ and $x_3-3x_1+2x_0.$

\NI \textbf{Step 2:} The diagrams corresponding to the entries of the matrix $A_{1, 2}$ belong to $\Omega_{I_i, J_i}$ where $I_1 = \{7\}, I_2 = \{1, 7\}, I_3 = \{ \ \}$ and $J_1 = \{1\}, \{ \ \}, \{1, 7\}$ respectively. Apply the following row and column operations on $A_{1, 2}:$

$R_d \leftrightarrow R_d - R_{d^{\ast}} \ \ \ \ \forall d \in \Omega_{I_i \cup \{2\}, J_i \cup \{6\}}$ and

$C_{d^{\ast}} \leftrightarrow C_{d^{\ast}} + C_d \ \ \ \ \forall d^{\ast} \in \Omega_{I_i \cup \{6\}, J_i \cup \{ 2\}} \ \ \forall 1 \leq i \leq 3.$

\NI By interchanging the rows and columns suitably the reduced matrix is as follows:
\begin{center}
$ \left(
    \begin{array}{cc}
      A_{2, 1} & 0 \\
      \ast &  A_{2, 2}\\
    \end{array}
  \right)
$
\end{center}
\NI where $A_{2, 1} = \left(
              \begin{array}{ccccccc}
                x_3-x_1 & x_2-x_0 & x_2-x_0 & x_1-x_0 & x_2-x_1 & x_2-x_1 & x_2-x_1 \\
                x_2-x_0 & x_3-x_1 & x_2-x_0 & x_2-x_1 & x_1-x_0 & x_2-x_1 & x_2-x_1 \\
                x_2-x_0 & x_2-x_0 & x_3-x_1 & x_2-x_1 & x_2-x_1 & x_1-x_0 & x_2-x_1 \\
                2x_1-2x_0 & 2x_2-2x_1 & 2x_2-2x_1 & x_3-x_2 & x_2-x_1 & x_2-x_1 & x_1-x_0 \\
                2x_2-2x_1 & 2x_1-2x_0 & 2x_2-2x_1 & x_2-x_1 & x_3-x_2 & x_2-x_1 & x_1-x_0 \\
                2x_2-2x_1 & 2x_2-2x_1 & 2x_1-2x_0 & x_2-x_1 & x_2-x_1 & x_3-x_2 & x_1-x_0 \\
                2x_2-2x_1 & 2x_2-2x_1 & 2x_2-2x_1 & x_1-x_0 & x_1-x_0 & x_1-x_0 & x_3-x_2 \\
              \end{array}
            \right)
$ is same as $B_{1, 2}$ a submatrix of $A_{1, 1}$ obtained after applying the row and column operations.Thus, the eigenvalues of the matrix $A_{2, 1}$ are same as the eigenvalues of matrix $A_{1, 1}.$

\NI \textbf{Step 3:} The diagrams corresponding to the entries of the matrix $A_{2,2}$ belong to $\Omega_{I_i, J_i}$ where $I_1 = \{6, 7\}, I_2 = \{1, 6, 7\}, I_3 = \{ 6 \}, I_4 =\{2, 6, 7\}, I_5 = \{1, 2, 6, 7\}, I_6 = \{2, 6\}, I_7 = \{7\}, I_8= \{1, 7\}, I_9 = \{ \ \}$ and $J_1 = \{1, 2\}, J_2 = \{ 2 \}, J_3 = \{1, 2, 7\}, J_4=\{1\}, J_5 = \{ \ \}, J_6=\{1, 7\}, J_7 = \{1, 2, 6\}, J_8 = \{2,6\}, J_9 = \{1,2,6,7\}$ respectively. Apply the following row and column operations on $A_{2, 1}:$

$R_d \leftrightarrow R_d - R_{d^{\ast}} \ \ \ \ \forall d \in \Omega_{I_i \cup \{3\}, J_i \cup \{5\}}$ and

$C_{d^{\ast}} \leftrightarrow C_{d^{\ast}} + C_d \ \ \ \ \forall d^{\ast} \in \Omega_{I_i \cup \{5\}, J_i \cup \{ 3\}} \ \ \forall 1 \leq i \leq 3^2.$

By interchanging the rows and columns suitably the reduced matrix is as follows:

\begin{center}
$ \left(
    \begin{array}{cc}
      A_{3, 1} & 0 \\
      \ast &  A_{3, 2}\\
    \end{array}
  \right)
$
\end{center}
\NI where $A_{3, 1} = \left(
              \begin{array}{ccccc}
                x_3+x_2-x_1-x_0 & x_2-x_0 & x_2-x_1 & x_2-x_0 & x_2-x_1 \\
                2x_2-2x_0 & x_3-x_1 & x_1-x_0 & 2x_2-2x_1 & x_2-x_1 \\
                4x_2-4x_1 & 2x_1-2x_0 & x_3-x_2 & 2x_2-2x_1 & x_1-x_0 \\
                2x_2-2x_0 & 2x_2-2x_1 & x_2-x_1 & x_3-x_1 & x_1-x_0 \\
                4x_2-4x_1 & 2x_2-2x_1 & x_1-x_0 & 2x_1-2x_0 & x_3-x_2 \\
              \end{array}
            \right)
$ is same as $B_{2, 2}$ obtained from $B_{1, 2}$ after applying row and column operations.Thus, the eigenvalues of the matrix $A_{3, 1}$ are same as the eigenvalues of matrix $A_{1, 1}.$

\NI \textbf{Step 4:} The diagrams corresponding to the entries of the matrix $A_{3,2}$ belong to $\Omega_{I_i, J_i}$ where $1 \leq i \leq 3^3.$ Apply the following row and column operations on $A_{3, 1}:$

$R_d \leftrightarrow R_d - R_{d^{\ast}} \ \ \ \ \forall d \in \Omega_{I_i \cup \{3\}, J_i \cup \{4\}}$ and $\forall d \in \Omega_{I_i \cup \{4\}, J_i \cup \{5\}}$

$C_{d^{\ast}} \leftrightarrow C_{d^{\ast}} + C_d \ \ \ \ \forall d \in \Omega_{I_i \cup \{4\}, J_i \cup \{3\}}$ and $\forall d \in \Omega_{I_i \cup \{5\}, J_i \cup \{4\}} \ \ \forall 1 \leq i \leq 3^3.$

By interchanging the rows and columns suitably the reduced matrix is as follows:

\begin{center}
$ \left(
    \begin{array}{cc}
      A_{4, 1} & 0 \\
      \ast &  A_{4, 2}\\
    \end{array}
  \right)
$
\end{center}
\NI where $A_{4, 1}$ is same as $A_{3, 1}.$ Thus, the eigenvalues of the matrix $A_{4, 1}$ are same as the eigenvalues of matrix $A_{1, 1}.$

\NI \textbf{Step 5:} Apply the following row and column operations on $A_{4, 2}:$

\begin{enumerate}
  \item[(i)] Fix a $d \in \Omega_{\{7\}, \{1\}} \setminus \underset{i=1}{\overset{3^4}{\cup}} \Omega_{I_i, J_i}.$

      \centerline{$C_d \leftrightarrow C_d + \underset{d'}{\sum} C_{d'}$}

  where $d' \in \Omega_{\{7\}, \{1\}} \setminus \underset{i=1}{\overset{3^4}{\cup}} \Omega_{I_i, J_i}.$

  \centerline{$R_{d'} \leftrightarrow R_{d'} - R_d$ for all $d' \in \Omega_{\{7\}, \{1\}} \setminus \underset{i=1}{\overset{3^4}{\cup}} \Omega_{I_i, J_i}.$}

  \item[(ii)] Fix a $d \in \Omega_{\{1, 7\}, \{ \ \}} \setminus \underset{i=1}{\overset{3^4}{\cup}} \Omega_{I_i, J_i}.$

      \centerline{$C_d \leftrightarrow C_d + \underset{d'}{\sum} C_{d'}$}

  where $d' \in \Omega_{\{1,7\}, \{ \ \}} \setminus \underset{i=1}{\overset{3^4}{\cup}} \Omega_{I_i, J_i}.$

  \centerline{$R_{d'} \leftrightarrow R_{d'} - R_d$ for all $d' \in \Omega_{\{1, 7\}, \{ \ \}} \setminus \underset{i=1}{\overset{3^4}{\cup}} \Omega_{I_i, J_i}.$}

  \item[(iii)]   Fix a $d \in \Omega_{\{ \ \}, \{1, 7\}} \setminus \underset{i=1}{\overset{3^4}{\cup}} \Omega_{I_i, J_i}.$

      \centerline{$C_d \leftrightarrow C_d + \underset{d'}{\sum} C_{d'}$}

  where $d' \in \Omega_{\{ \ \}, \{1, 7\}} \setminus \underset{i=1}{\overset{3^4}{\cup}} \Omega_{I_i, J_i}.$

  \centerline{$R_{d'} \leftrightarrow R_{d'} - R_d$ for all $d' \in \Omega_{\{ \ \}, \{1, 7\}} \setminus \underset{i=1}{\overset{3^4}{\cup}} \Omega_{I_i, J_i}.$}

\end{enumerate}

interchanging the rows and columns suitably, the reduced matrix is as follows:

\begin{center}
$ \left(
    \begin{array}{cc}
      A_{5, 1} & 0 \\
      \ast &  A_{5, 2}\\
    \end{array}
  \right)
$
\end{center}
\NI where

$A_{5, 1} = \left(
              \begin{array}{ccc}
                x_3+7x_2+9x_1+3x_0 & 3x_2+6x_1+x_0 & 2x_2+3x_1 \\
                6x_2+12x_1+2x_0 & x_3+6x_2+3x_1 & 3x_1+2x_0 \\
                8x_2+12x_1 & 6x_1+4x_0 & x_3+4x_2 \\
              \end{array}
            \right)
$ and $A_{5, 2}$ is same as $A_{3, 1}.$ Thus, the eigenvalues of the matrix $A_{5, 2}$ are same as the eigenvalues of matrix $A_{1, 1}.$

\NI \textbf{Step 6:} apply the following row and column operations on $A_{5, 1}:$

\centerline{$C_1 \leftrightarrow C_1 + C_2 + C_3, R_2 \leftrightarrow R_2 - R_1$ and $R_3 \leftrightarrow R_3 - R_1$}

then the reduced matrix is as follows:

\begin{center}
$\left(
   \begin{array}{cc}
     A' & \ast \\
     0 & A'' \\
   \end{array}
 \right)
$
\end{center}

\NI where $A'$ is a $1 \times 1$ matrix whose entry is sum of the entries of the symmetric diagram matrix $A^{7, 4}$ i.e., $x_3 + 12x_2+ 18x_1 + 4x_0$ and $A'' = \left(
                                                   \begin{array}{cc}
                                                     x_3+3x_2-3x_1-x_0 & 2x_0-2x_2 \\
                                                     3x_0-3x_2 & x_3+2x_2-3x_1 \\
                                                   \end{array}
                                                 \right).
$
We perform the following row and column operation on $A'':$

\centerline{$R_1 \leftrightarrow R_1 + R_2$ and $C_2 \leftrightarrow C_2 - C_1$}

then the reduced matrix is as follows:

\begin{center}
$\left(
   \begin{array}{cc}
     x_3-3x_1+2x_0 & 0 \\
     3x_0-3x_2 & x_3 +5x_2-3x_1-3x_0 \\
   \end{array}
 \right).
$
\end{center}

\NI Thus, the eigenvalues of the matrix $A''$ are $x_3-3x_1+2x_0$ and $x_3+5x_2-3x_1-3x_0.$

\NI Therefore, the eigenvalues of the symmetric diagram matrix $A^{7, 4}$ are $x_3 + 12x_2+ 18x_1 + 4x_0, x_3+5x_2-3x_1-3x_0, x_3-3x_2+3x_1-x_0$ and $x_3-3x_1+2x_0.$

\end{document}